\theoremstyle{thmstyleone}%
\newtheorem{theorem}{Theorem}
\newtheorem{proposition}[theorem]{Proposition}%
\newtheorem{lemma}[theorem]{Lemma}
\newtheorem{conjecture}{Conjecture}
\theoremstyle{thmstyletwo}%
\newtheorem{remark}{Remark}%
\theoremstyle{thmstylethree}%
\def\N{\mathbb{N}}
\def\R{\mathbb{R}}
\begin{document}

\title[Controllability of Lotka -- Volterra strongly competitive systems]{Controllability of diffusive Lotka -- Volterra strongly competitive systems under boundary constrained controls}


\author*[1]{\fnm{Elisa} \sur{Affili}}\email{elisa.affili@univ-rouen.fr}

\author[2,3,4]{\fnm{Enrique} \sur{Zuazua}}\email{enrique.zuazua@fau.de}
\equalcont{Orcid ID: (Elisa Affili) https://orcid.org/0000-0002-0634-7158, 
(Enrique Zuazua) https://orcid.org/0000-0002-1377-0958}

\affil*[1]{\orgdiv{Univ Rouen Normandie, CNRS, Normandie Univ, LMRS UMR 6085},  \orgaddress{\postcode{F-76000}, \city{Rouen}, \country{France}}}

\affil[2]{\orgdiv{Chair for Dynamics, Control, Machine Learning, and Numerics (Alexander von Humboldt Professorship), Department of Mathematics, Friedrich–Alexander-Universit\"at Erlangen–N\"urnberg, 91058 Erlangen, Germany}}

\affil[3]{\orgdiv{Departamento de Matem\'aticas, Universidad Aut\'onoma de Madrid, 28049 Madrid, Spain}}

\affil[4]{\orgdiv{Chair of Computational Mathematics, Universidad de Deusto. Av. de las Universidades, 24, 48007 Bilbao, Basque Country, Spain}}


\abstract{We investigate the controllability of the competition-diffusion Lot\-ka\---Vol\-ter\-ra system. Our primary focus is on the one-dimensional setting with Dirichlet boundary controls, interpreted as ecological management policies regulating the density of species at the habitat boundaries and satisfying bilateral constraints.

We show that the system can be steered from any initial state to a constant steady state representing the extinction of the less competitive species. In contrast, we prove that controllability toward a steady state where the more competitive species vanishes is generally not achievable when the inter-species competition rates are too unbalanced. This obstruction is due to the existence of barrier solutions, which we explicitly construct based on the spectral properties of the associated reaction -- diffusion operators.

Our theoretical results are illustrated through numerical simulations and are accompanied by a discussion of open problems and potential directions for future research.
}

\keywords{ Lotka-Volterra system, asymptotic controllability, boundary controls, constrained controls, barrier solutions, models for ecology}


\pacs[MSC Classification]{35K57,93B05,35G60,92D40}

\maketitle

\section{Problem formulation}

In this paper, we explore the possibility of steering the diffusive dynamics of two competing populations within a confined domain -- a prototypical ecological model -- through boundary controls that emulate practical interventions, such as animal reintroduction, hunting, or migration policies. These controls aim to manage species movement and regulate population densities across territorial boundaries.

A central challenge lies in ensuring that population densities comply with bilateral constraints, which are essential for maintaining biologically meaningful conditions. Designing control strategies under these constraints is particularly demanding, as they take the form of state and control limitations. In practice, this means that both the population states and the applied control inputs must remain within prescribed bounds that reflect realistic ecological conditions. Addressing this issue calls for the development of tailored control methodologies capable of operating effectively within such constraints.

More precisely, we study a competition-diffusion Lotka -- Volterra system involving two interacting populations. 
Each equation in the system captures the spatial dynamics, reproduction, and mortality of a population, with inter-species coupling arising from competition for limited natural resources.

For clarity of exposition and simplicity in the calculations, we focus on the one-dimensional setting. However, the core ideas and techniques extend naturally to higher dimensions; particularly, the extension to radially symmetric domains is straightforward. 
Our analysis is based on Dirichlet boundary controls, which are classical in control theory. 
Nevertheless, as is customary, the controllability results can be reinterpreted under different types of boundary conditions -- a point we revisit later in this section.

Let $y_1(x,t)$ and $y_2(x,t)$ denote the population densities of the two species, defined on the spatial interval $(0, L)$, with $L > 0$, and, considered over an infinite time horizon $t > 0$. The system under study is given by
\begin{equation}\label{sys:lv}
\left\{
\begin{array}{ll}
\partial_t y_1 -  \partial_{xx}^2 y_1 = y_1(1 - y_1 - a y_2), & \text{for} \ x\in(0,L), \ t>0,\\
\partial_t y_2 - \partial_{xx}^2 y_2 = y_2(1 - b y_1 - y_2), & \text{for} \ x\in(0,L), \ t>0,\\
y_1(x, 0)=y_1^0, \ y_2(x, 0)=y_2^0, & \text{for} \ x\in (0,L),
\end{array}\right.
\end{equation}
where 
\begin{equation}\label{hyp:sc}
a>0 \quad \text{and} \quad b>\max \{a, 1\},	
\end{equation}
are the competition coefficients,  and $y_1^0,y_2^0\in L^\infty((0,L);[0,1])$ are bounded initial data.

The system dynamics is controlled through \emph{boundary controls}
\begin{equation}\label{hyp:spaces}
u_1(x,t), \ u_2(x,t) \in L^{\infty}(\{0,L\}\times \R^+; [0,1])     
\end{equation}
representing the densities of each population at the boundary. Thus, we impose
\begin{equation}\label{sys:lv2}
\left\{
\begin{array}{ll}
y_1(x,t)=u_1(x,t),  & \text{for} \ x=0, L, \ t>0, \\
y_2(x,t)=u_2(x,t), & \text{for} \ x=0,L, \  t>0.
\end{array}\right.
\end{equation}

By assuming \eqref{hyp:sc}, we made two choices:
\begin{itemize}
\item Biologically,  $a < b$ means that  the first population is competitively stronger than the second one. Note that the opposite case $a > b$ is symmetric and leads to analogous results.  The degenerate case $a = b$ is excluded from our analysis and briefly discussed in the perspectives.
	\item	We further assume $b > 1$ to exclude the regime of weak competition, in which the two species can coexist. In this case, the system exhibits qualitatively different behavior; see \cite{sonego2024control} for a detailed discussion.\end{itemize}

Note that, for simplicity, all coefficients in system \eqref{sys:lv} -- except for the competition terms -- are normalized to 1. Remarks on the influence of diffusion coefficients and reproduction rates, which appear as multiplicative factors in the nonlinear terms, are provided in Remark \ref{rmk:d} in the following section.

\medskip

Notice that, for both equations, the state upper bound $y=1$  represents the so-called \textit{carrying capacity}, that is, the maximal density of individuals that the renewable resources of the environment can sustain.

Accordingly, solutions are assumed to satisfy the bilateral constraints $0 \le y \le 1$.  More precisely, \begin{equation*}
0 \leq y_1(x,t), y_2(x,t) \leq 1, \qquad \text{for all} \ x\in(0, L), t>0.
\end{equation*}
For this to hold, initial data and controls  need to fulfil the same constraints as well:
\begin{equation}\label{hyp:bounds}
\begin{split}
	0 \leq y_1^0(x), y_2^0(x) \leq 1, \qquad &\text{for all} \ x\in(0, L),  \\
	0 \leq u_1(x,t), u_2(x,t) \leq 1, \qquad & \ x=0, L, \ t>0.
\end{split}
\end{equation}
The set $[0,1]\times[0,1]$ is invariant under the flow of the competition-diffusion Lotka -- Volterra system \eqref{sys:lv}, under the bilateral constraints on the controls above.
This is due to the monotonicity structure of the system, as we recall in Subsection \ref{ss:preliminars}.

We say that \eqref{sys:lv} is \emph{asymptotically controllable} towards a steady state $(\bar{y}_1(x), \bar y_2(x))$ if for any initial condition $(y_1^0,y_2^0)$ (satisfying \eqref{hyp:bounds}) there exist controls $u_1,u_2\in L^{\infty}(\{0,L\}\times \R^+; [0,1])$
such that
$$(y_1(t,x),y_2(t,x)) \longrightarrow (\bar{y}_1(x), \bar y_2(x))$$
uniformly in $[0, L]$ as $t \to +\infty$.

Our main goal is to investigate the boundary controllability of   the diffusive system \eqref{sys:lv}, under constraints, towards steady state solutions (thus, not depending on $x$ nor $t$). 
Under the parameter assumptions in \eqref{hyp:sc}, in the absence of diffusion, the system becomes  
\begin{equation}\label{sys:odes}
\left\{
\begin{array}{ll}
\dot{w}_1 = w_1(1 - w_1 - a w_2), & \text{for}  \ t>0,\\
\dot{w}_2= w_2(1 - b w_1 - w_2), & \text{for} \ t>0, \\
(w_1,w_2)(0)=(w_1^0,w_2^0),
\end{array}\right.
\end{equation}
where $\dot{w}_1$ and $\dot{w}_2$ denote the time-derivative of $w_1$ and $w_2$ respectively.
It admits the following equilibria (i.e., time-independent solutions):
\begin{itemize}
	\item	$(0,1)$, which is locally attractive, 
	\item	$(1,0)$, which is locally attractive if $a > 1$,
	\item	$(0,0)$, which is  a saddle point if $0 < a \leq 1$ and is repulsive if $a >1$,
	\item	a coexistence equilibrium $(w_1^*, w_2^*)$ appears only when $a > 1$, and it is given by
$$(w_1^*, w_2^*) := \left( \frac{a - 1}{ab - 1}, \frac{b - 1}{ab - 1} \right).$$
This point is a saddle with a one-dimensional stable manifold. 
Notice that for $a<1$, this equilibrium falls outside $[0,1]\times[0,1]$, while for $a=1$ it coincides with $(0,1)$.
\end{itemize}
The equilibria of the dynamical system \eqref{sys:odes} are the only values that the steady states of system \eqref{sys:lv} can take. Thus, we investigate controllability towards the steady states $(1,0)$, $(0,1)$, $(0,0)$, and, whenever $a>1$, $(w_1^*,w_2^*)$.

Studying extinction phenomena -- cases where one or both components approach zero -- is particularly relevant, as it corresponds to the possibility of eradicating an undesired species from the controlled area, such as a harmful insect or an invasive alien species. Invasive alien species represent a major driver of biodiversity loss, as has been emphasized by experts for many years (see \cite{pyvsek2020scientists}). Therefore, their eradication should be considered a priority for the preservation of local ecosystems.

Specifically, we aim to clarify the role played by the competition coefficients and the length of the spatial interval $L$ in determining the possibility of achieving these control properties. We will construct some barrier solutions preventing the controllability to the steady states $(0,1)$ and $(w_1^*, w_2^*)$.
These constructions rely on a deep understanding of the properties of the steady states, as well as on the use of the comparison principle for the Lotka -- Volterra system.

\begin{remark}{ (Neumann control).}
To keep the presentation concise and accessible, we restrict our analysis to Dirichlet boundary controls. However, the results we obtain can also be interpreted under other boundary conditions, such as Neumann controls, where the boundary input prescribes the value of the normal derivative.

In fact, once controllability has been established using Dirichlet controls, the corresponding Neumann boundary controls can be recovered by simply taking the Neumann traces of the solution at $x = 0$ and $x = L$.

 Moreover, when the initial data are even with respect to $L'=L/2$, because of the symmetry of the system, the formulated problem is equivalent to controlling \eqref{sys:lv} on the interval $(0, L')$  and the mixed boundary controls 
\begin{equation*}
\left\{
\begin{array}{ll}
y_1(0,t)=u_1(t),  & \text{for} \ t>0, \\
y_2(0,t)=u_2(t), & \text{for} \  t>0, \\
\partial_x y_1(L',t)= 0,  & \text{for} \ t>0, \\
\partial_x y_2(L',t)= 0,  & \text{for} \ t>0.
\end{array}\right.
\end{equation*}
\end{remark}

\section{Main results}\label{s:main}

\subsection{Asymptotic controllability}
First, we present a series of positive results on the asymptotic controllability as $t \to \infty$ to the steady states $(1,0)$, $(0,1)$ and negative and positive results for the steady state $(0,0)$.
By $B(0,1)$ we denote the basin of attraction of the equilibrium $(0,1)$ for the dynamical system \eqref{sys:odes}; notice that this set is void for $a<1$.

\begin{theorem}\label{thm:strong}
{\it 	Let $y_1^0,y_2^0: (0,L) \to [0, 1] $ be a pair of measurable initial conditions and let $(y_1,y_2)$ be the solution of system \eqref{sys:lv} with initial data $(y_1^0,y_2^0)$, and   boundary conditions as in \eqref{sys:lv2} to be chosen, and with parameters as in \eqref{hyp:sc}. Then:
	\begin{enumerate}
		\item     $(y_1,y_2) \to (1,0)$ as $t\to\infty$ with boundary controls $(u_1,u_2)\equiv (1,0)$;
		\item If $L\leq \pi$,  $(y_1,y_2) \to (0,1)$ as $t\to\infty$ with the boundary controls $(u_1,u_2)\equiv(0,1)$;
		\item  If $a>1$, $(y_1,y_2) \to (0,1)$ as $t\to\infty$, if $(y_1^0(x),y_2^0(x)) \in B(0,1)$ for all $x\in(0,L)$, with the boundary controls $(u_1,u_2)\equiv (0,1)$;
		\item If $L\leq \pi$,   $(y_1,y_2) \to (0,0)$ as $t\to\infty$, with the boundary controls $(u_1,u_2)=(0,0)$;
		\item If $L>\pi$, for all $(y_1^0, y_2^0)\ne (0,0)$, there exists no boundary controls $(u_1,u_2)\in L^{\infty}(\{0,L \}  \times \R^+; [0,1])$ such that  $(y_1,y_2) \to (0,0)$ as $t\to\infty$.
	\end{enumerate}}
\end{theorem}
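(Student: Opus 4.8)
\medskip
\noindent\textit{Proof strategy for item~(5).}
The plan is to argue by contradiction through a spectral obstruction based on the principal Dirichlet eigenpair of $-\partial_{xx}^2$ on $(0,L)$. Suppose that for some $(y_1^0,y_2^0)\ne(0,0)$ satisfying \eqref{hyp:bounds} there exist admissible controls $u_1,u_2\in L^\infty(\{0,L\}\times\R^+;[0,1])$ such that the solution of \eqref{sys:lv}--\eqref{sys:lv2} satisfies $(y_1(\cdot,t),y_2(\cdot,t))\to(0,0)$ uniformly on $[0,L]$ as $t\to+\infty$. Since $L>\pi$, the first Dirichlet eigenvalue $\lambda_1:=(\pi/L)^2$ obeys $\lambda_1<1$; let $\phi_1(x):=\sin(\pi x/L)$, so $\phi_1>0$ on $(0,L)$, $\phi_1(0)=\phi_1(L)=0$, $-\partial_{xx}^2\phi_1=\lambda_1\phi_1$, $\partial_x\phi_1(0)=\pi/L>0$ and $\partial_x\phi_1(L)=-\pi/L<0$. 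Because $\lambda_1<1$ I may fix $\delta>0$ with $\mu:=1-(1+b)\delta>\lambda_1$. As $(y_1^0,y_2^0)\ne(0,0)$, at least one component $y_i^0$, $i\in\{1,2\}$, is not identically zero; since the $i$-th equation of \eqref{sys:lv} is linear in $y_i$ with bounded zero-order coefficient and $y_i\ge0$ by the invariance of $[0,1]^2$ recalled in Subsection~\ref{ss:preliminars}, the strong maximum principle yields $y_i(x,t)>0$ for every $x\in(0,L)$ and $t>0$.

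Then I would project the $i$-th equation onto $\phi_1$: set $m(t):=\int_0^L y_i(x,t)\,\phi_1(x)\,dx\ge0$, which by interior parabolic regularity is absolutely continuous on $(0,\infty)$. Two integrations by parts and the properties of $\phi_1$ give
\[
m'(t)=-\lambda_1\,m(t)+\frac{\pi}{L}\big(y_i(0,t)+y_i(L,t)\big)+\int_0^L f_i(x,t)\,\phi_1(x)\,dx,
\]
where $f_i$ is the reaction term of the $i$-th equation. The boundary contribution is nonnegative because $y_i$ equals the corresponding control there and controls take values in $[0,1]$. By the assumed uniform convergence there is $T>0$ with $0\le y_1,y_2\le\delta$ on $[0,L]\times[T,+\infty)$; since $a<b$ by \eqref{hyp:sc}, on this range $f_i\ge\mu\,y_i$ pointwise, whence $\int_0^L f_i\,\phi_1\,dx\ge\mu\,m(t)$. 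Thus $m'(t)\ge(\mu-\lambda_1)\,m(t)$ for $t\ge T$, and integrating this differential inequality gives $m(t)\ge m(T)\,e^{(\mu-\lambda_1)(t-T)}$. From the strong maximum principle, $y_i(\cdot,T)>0$ on $(0,L)$, so $m(T)>0$ and hence $m(t)\to+\infty$; but $0\le m(t)\le L\,\|y_i(\cdot,t)\|_{L^\infty((0,L))}\to0$ by the assumed uniform convergence -- a contradiction. Equivalently, one can compare $y_i$ on $[0,L]\times[T,+\infty)$ with the explicit subsolution $\underline y(x,t)=\eta\,e^{(\mu-\lambda_1)(t-T)}\sin(\pi x/L)$ via the comparison principle; the projection is preferable as it avoids delicate matching of $\underline y(\cdot,T)$ with $y_i(\cdot,T)$ near $x=0,L$.

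The step I expect to require the most care is technical rather than conceptual: justifying the identity for $m'(t)$ and the sign of the boundary terms when the Dirichlet data are only $L^\infty$ in time. This is handled within the transposition/weak-solution framework recalled in Subsection~\ref{ss:preliminars} -- the solution is smooth in the interior, its Dirichlet trace coincides with the prescribed control, and the identity for $m$ holds in the integrated, absolutely continuous sense -- together with the strong maximum principle, which applies because interior regularity makes the zero-order coefficient locally bounded and $y_i$ a classical solution away from $\{x=0,L\}$. I would also point out that the threshold $L=\pi$ is the natural one: for $L<\pi$ the same projection with vanishing controls and the bound $f_i\le y_i$ gives $m'(t)\le(1-\lambda_1)\,m(t)<0$ whenever $m(t)>0$, forcing exponential decay of the $\phi_1$-mode, in line with the reachability of $(0,0)$ asserted in item~(4).
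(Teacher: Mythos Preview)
Your argument for item~(5) is correct, and it takes a genuinely different route from the paper's proof. The paper argues by contradiction by considering the \emph{sum} $\zeta:=y_1+y_2$, showing that (after a case distinction on the sign of $a+b-2$ and a rescaling) $\zeta$ is a supersolution of the logistic Dirichlet problem \eqref{eq:logistic}; since $L>\pi$, Lemma~\ref{lemma:logistic} furnishes a strictly positive steady state $\Theta$ that attracts all positive solutions, so $\zeta$ cannot tend to $0$.

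Your approach instead isolates a single nonvanishing component $y_i$ and projects it onto the principal Dirichlet eigenfunction $\phi_1$, obtaining a scalar differential inequality $m'\ge(\mu-\lambda_1)m$ once both components are uniformly small. Both arguments exploit the same spectral fact ($\lambda_1=(\pi/L)^2<1$), but yours is more elementary in that it bypasses Lemma~\ref{lemma:logistic} and the existence of $\Theta$, and it yields an explicit exponential growth rate for the obstruction. The paper's route, on the other hand, stays entirely within the comparison-principle framework already set up in Subsection~\ref{ss:preliminars} and treats both components at once via their sum, avoiding the strong maximum principle for a single component. One minor caveat: the transposition/weak-solution framework you invoke for the $m'$ identity with merely $L^\infty$ boundary data is not actually spelled out in Subsection~\ref{ss:preliminars}, so you would need to supply or cite it separately; the paper sidesteps this by working with the comparison principle rather than tested identities.
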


We now provide a series of remarks to clarify and interpret the statements of Theorem \ref{thm:strong}.
\begin{remark}
	In the statements 1-4, we choose the simplest and more natural controls entailing the desired behavior, namely, the time-independent ones. However, some controls that asymptotically take these values lead to the same result. The asymptotic controllability is then a consequence of the asymptotic attractiveness or stability of the system, with the chosen time-independent boundary conditions. This stability properties will be proved relying mainly on the comparison principle. In the proofs, we will sometimes construct some more general controls (see also Remark \ref{rmk:d}). 
\end{remark}

\begin{remark}
	In all the statements, we presented only asymptotic controllability results. The targets $(1,0)$, $(0,1)$ and $(0,0)$, which saturate the constraints, even when they can be reached asymptotically, cannot be reached in finite time, as mentioned above and further explained as follows.
		
The competition-diffusion Lotka -- Volterra systems enjoys the Comparison Principle in the following sense: if we have two solutions $(\underline{y}_1,\underline{y}_2)$ and $(\overline{y}_1,\overline{y}_2)$ such that $\underline{y}_1 \le \overline{y}_1$ and $\underline{y}_2 \ge \overline{y}_2$ on the parabolic boundary $\left( (0,L)\times \{0\} \right) \cup \left( \{0, L\}\times \R^+ \right)$, then it holds that  $\underline{y}_1 \le \overline{y}_1$ and $\underline{y}_2 \ge \overline{y}_2$ in all the cylinder $[0, L]\times \R^+$. Moreover, if equality holds in one point in the interior of  $(0, L)\times \R^+$, then equality must hold for all the points in $(0, L)\times \R^+$ (see also the preliminars in Section \ref{ss:preliminars} and in particular Theorem \ref{thm:cp}). 
	
	Now, it is easy to notice that $(0,1)$ and $(1,0)$ are solutions for the competition-diffusion Lotka -- Volterra system \eqref{sys:lv} with the corresponding boundary conditions, actually coinciding componentwise with the values of the solution itself. Moreover, any other solution $(y_1, y_2)$, regardless what the admissible applied boundary controls are, satisfies $0 \leq y_1, y_2 \leq 1$, as discussed in the introduction. Hence, by the Comparison Principle, the solution $(y_1, y_2)$ cannot coincide with neither $(0,1)$ nor $(1,0)$ in finite time  without coinciding for all times. This means that controllability in finite time cannot be achieved. This is consistent with the need of trajectories to oscillate around the target before reaching it in finite time. 
    
    For the case of $(0,0)$, we can directly apply the Comparison Principle for parabolic equations to each differential equation of \ref{sys:lv}, obtaining that only asymptotic controllability is possible (and not even in all the cases). This was also the case for the monostable and bistable scalar equation in \cite{pouchol2019phase} and \cite{domenec1}.
\end{remark}

\begin{remark}\label{rmk:d}
	The proof of the first statement in Theorem \ref{thm:strong} employs a planar 
 travelling front, that is 
 $(\alpha, \beta)(x,t)=(Y_1, Y_2)(x-ct)$; 
connecting $(0,1)$ (for $t\to -\infty$) to $(1,0)$  (for $t\to +\infty$). In fact, a suitable translation of the travelling wave is ``below'' the solution $(y_1, y_2)$ with boundary controls $(u_1,u_2)=(1,0)$, meaning that $y_1 \geq \alpha$ and $y_2 \leq \beta$ in the parabolic boundary $\left( (0,L)\times \{0\} \right) \cup \left( \{0, L\}\times \R^+ \right)$; hence, by the Comparison Principle, the ordering is preserved in the cylinder $(0,L)\times \R^+$. Since  $(\alpha, \beta)$ converges to $(1,0)$  for $t\to +\infty$, so does the solution $(y_1, y_2)$, meaning that the travelling wave ``pushes'' the solution $(y_1, y_2)$ to $(0,1)$. This travelling front argument is needed to prove global asymptotic stability of $(1,0)$ with suitable boundary data. Once this is proven, it is trivial to see that $(u_1,u_2)=(1,0)$ is a possible choice.

	The existence of a travelling front  for \eqref{sys:lv} under the hypothesis \eqref{hyp:sc}, moving with positive speed from $(0,1)$ to $(1,0)$  was proved in  \cite{guo2013sign}. More precisely, in \cite{guo2013sign} the competition-diffusion Lotka -- Volterra system was considered
	\begin{equation*}
		\left\{
		\begin{array}{ll}
			\partial_t y_1 -  \partial_{xx}^2 y_1 = y_1(1 - y_1 - a y_2), & \text{for} \ x\in\R, \ t>0,\\
			\partial_t y_2 - d\partial_{xx}^2 y_2 = ry_2(1 - b y_1 - y_2), & \text{for} \ x\in\R, \ t>0,
		\end{array}\right.
	\end{equation*}
	where, in addition to the competition coefficients $a$ and $b$ satisfying \eqref{hyp:sc},    the diffusion coefficient $d>0$  and the reproduction rate $r>0$ were chosen for the second population. The authors were able to completely characterize the sign of the speed of the traveling wave in the cases $r=d$ (which is our case, too) and $r=d/4$. Other conditions entailing a positive sign of the speed were given in \cite{guo2019sign, ma2019speed, ma2020bistable}. The first statement of Theorem \ref{thm:strong} continues to hold within these settings.\end{remark}

\begin{remark}\label{rmk:st245}
	In the statements 2, 4 and 5 of Theorem \eqref{thm:strong}, we see that $L=\pi$ is a threshold. 
	In fact, when $L\leq\pi$, there is no nonnegative steady state solution for the logistic equation with null Dirichlet boundary conditions
	\begin{equation}\label{eq:logistic}
		\partial_t \theta-\partial_{xx}^2 \theta=\theta(1-\theta), \quad \theta(0,t)=\theta(L,t)=0,
	\end{equation}
apart from the trivial solution $\theta \equiv 0.$ Therefore all solutions generated by initial data taking values in $[0, 1]$ and null Dirichlet boundary conditions converge to $0$. We then notice that the solutions of the logistic equation \eqref{eq:logistic} are supersolutions for a single equation  of the competition-diffusion Lotka--Volterra system. Then, by the classical Comparison Principle for parabolic equations (see e.g. \cite{protter2012maximum}), since $\theta$ goes to 0, then also $y_1 \leq \theta$ goes to 0. This proves the second statement of Theorem \ref{thm:strong}. For the fourth statement, we compare separately $y_1 \leq \theta$ and $y_2\leq \theta$, and since $\theta$ converges to 0, then both $y_1$ and $y_2$ converge to 0.
	
	However, when $L>\pi$, the logistic equation \eqref{eq:logistic} has a nontrivial steady state solution $\Theta$, and  all solutions generated by positive initial data of the parabolic equation converge to $\Theta$. Now, to prove the fifth statement, we compare the equation satisfied by $y_1+y_2$ with Dirichlet conditions with the logistic equation, and we find that  $y_1+y_2$ also converges to some function larger than $\Theta$. This means that $y_1$ and $y_2$ cannot converge to 0 simultaneously.
	Hence, $(0,0)$ is no longer reachable.
\end{remark}

\begin{remark}
	We now comment on statement 3. For $a< b$, in an unbounded environment, the first species would be dominant and eventually wipe out the second one. 
	Here, we show that for some particular initial data (lying in the green set of Figure \ref{fig:odes}) -- describing the basin of attraction of the equilibrium $(0, 1)$ for the non-diffusive Lotka -- Volterra model, the system can be controlled to the ``unnatural'' equilibrium $(0,1)$. 
	
	This set of initial data is likely not optimal. This is a topic that requires further investigation.

In the following section, we show that not all initial data can be steered to the steady state $(0,1)$. Specifically, for certain combinations of the parameters $a, b$, and the domain length $L$, the boundary controls are insufficient to counterbalance the competitive advantage of the first population. As a result, extinction of the first species cannot be achieved, and the steady state $(0,1)$ is not always attainable.
\end{remark}

\subsection{Existence of barriers and non-controllability results}
The impossibility of reaching some of the state states of the system can be explained through the existence of \emph{barrier solutions} that controlled trajectories may not cross, regardless of the values of the controls within the given constraints. 

We call a \emph{barrier} a solution $(\phi, \psi)$ to the system 
\begin{equation}\label{sys:barrier}
\left\{
\begin{array}{ll}
-  \phi'' = \phi(1 - \phi - a \psi), & \text{for} \ x\in(0,L), \\
- \psi'' =  \psi(1 - b \phi - \psi), & \text{for} \ x\in(0,L), \\
\phi(x)=0, \ \psi(x)=1,  & \text{for} \ x=0, L, \\
0 <\phi(x), \ \psi(x) < 1, & \text{for} \ x\in (0,L).
\end{array}\right.
\end{equation}
Notice that the trivial steady state $(0,1)$ is excluded, meaning that the first population has a positive density on the interval.
So, the steady state with $(0,1)$ boundary condition is not unique, because of the non-linearity. 
Given that in this setting we only control boundary values,  this implies that we are not always able to reach the desired target. 
From the biological interpretation, it means that
the superiority of the first population cannot be compensated by the action of the boundary control.
In fact, even under the most unfavourable boundary conditions -- namely, zero density for the first population and full density for the second -- the first population persists.

Moreover, travelling waves give us important insight on the controllability properties. From (\cite{gardner1982existence, kan1996existence}) we know that, for $a>1$, there exists a travelling front connecting $(w_1^*, w_2^*)$ to $(0,1)$. So, if one is able to control (exactly) to $(w_1^*, w_2^*)$, then one is also able to control asymptotically to $(0,1)$. 
This suggests that if $(0,1)$ is not reachable, then $(w_1^*, w_2^*)$ is not reachable either.
However, since this construction requires a few technical details, we will employ another quicker argument (in Lemma \ref{lemma:barrier}). 
In Figure \ref{fig:waves}, we represent the known travelling fronts connecting equilibria.

Note that, as mentioned above (\cite{sonego2024control}),  when the equilibrium $(w_1^*, w_2^*)$  lies strictly within the interior of the admissible set $(0,1) \times (0,1)$, one might conjecture that if it is asymptotically reachable, then it should also be attainable in finite time via suitable controls that respect the imposed constraints. However, the arguments presented above demonstrate that even asymptotic reachability of $(w_1^*, w_2^*)$  cannot be expected.

\begin{figure}
	\centering
	\includegraphics[width=0.4\linewidth]{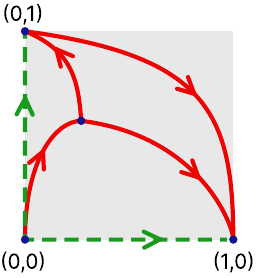}
	\caption{
    This is a heuristic representation meant to visualize the existence of travelling fronts for $a>1$ for the diffusive Lotka -- Volterra system.
    Blue dots represent the constant equilibria, with red arrows denoting the travelling fronts connecting the equilibria, and with green arrows the travelling fronts for the case in which one of the two population is not present (and the remaining population density satisfies a logistic equation). The direction of the arrows points towards the state invading  space with positive speed.}
	\label{fig:waves}
\end{figure}

To state our next results more compactly, we introduce the following set of initial data, that depends on the barrier:
\begin{multline}
    \mathcal{I}_{\phi,\psi} := \{ (y_1^0, y_2^0)\in L^\infty((0,L); [0,1]) \ | \  y_1^0 (x) \geq\phi (x), \\ y_2^0(x) \leq \psi(x) \ \text{ for all} \ x\in [0, {L}]  \}.
\end{multline}

Then we have:

\begin{theorem}\label{thm:implies}
	\emph{Suppose there exists a barrier $(\phi, \psi)$  for some parameters $\bar{a}$, $\bar{b}$, $\bar{L}$. Then, for such parameters, the system \eqref{sys:lv} with boundary controls satisfying \eqref{hyp:spaces}-\eqref{hyp:bounds} is not controllable to $(0,1)$ for any initial data $(y_1^0, y_2^0)\in \mathcal{I}_{\phi,\psi}$. 
	Moreover, if $\bar{a}>1$, the same system is not controllable to the coexistence equilibrium $(w_1^*,w_2^*)$ for any initial data $(y_1^0, y_2^0)\in \mathcal{I}_{\phi,\psi}$.
	}
\end{theorem}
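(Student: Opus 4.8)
The plan is to exploit the Comparison Principle (Theorem~\ref{thm:cp}) with the barrier $(\phi,\psi)$ read as a stationary solution of \eqref{sys:lv} carrying the boundary data $\phi\equiv 0$, $\psi\equiv 1$ on $\{0,L\}$ --- which is the \emph{extreme} admissible boundary data, since $0$ is the least value allowed for $u_1$ and $1$ the greatest for $u_2$. Throughout I use the partial order natural for this competitive (quasi-monotone) system: say $(v_1,v_2)$ dominates $(w_1,w_2)$ when $v_1\ge w_1$ and $v_2\le w_2$.

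First I would show that the barrier lies below every controlled trajectory. Fix admissible controls $u_1,u_2$ as in \eqref{hyp:spaces}-\eqref{hyp:bounds} and any $(y_1^0,y_2^0)\in\mathcal{I}_{\phi,\psi}$, with solution $(y_1,y_2)$. On the lateral parabolic boundary $y_1=u_1\ge 0=\phi$ and $y_2=u_2\le 1=\psi$, while on the initial slice $y_1^0\ge\phi$ and $y_2^0\le\psi$ by the definition of $\mathcal{I}_{\phi,\psi}$. Hence $(\phi,\psi)$ is dominated by $(y_1,y_2)$ on the whole parabolic boundary, and Theorem~\ref{thm:cp} propagates this to the interior:
\begin{equation*}
\phi(x)\le y_1(x,t)\quad\text{and}\quad\psi(x)\ge y_2(x,t)\qquad\text{for all }x\in[0,L],\ t>0.
\end{equation*}
For the target $(0,1)$ this already finishes the argument: with $M:=\max_{[0,L]}\phi>0$, attained at some interior point $\bar x$ since $\phi>0$ on $(0,L)$, one has $\|y_1(\cdot,t)\|_{L^\infty(0,L)}\ge y_1(\bar x,t)\ge M$ for every $t>0$, so $y_1(\cdot,t)$ cannot converge uniformly to $0$; hence no admissible control steers $(y_1^0,y_2^0)\in\mathcal{I}_{\phi,\psi}$ to $(0,1)$.

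For the coexistence state $(w_1^*,w_2^*)$ when $\bar a>1$ I would argue by contradiction: suppose $(y_1,y_2)\to(w_1^*,w_2^*)$ uniformly on $[0,L]$ from some $(y_1^0,y_2^0)\in\mathcal{I}_{\phi,\psi}$. The ordering above still holds, so $y_1\ge\phi$ and $y_2\le\psi$ in the cylinder, and from the two equations of \eqref{sys:lv} this produces the scalar inequalities $\partial_t y_2-\partial_{xx}^2 y_2\le y_2(1-b\phi(x)-y_2)$ and $\partial_t y_1-\partial_{xx}^2 y_1\ge y_1(1-y_1-a\psi(x))$. Passing to the limit $t\to\infty$ along time translations --- using interior parabolic estimates so that $\partial_t y_i$ and $\partial_{xx}^2 y_i$ tend to $0$ --- and invoking the algebraic identities $1-aw_2^*=w_1^*$ and $1-bw_1^*=w_2^*$, one is forced to conclude $\phi(x)\le w_1^*$ and $\psi(x)\ge w_2^*$ on $[0,L]$. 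It then remains to derive a contradiction from the coexistence of a genuine barrier with these two bounds; this is where the extra input of Lemma~\ref{lemma:barrier} enters (a careful analysis of the barrier profile near the endpoints, at its interior extrema, and a spectral comparison with the principal Dirichlet eigenfunction of $-\partial_{xx}^2$ on $(0,L)$). The same conclusion is consistent with the travelling front connecting $(w_1^*,w_2^*)$ to $(0,1)$ for $\bar a>1$ (\cite{gardner1982existence,kan1996existence}, sign of the speed as in \cite{guo2013sign}), whose mere existence already indicates that the value $w_1^*$ is not an upper envelope for the first species under the boundary data $(0,1)$.

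The steps yielding the barrier ordering and the obstruction to $(0,1)$ are immediate consequences of the comparison principle already recorded, and the passage to the limit giving $\phi\le w_1^*$, $\psi\ge w_2^*$ is a routine (if necessary) technical point. The genuine obstacle is the final assertion in the coexistence case --- that no barrier solution can be squeezed beneath $(w_1^*,w_2^*)$ in the sense just described --- which forces one to understand the qualitative structure of barrier solutions rather than to rely on comparison alone; everything else is bookkeeping around Theorem~\ref{thm:cp}.
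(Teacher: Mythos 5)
Your proof is correct and follows essentially the same route as the paper: the Comparison Principle pins every admissible trajectory above the barrier, which immediately rules out convergence to $(0,1)$, and Lemma~\ref{lemma:barrier} rules out $(w_1^*,w_2^*)$ --- your contradiction argument is just the contrapositive of the paper's direct one, and the detour through interior parabolic estimates is unnecessary, since the pointwise bound $\phi(x)\le y_1(x,t)\to w_1^*$ (and $\psi(x)\ge y_2(x,t)\to w_2^*$) already forces $\phi\le w_1^*$ and $\psi\ge w_2^*$. One cosmetic remark: the paper establishes Lemma~\ref{lemma:barrier} not by the spectral/endpoint analysis you sketch in parentheses, but by running the Neumann dynamics from the barrier and invoking the diffusion-induced-extinction theorem of Iida et al.\ (Theorem~\ref{thm:iida1}), so citing the lemma as given is legitimate but your guess at its proof is not how it actually goes.
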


The existence of a barrier solution depends on the parameters of the system. The following two dichotomous theorems analyse the dependence on one parameter, once the other two are fixed. 

The larger the competition coefficient $b$ is, the more difficult it is for the second population to survive. This suggests the following result:

\begin{theorem}\label{thm:b}
		\emph{For any given $a>0$ and $L>\pi$, there exits $b^*>1$ such that:
		\begin{itemize}
			\item  for all $1< b < b^*$, for any initial data $y_1^0, y_2^0 \in L^\infty((0,L);[0,1])$, the system is asymptotically controllable to $(0,1)$ with boundary controls $(u_1,u_2)\equiv(0,1)$.
			\item for all $b> b^*$, there exists a barrier, therefore, for initial data $(y_1^0, y_2^0) \in \mathcal{I}_{\phi,\psi}$, the system is not controllable to $(0,1)$. If moreover $a>1$, for $b\geq b^*$, for initial data $(y_1^0, y_2^0) \in \mathcal{I}_{\phi,\psi}$, the system is not controllable neither to $(w_1^*, w_2^*)$ nor to $(0,1)$.
		\end{itemize} }
\end{theorem}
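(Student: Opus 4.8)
\textbf{Proof plan for Theorem \ref{thm:b}.}

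The plan is to reduce everything to the existence or non-existence of a barrier solution $(\phi,\psi)$ of \eqref{sys:barrier}, and then to exploit monotonicity in the parameter $b$. First I would establish the monotone dependence: if a barrier exists for some $b_0$, then a barrier exists for every $b>b_0$. The idea is that increasing $b$ weakens the second population's equation, so the barrier for $b_0$, or more precisely a suitable sub/supersolution pair built from it, becomes an ordered pair for the $b$-system; one then invokes the comparison principle together with a monotone iteration (between the ordered sub- and supersolutions) to produce a genuine solution of \eqref{sys:barrier} for the larger $b$. This monotonicity lets us define $b^* := \inf\{ b>1 : \text{a barrier exists for } a, L, b\}$, with $b^*=+\infty$ understood if the set is empty; the dichotomy in the statement then concerns whether $b$ lies below or above $b^*$.

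The second step is the non-controllability half: for $b>b^*$ (and $b\ge b^*$ in the limiting case, using that a barrier exists \emph{at} $b^*$ by a closedness/compactness argument on the family of barriers, via elliptic estimates and passage to the limit), a barrier $(\phi,\psi)$ exists, so Theorem \ref{thm:implies} directly yields non-controllability to $(0,1)$ for data in $\mathcal I_{\phi,\psi}$, and non-controllability to $(w_1^*,w_2^*)$ when $a>1$. This part is essentially a citation of the already-proven Theorem \ref{thm:implies}.

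The third step is the positive half: for $1<b<b^*$, no barrier exists, and I must upgrade "no barrier" to "asymptotic controllability to $(0,1)$ with controls $(u_1,u_2)\equiv(0,1)$". The natural route is: with these boundary data the solution is squeezed, by the comparison principle, between the steady state $(0,1)$ from one side and, from the other side, the minimal steady state of \eqref{sys:lv} with $(0,1)$ boundary conditions — this minimal steady state is obtained as the decreasing limit (in $\phi$) / increasing limit (in $\psi$) of a monotone iteration started from $(1,0)$ type data. If this minimal steady state equals $(0,1)$, convergence follows. The absence of a barrier is precisely the statement that $(0,1)$ is the \emph{only} steady state with $(0,1)$ boundary data satisfying $0\le\phi,\psi\le 1$ with $\phi>0$ somewhere is impossible; I would need to also rule out steady states where $\phi$ touches $0$ but is not identically $0$ (strong maximum principle) and handle the ordering of the second component. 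One should also check, as a preliminary, that $b^*>1$ genuinely: for $b$ close to $1$ the system is nearly the weakly-coupled limit and a perturbation/linearization argument around $(0,1)$ using $L>\pi$ (so that the relevant linearized operator still has the right spectral sign) shows no barrier bifurcates off $(0,1)$, hence $b^*>1$.

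The main obstacle I expect is the positive half, specifically proving that "no barrier" forces the monotone iteration to converge all the way down to $(0,1)$ rather than stalling at some intermediate steady state, and ruling out non-negative steady states with a component that vanishes on part of $(0,L)$ without being identically zero; this requires a careful use of the strong maximum principle and the structure of \eqref{sys:barrier}, and possibly a separate argument (e.g. an energy or phase-plane estimate) showing that any steady-state solution with $(0,1)$ boundary data and $\phi\not\equiv 0$ is in fact a barrier in the sense of \eqref{sys:barrier}. The behaviour exactly at $b=b^*$ — asserting a barrier still exists there — is the second delicate point, handled by compactness of the barrier family under elliptic regularity plus a non-degeneracy argument ensuring the limit does not collapse to $(0,1)$.
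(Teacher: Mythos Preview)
Your overall architecture matches the paper's approach: define $b^*$ as the infimum over $b$ admitting a barrier, use monotonicity in $b$ (the paper's Proposition~\ref{prop:mono}) for the dichotomy, invoke Theorem~\ref{thm:implies} for non-controllability above $b^*$, use elliptic compactness (the paper's Proposition~\ref{prop:dependence}) to include $b=b^*$ when $a>1$, and below $b^*$ argue that the non-existence of barriers forces the limit to be $(0,1)$. The paper's treatment of the positive half is in fact simpler than your monotone-iteration scheme: it cites a convergence theorem for competitive parabolic systems (Hirsch) to get convergence to \emph{some} steady state with $(0,1)$ boundary data, then the strong maximum principle identifies it as $(0,1)$; the ``stalling'' and ``touching zero'' issues you flag dissolve in a single application of the strong maximum principle, and no separate energy or phase-plane estimate is needed.

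There is, however, a genuine and substantial gap: you never establish that $b^*<+\infty$. You write ``$b^*=+\infty$ understood if the set is empty'', but the theorem asserts a real threshold and a non-vacuous second bullet. Proving finiteness requires \emph{constructing} a barrier for some large $\bar b$, and this is the technical heart of the paper's proof (Proposition~\ref{thm:bbarrier}, prepared by Lemma~\ref{lemma:psi}). The construction is not automatic: on a slightly smaller interval $(0,R)$ with $\pi<R<L$ one takes $\phi>0$ solving a logistic equation with reduced carrying capacity $1-a(1-\varepsilon)$; then Lemma~\ref{lemma:psi} shows that for $\bar b$ sufficiently large the $\psi$-equation on $(0,R)$ with boundary data $1-\varepsilon$ has a solution strictly below $1-\varepsilon$ inside and with arbitrarily large outward normal derivatives at the endpoints; finally one glues parabolic arcs on the two flanking intervals of total length $L-R$ to obtain a generalised subsolution of \eqref{sys:barrier} on an interval of length $L$, from which a genuine barrier is extracted by monotone iteration. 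Nothing in your plan anticipates this step, and without it the second bullet of the theorem is empty. By contrast, the perturbation argument you propose for $b^*>1$ is a minor point that the paper does not even isolate.
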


Also, when the competition coefficient $a$ is too small, the growth of the first population cannot be controlled by the second population. Hence, we get the following:

\begin{theorem}\label{thm:a}
	\emph{For any given $b>1$ and $L>\pi$, there exits $a^*\geq 1-\frac{\pi^2}{L^2}$ such that:
	\begin{itemize}
		\item  for all $a> a^*$, for any initial data $y_1^0, y_2^0 \in L^\infty((0,L);[0,1])$, the system is asymptotically controllable to $(0,1)$ with controls $(u_1,u_2)\equiv (0,1)$. 
		\item for all $0< a < a^*$, there exists a barrier, therefore, for initial data $(y_1^0, y_2^0) \in \mathcal{I}_{\phi,\psi}$, the system is not controllable to $(0,1)$. If moreover $a^*>1$, for all $1<a\leq a^*$, for initial data $(y_1^0, y_2^0) \in \mathcal{I}_{\phi,\psi}$, the system is not controllable to $(0,1)$ and to $(w_1^*, w_2^*)$.
	\end{itemize} }
\end{theorem}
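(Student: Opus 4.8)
The strategy is to define $a^*:=\sup S$, where $S$ is the set of $a\in(0,b)$ for which the elliptic barrier problem \eqref{sys:barrier} with the prescribed $b$ and $L$ admits a solution, and then to deduce the two bullets from the facts that $S$ is a nonempty interval of the form $(0,a^*)$ or $(0,a^*]$ and that $a^*\geq 1-\pi^2/L^2$. The backbone is a monotonicity lemma: \emph{if a barrier $(\phi_1,\psi_1)$ exists for $(a_1,b,L)$, then a barrier exists for every $a\in(0,a_1]$}. I would prove it by the coupled upper--lower solution method for the competitive (quasi-monotone nonincreasing) elliptic system, using the lower solution $(\underline\phi,\underline\psi)=(\phi_1,0)$ and the upper solution $(\overline\phi,\overline\psi)=(1,\psi_1)$. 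Since $a\leq a_1$ and $\psi_1\geq 0$ one has $-\phi_1''=\phi_1(1-\phi_1-a_1\psi_1)\leq\phi_1(1-\phi_1-a\psi_1)$, so $\phi_1$ is a subsolution of the first equation against $\overline\psi=\psi_1$; the second equation does not involve $a$, so $\psi_1$ solves it identically and is thus a supersolution against $\underline\phi=\phi_1$; and $\overline\phi\equiv1$, $\underline\psi\equiv0$ trivially satisfy the remaining two inequalities. The orderings $\phi_1\leq 1$, $0\leq\psi_1$ and the boundary data are compatible, so the method yields a solution $(\phi,\psi)$ with $\phi_1\leq\phi\leq 1$ and $0\leq\psi\leq\psi_1$; then $\phi\geq\phi_1>0$ and $\psi\leq\psi_1<1$ in $(0,L)$, while $\phi<1$ and $\psi>0$ in $(0,L)$ follow from the strong maximum principle, so $(\phi,\psi)$ is a barrier.

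Next I would establish $S\supseteq(0,\,1-\pi^2/L^2)$; as $L>\pi$ this shows $S\neq\emptyset$ and gives $a^*\geq 1-\pi^2/L^2$. For $a<1-\pi^2/L^2$ I run the same scheme with lower solution $(\underline\phi,\underline\psi)=(\varepsilon\sin(\pi x/L),0)$ and upper solution $(\overline\phi,\overline\psi)=(1,1)$: since $-\underline\phi''=\varepsilon(\pi/L)^2\sin(\pi x/L)$ and $1-\underline\phi-a\geq 1-\varepsilon-a$, any $\varepsilon\in(0,\,1-a-\pi^2/L^2)$ makes $\underline\phi$ a subsolution of the first equation against $\overline\psi=1$, the other three inequalities and the boundary conditions being immediate; the produced solution satisfies $\phi\geq\varepsilon\sin(\pi x/L)>0$ in $(0,L)$, hence (strong maximum principle, as before) is a barrier. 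Finiteness of $a^*$ comes for free: every barrier satisfies $\min_{[0,L]}\psi\geq m(b,L)>0$, obtained by comparing $\psi$ with the solution of $v''=bv$ in $(0,L)$, $v(0)=v(L)=1$; testing the first equation of \eqref{sys:barrier} with $\sin(\pi x/L)$ gives $\int_0^L \phi\,\sin(\pi x/L)\,(1-\phi-a\psi-\pi^2/L^2)\,dx=0$, whose integrand is strictly negative on $(0,L)$ as soon as $a>(1-\pi^2/L^2)/m(b,L)$, which is impossible; hence $a^*\leq (1-\pi^2/L^2)/m(b,L)$.

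With $a^*$ fixed, the dichotomy follows. For $0<a<a^*$, by definition of the supremum there is $a_1\in(a,a^*]$ carrying a barrier, and the monotonicity lemma produces a barrier for $a$; Theorem \ref{thm:implies} then forbids controllability to $(0,1)$ --- and, when $a>1$, also to $(w_1^*,w_2^*)$ --- for all initial data in $\mathcal{I}_{\phi,\psi}$. When $a^*>1$ one must in addition exhibit a barrier at the endpoint $a=a^*$: picking $a_n\uparrow a^*$ for which barriers $(\phi_n,\psi_n)$ exist, boundedness of the right-hand sides (from $0\leq\phi_n,\psi_n\leq 1$) and elliptic estimates give $C^2$-convergence along a subsequence to a solution $(\phi_\infty,\psi_\infty)$ of \eqref{sys:barrier} at $a^*$ with values in $[0,1]$; the only degeneracy to exclude is $\phi_\infty\equiv0$, which would force $\psi_\infty\equiv1$, whereupon rescaling $v_n:=\phi_n/\|\phi_n\|_{L^2}$ and passing to the limit in $-v_n''=v_n(1-\phi_n-a_n\psi_n)$ produces a nonnegative Dirichlet eigenfunction of $-\partial_{xx}^2$ on $(0,L)$ with eigenvalue $1-a^*$, forcing $a^*=1-\pi^2/L^2$ and contradicting $a^*>1$; thus $(\phi_\infty,\psi_\infty)$ is a genuine barrier, and Theorem \ref{thm:implies} applies for every $1<a\leq a^*$. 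For $a>a^*$ no barrier exists, and a short strong-maximum-principle argument shows that the only steady state of \eqref{sys:lv}--\eqref{sys:lv2} with controls $(u_1,u_2)\equiv(0,1)$ and values in $[0,1]^2$ is $(0,1)$ itself; by the competitive comparison principle (Theorem \ref{thm:cp}) the solution of \eqref{sys:lv}--\eqref{sys:lv2} with controls $(0,1)$ and initial datum $(1,0)$ has $y_1$ nonincreasing and $y_2$ nondecreasing in $t$, so it converges --- monotonically, and uniformly in $[0,L]$ by Dini's theorem --- to that unique steady state, i.e. to $(0,1)$; and since any admissible datum is squeezed between $(0,1)$ and $(1,0)$ in the competitive order, its trajectory also tends to $(0,1)$ uniformly, which is the claimed asymptotic controllability with constant controls $(0,1)$.

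The step I expect to be the main obstacle is the endpoint $a=a^*$: one must ensure the limit of barriers does not collapse onto the trivial steady state $(0,1)$, which is exactly what the rescaling argument --- together with the hypothesis $a^*>1$ keeping $a^*$ away from the bifurcation value $1-\pi^2/L^2$ --- secures. Relatedly, in every application of the coupled upper--lower solution method one has to check that the monotone iteration returns a solution with $\phi\not\equiv0$ rather than $(0,1)$; here the strictly positive lower solution ($\phi_1$, respectively $\varepsilon\sin(\pi x/L)$) forces this. The quasi-monotonicity and boundary-compatibility hypotheses underlying the method are routine to verify but deserve care, since the target $(0,1)$ saturates the state constraints.
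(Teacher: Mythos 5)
Your proposal is correct, and its skeleton coincides with the paper's: define $a^*$ as the supremum of the set of $a$ admitting a barrier, show this set contains $\left(0,1-\tfrac{\pi^2}{L^2}\right)$, propagate existence downward in $a$, treat the endpoint $a=a^*$ by compactness when $a^*>1$, invoke Theorem \ref{thm:implies} for the negative direction, and characterize the unique steady state with boundary data $(0,1)$ for the positive direction. Where you differ is in how the constituent lemmas are proved, and in each case your route is more elementary or more self-contained. For monotonicity in $a$ (the paper's Proposition \ref{prop:mono}, part 3) the paper evolves the parabolic system from the old barrier with controls $(0,1)$ and invokes the convergence theorem of Smillie/Hirsch, whereas you run the elliptic coupled sub/supersolution iteration with $(\underline\phi,\underline\psi)=(\phi_1,0)$, $(\overline\phi,\overline\psi)=(1,\psi_1)$; your quasi-monotonicity checks are correct and the strictly positive lower barrier forces nondegeneracy. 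For small $a$ the paper rescales the logistic steady state $\Theta_\ell$ while you use $\varepsilon\sin(\pi x/L)$ against $\overline\psi\equiv 1$; these are essentially equivalent. The most substantive divergence is at the endpoint: the paper's Proposition \ref{prop:dependence} excludes collapse of the barriers onto $(0,1)$ via Lemma \ref{lemma:barrier}, which rests on the diffusion-induced-extinction theorem of Iida et al., whereas your normalization $v_n=\phi_n/\|\phi_n\|_{L^2}$ and passage to the Dirichlet eigenvalue problem shows collapse can only occur at the bifurcation value $1-\pi^2/L^2<1$, contradicting $a^*>1$ -- a cleaner, purely spectral argument that avoids the external citation. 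Finally, for $a>a^*$ the paper again cites Hirsch's convergence theorem for monotone semiflows, while you obtain convergence by time-monotonicity of the trajectory issued from the extremal datum $(1,0)$ (via the time-shift comparison) followed by squeezing; this too is self-contained and correct.

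Two minor remarks. Your separate proof that $a^*<+\infty$ is superfluous, since $a^*\le b$ by the definition of the admissible set (the paper does not bother with it either), though the test-function bound $a^*\le (1-\pi^2/L^2)/m(b,L)$ is a nice quantitative byproduct. And in the sub/supersolution constructions you should state explicitly the strong-maximum-principle step showing that the produced solution satisfies $\psi<1$ and $\psi>0$, $\phi<1$ in the open interval (not merely $\phi>0$), since the definition of a barrier in \eqref{sys:barrier} requires the strict bilateral inequalities; the argument is routine (consider $w=1-\psi$, which satisfies $-w''+\psi w=b\phi\psi\ge 0$ with $w\not\equiv 0$) but it is part of verifying that the limit of the iteration is genuinely a barrier.
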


\begin{remark}
	We also observe that non-controllability to $(0,1)$ can only occur when $L >\pi$. In fact, when $L\leq \pi$, the system can be controlled to $(0,1)$ from every admissible initial datum, as stated in point 2 of Theorem \ref{thm:strong} and explained in Remark \ref{rmk:st245}. In this case, barriers cannot exist.
\end{remark}

\begin{remark}
    We cannot always guarantee that $a^*>1$. However, due to Theorem \ref{thm:b}, we know that it is possible to have non-controllability for any $a>1$, provided that $b$ is large enough. 
\end{remark}

\section{State of the art}
 When considering the competition-diffusion Lotka--Volterra system on the real line and in the absence of controls, several distinct asymptotic behaviors may arise: \textit{extinction}, when one species vanishes asymptotically uniformly (i.e., it converges to $0$); \textit{spreading}, when a species converges to the carrying capacity of the environment ($\equiv 1$ in our setting); and \textit{coexistence}, when neither species undergoes extinction.

It is known (see \cite{hirsch1983differential, hirsch1988systems}) that all solutions to \eqref{sys:lv} with time-independent boundary conditions converge, as time tends to infinity, to a steady state--i.e., a time-independent solution of \eqref{sys:lv}. Homogeneous steady states (i.e., independent also of the spatial variable $x$) are particular examples of such solutions.

However, the asymptotic behavior of the competition-diffusion Lotka--Volterra system \eqref{sys:lv} in bounded domains remains poorly understood, even under classical boundary conditions such as Neumann ones, because of the possible multiple steady states sharing the same boundary conditions. For results concerning convergence to constant steady states, when initial data lies in the basin of attraction (in the sense of the associated ODE system \eqref{sys:odes}), we refer to \cite{ninomiya1997separatrices, shim2002domains}. Additionally, \cite{iida1998diffusion} illustrates the phenomenon of diffusion-induced extinction: starting from spatially homogeneous initial data lying in the basin of attraction of one equilibrium of the ODE system, the solution of the diffusive system may converge to a different steady state. Several classical studies have investigated the conditions on coefficients and boundary conditions that allow for stable coexistence steady states; see \cite{blat1984bifurcation, cosner1984stable, matano1983pattern, shigesada1984effects}. The asymptotic behavior in the strong competition regime (i.e., as competition coefficients tend to infinity) has also been studied in \cite{conti2005asymptotic, terracini2019spiraling}.

Much of the literature on the competition-diffusion Lotka--Volterra system focuses on the whole space. In this setting, when $a\neq b >1$, it is known that ultimately only one species survives. A particularly important class of entire solutions in this context is given by \emph{travelling waves} (or \emph{travelling fronts})--solutions that propagate at a constant speed while maintaining their shape and connecting two distinct constant equilibria as $x \to \pm \infty$. The existence of travelling waves was established in \cite{gardner1982existence}, \cite{kan1996existence}, and \cite{tang1980propagating}. Since these foundational works, many efforts have been devoted to studying the properties of such solutions. In particular, we draw on results from \cite{guo2013sign}, which analyzes the sign of the propagation speed of certain travelling waves.

Knowledge of travelling waves also provides insight into the spreading speed of the dominant species in unbounded domains, especially when starting from compactly supported initial data; see \cite{kan1997fisher}. A prevailing conjecture suggests that the spreading front of solutions converges to a travelling wave profile, analogous to what occurs in scalar KPP-type equations.

For semilinear parabolic systems modeling species interaction, null controllability in finite time has been achieved (see \cite{balch}). However, these results typically neglect natural constraints on both the state and the controls.

Addressing these constraints requires complementary methodologies. For instance, \cite{dario}, \cite{domenec1}, and \cite{pouchol2019phase} consider the heat equation and reaction--diffusion equations with monostable and bistable nonlinearities. Their approach leverages the structure of travelling wave solutions and long-time controllability along paths of steady states.

The controllability of the competition-diffusion Lotka--Volterra system for two species in the weak interaction regime was investigated in \cite{sonego2024control}. In contrast, the present work focuses on the regime of strong competition.

\section{Numerical simulations}

To support Theorems \ref{thm:b}, we carried out a series of numerical simulations, the results of which are presented in Figures \ref{fig:base} and \ref{fig:b}. The simulations were performed in MATLAB using the optimization package CasADi. The equations were discretized using an explicit finite difference scheme. Although the optimization is conducted over a finite time interval, the chosen time horizons are sufficiently long to clearly exhibit the system's asymptotic behavior.

Figure \ref{fig:base} demonstrates that the system can be successfully controlled to the steady state $(0,1)$ from the initial condition $(1,0)$, using boundary controls set to $(0,1)$. As the competition coefficient $b$ increases, the simulations reveal the emergence of barriers. In Figure \ref{fig:base}, with $b = 2.6$, controllability is achieved. 

However, for the same values of $a$ and $L$, Figure \ref{fig:b} (with $b = 3.5$) shows that the system stabilizes at a non-trivial steady state under the same boundary control $(0,1)$, indicating the presence of a barrier. Similar phenomena are observed in simulations targeting the coexistence steady state $(w_1^*,w_2^*)$.

\begin{figure}[htbp]
    \centering
    \begin{subfigure}{0.49\textwidth}
        \centering
        \includegraphics[width=\linewidth]{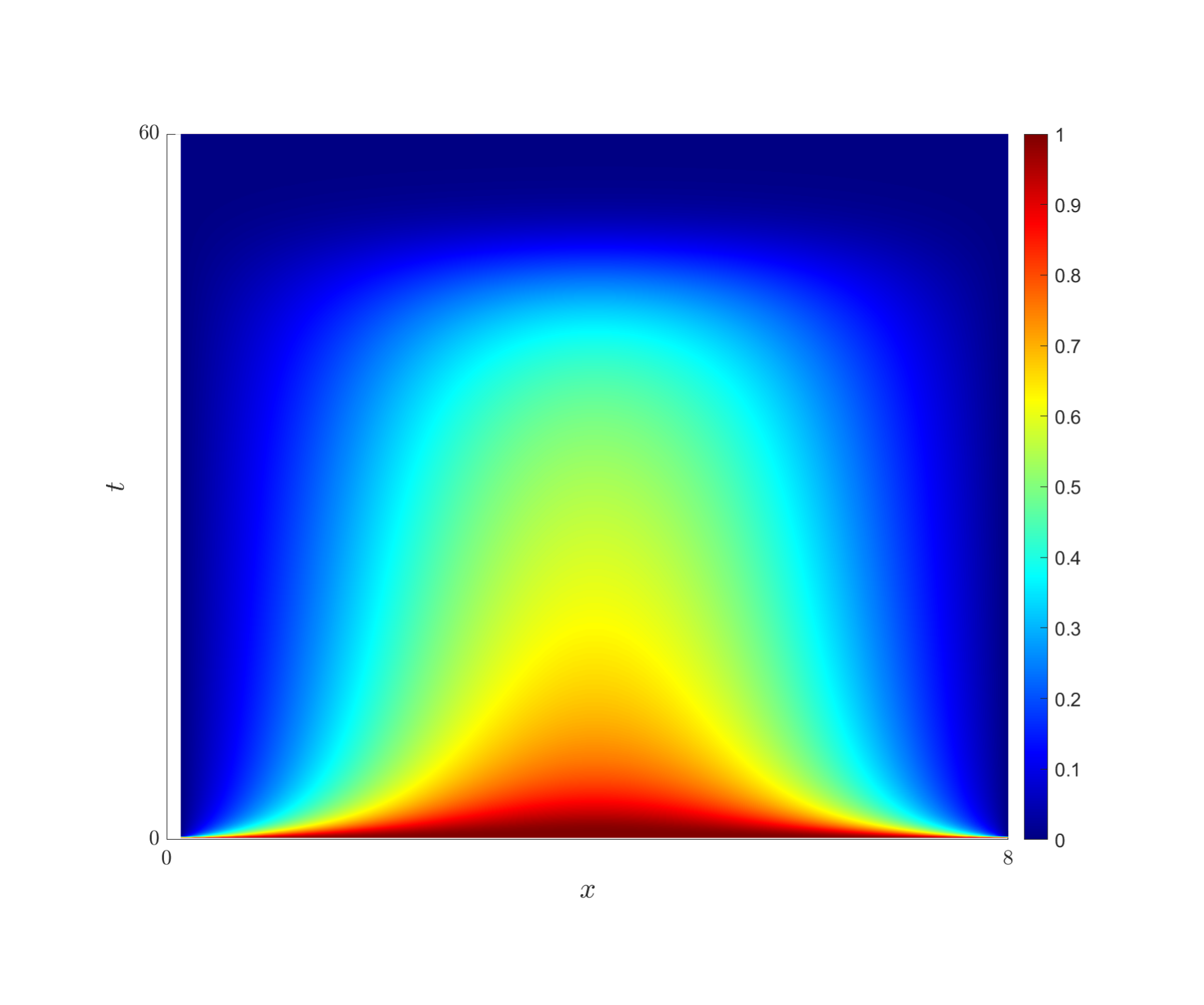}
        \caption{Solution $y_1$}
    \end{subfigure}
    \hfill
    \begin{subfigure}{0.49\textwidth}
        \centering
        \includegraphics[width=\linewidth]{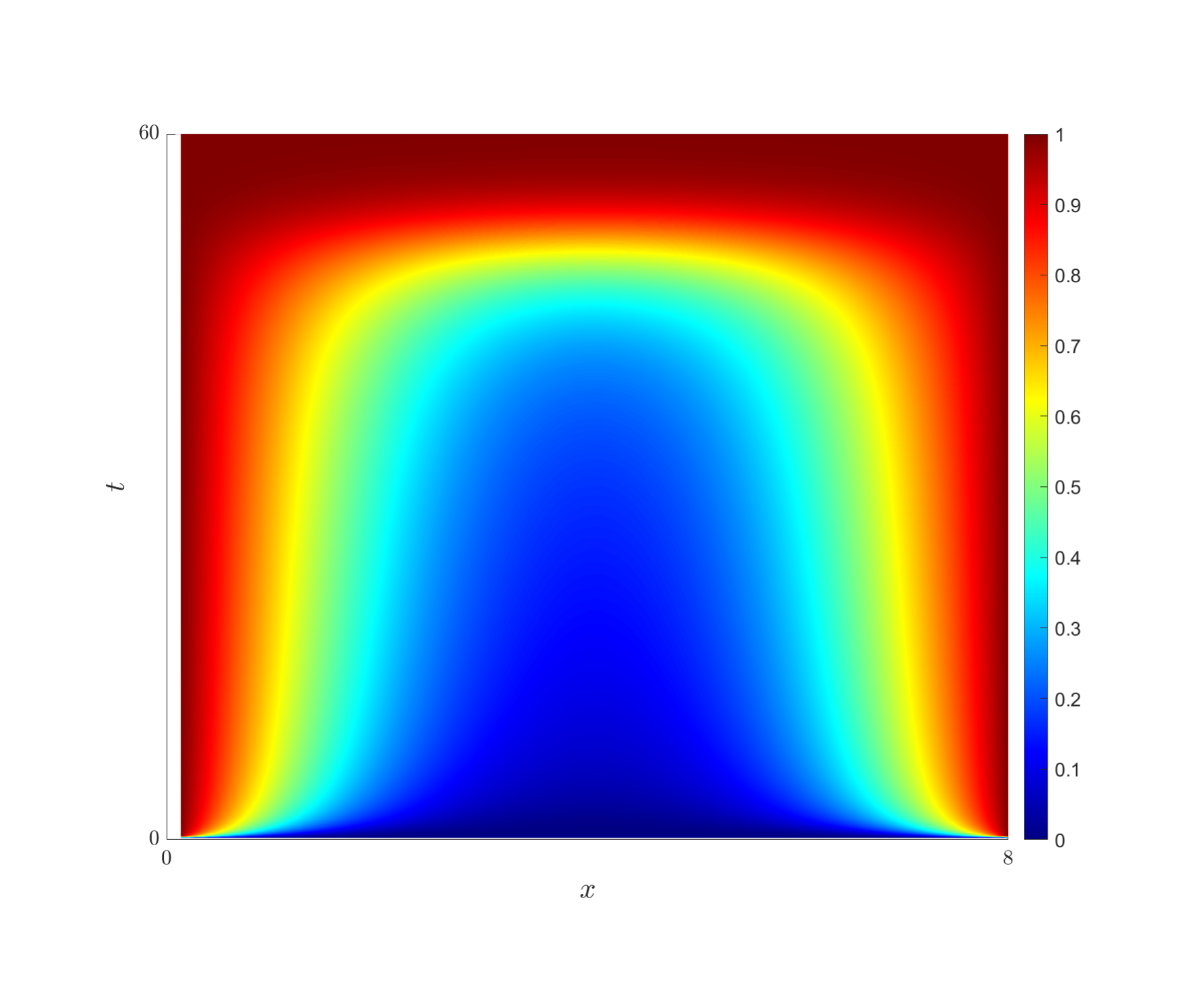}
        \caption{Solution $y_2$}
    \end{subfigure}
    \caption{Controllability to the steady state $(0,1)$ from the initial state $(1,0)$ with $L=8$, $a=1.5$, $b=2.6$, $T=60$, computed in MATLAB using the \texttt{CasADi} package. The simulation shows no barrier effects and a good approximation of the target state within the given time frame, which appears to be more than sufficient.}
	\label{fig:base}
\end{figure}

\begin{figure}[htbp]
    \centering
    \begin{subfigure}{0.48\textwidth}
        \centering
        \includegraphics[width=\linewidth]{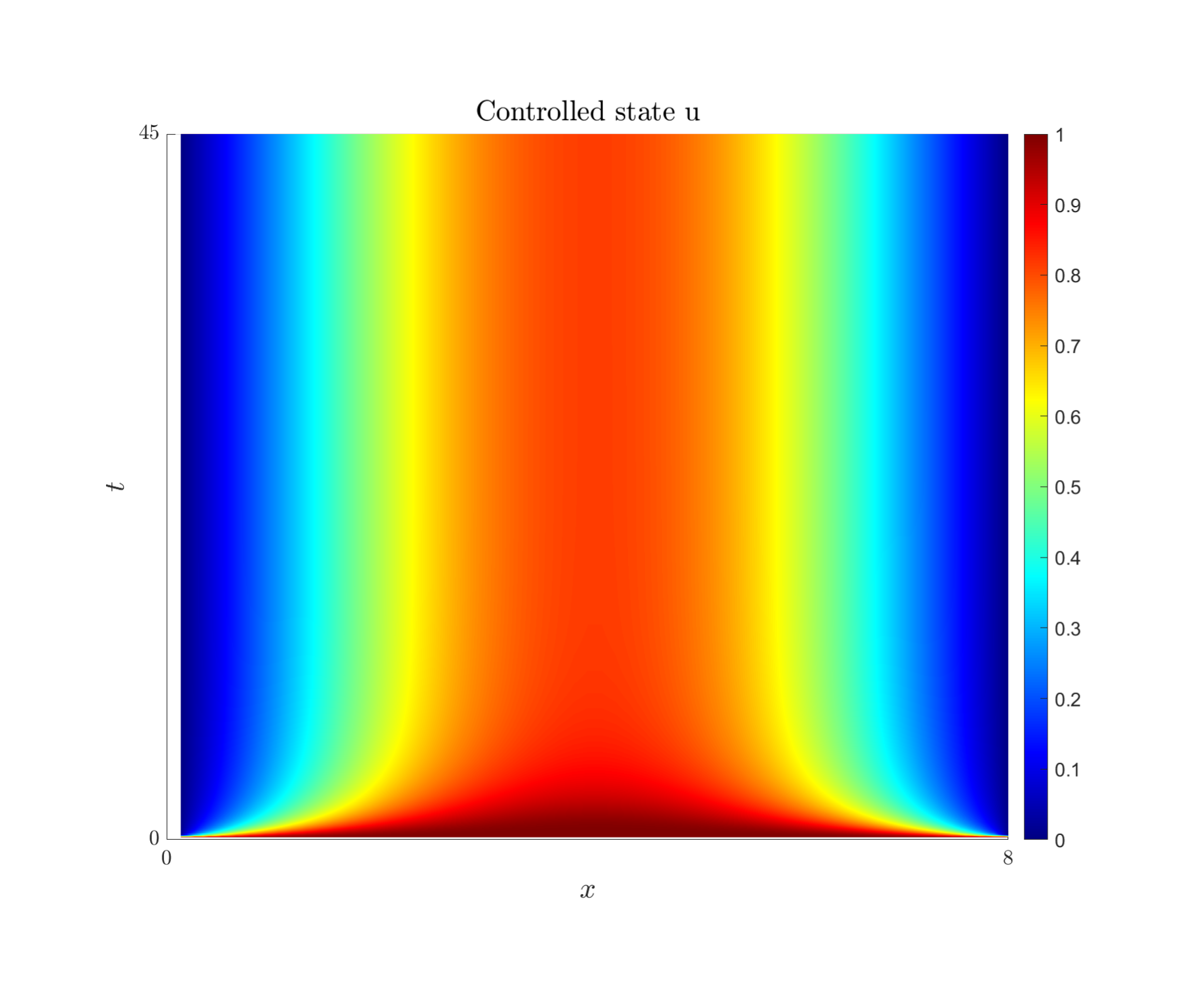}
        \caption{Solution $y_1$}
    \end{subfigure}
    \hfill
    \begin{subfigure}{0.48\textwidth}
        \centering
        \includegraphics[width=\linewidth]{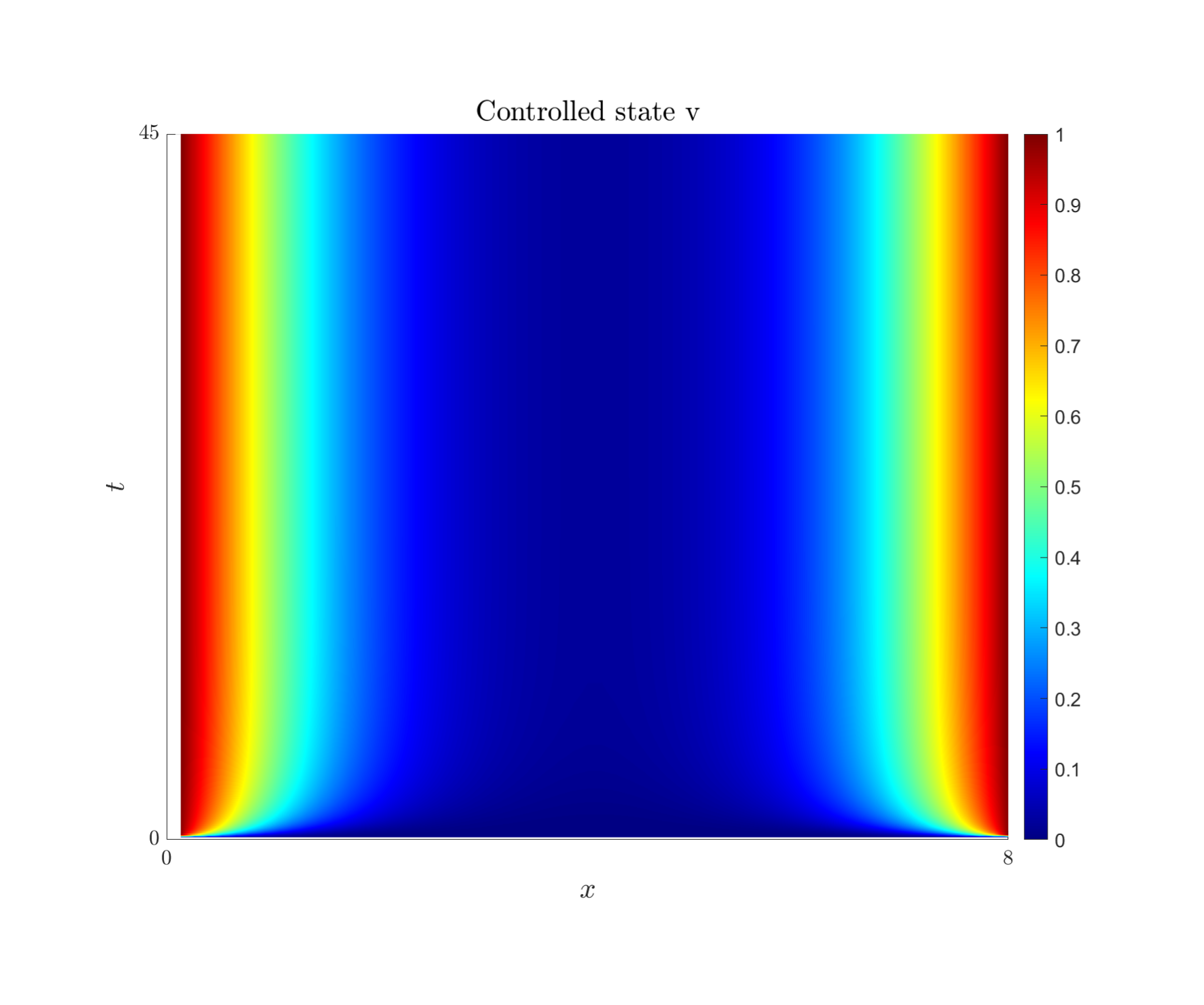}
        \caption{Solution $y_2$}
    \end{subfigure}
    \caption{Controllability to the steady state $(0,1)$ from the initial state $(1,0)$ with $L = 8$, $a = 1.5$, $b = 3.5$, and $T = 45$, simulated in \textsc{Matlab} using the \texttt{CasADi} package. Here, we observe that the target is not well approximated, and the system appears to approach a steady state associated with the boundary data $(0,1)$, thus a barrier.}
    \label{fig:b}
\end{figure}

\section{Conjectures}

Intuitively, the larger the spatial domain $L$, the weaker the influence of boundary controls on the overall behavior of the solution. As it occurs with the parameters $a$ and $b$, one may expect the emergence of barriers for sufficiently large values of $L$. In fact, the existence of a barrier exhibits a monotonic dependence on $L$. Additional numerical simulations confirm this intuition: for large enough intervals, a barrier indeed appears. This is clearly illustrated by comparing Figure \ref{fig:base} (with $L = 8$) and Figure \ref{fig:L} (with $L = 16$). In the former, the system can be controlled to the target steady state $(0,1)$ from the initial condition $(1,0)$, while in the latter, the solution stabilizes at a nontrivial steady state corresponding to the boundary data $(0,1)$, indicating the presence of a barrier. 

Nevertheless, providing a rigorous proof of this behaviour remains a significant challenge. In fact, unlike what we have with respect to the coefficients $a$ and $b$, we are lacking the analytical construction of a barrier for some suitable $\tilde{L}$. This is due to the fact that very little is known about the precise structure of solutions or the shape and properties of travelling waves, which would be the starting point to the construction of a barrier in a very large interval.

We expect the following result to hold; however, establishing it rigorously requires substantial further analysis:
\begin{conjecture}\label{thm:L}
	For any given $a>0$ and $b>1$ with $b>a$, there exits $L^*>\pi$ such that:
	\begin{itemize}
		\item  for all $0< L < L^*$, for any initial data $y_1^0, y_2^0 \in \mathcal{C}^0((0,L);[0,1])$, the system is asymptotically controllable to $(0,1)$.
		\item for all $L> L^*$, there exists a barrier, therefore, there exists some initial data $y_1^0, y_2^0 \in \mathcal{C}^0((0,L);[0,1])$ such that the system is not controllable to $(0,1)$ and $(w_1^*, w_2^*)$.
	\end{itemize} 
\end{conjecture}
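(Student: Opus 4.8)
The plan is to organise the whole dichotomy around the threshold
$L^{*}:=\inf\{\,L>\pi : \text{the barrier system } \eqref{sys:barrier} \text{ has a solution for the parameters } a,b,L\,\}$
(with $\inf\emptyset=+\infty$), and to establish four facts: (i) the existence of a barrier is monotone in $L$; (ii) $L^{*}>\pi$; (iii) $L^{*}<+\infty$; (iv) in the absence of a barrier the system is asymptotically controllable to $(0,1)$ with the constant controls $(0,1)$. Granting these, (ii) and (iv) give the first item, while (iii) together with Theorem \ref{thm:implies} gives the second — the part concerning $(w_{1}^{*},w_{2}^{*})$ being vacuous unless $a>1$, and in that case Theorem \ref{thm:implies} applies directly since the constant pair $(1,0)$ already belongs to $\mathcal I_{\phi,\psi}$ for any barrier $(\phi,\psi)$. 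It is also convenient to record, using (iv), the equivalence: a barrier exists for $(a,b,L)$ if and only if the solution of \eqref{sys:lv} issued from $(1,0)$ with controls $(0,1)$ does \emph{not} converge to $(0,1)$; indeed that solution always converges, to a steady state with boundary data $(0,1)$, by the theory of monotone systems (\cite{hirsch1983differential,hirsch1988systems}), and any such steady state is either $(0,1)$ or a barrier.

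For (i), given a barrier $(\phi,\psi)$ on $[0,L_{0}]$ and $L>L_{0}$, I would set $\underline\phi:=\phi\,\mathbf 1_{[0,L_{0}]}$, $\overline\psi:=1-(1-\psi)\,\mathbf 1_{[0,L_{0}]}$ and pair them with $(\overline\phi,\underline\psi):=(1,0)$. Since $\phi=0$ and $\psi=1$ at the endpoints of $[0,L_{0}]$, the extended functions are continuous, and the kinks at $x=L_{0}$ carry distributional atoms of the sign favourable for the sub/supersolution inequalities (by the Hopf lemma, $\phi'(L_{0}^{-})<0$ and $\psi'(L_{0}^{-})>0$); using $y_{2}\le1$ one verifies $-\underline\phi''\le\underline\phi(1-\underline\phi-a\overline\psi)$ and $-\overline\psi''\ge\overline\psi(1-b\underline\phi-\overline\psi)$, the inequalities for $(\overline\phi,\underline\psi)=(1,0)$ being trivial, and the four functions are ordered. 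The competitive sub/supersolution method built on the comparison principle (Theorem \ref{thm:cp}) then yields a solution $(\phi^{\star},\psi^{\star})$ of \eqref{sys:barrier} on $[0,L]$ with $\phi^{\star}\ge\underline\phi>0$ on $(0,L_{0})$; the strong maximum principle applied to the Schr\"odinger--type equations for $\phi^{\star}$, $1-\phi^{\star}$, $\psi^{\star}$, $1-\psi^{\star}$, together with $b>1$ and $a>0$, rules out the degenerate values and gives $0<\phi^{\star},\psi^{\star}<1$ on $(0,L)$, i.e.\ a genuine barrier. Fact (ii) is the content of Remark \ref{rmk:st245}: for $L\le\pi$ each component of any solution is dominated by a solution of the logistic equation \eqref{eq:logistic} with null Dirichlet data, which decays to $0$, so a barrier cannot exist and $L^{*}>\pi$; combined with (i), a barrier exists for every $L>L^{*}$ and for no $L<L^{*}$, which is the required dichotomy once $L^{*}<\infty$.

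The crux is (iii). When $a<1$ it is elementary: for $L>\pi/\sqrt{1-a}$ the scalar problem $-v''=v(1-a-v)$, $v(0)=v(L)=0$, has a solution with $0<v<1-a$, and $(\underline\phi,\overline\psi):=(v,1)$, $(\overline\phi,\underline\psi):=(1,0)$ is an ordered sub/supersolution pair, so $L^{*}\le\pi/\sqrt{1-a}<\infty$. The open case is $a\ge1$ (and, for $a<1$, the window $\pi<L<\pi/\sqrt{1-a}$). My preferred route is to exploit large $L$ via the travelling front $(Y_{1},Y_{2})(x-ct)$ connecting $(1,0)$ at $-\infty$ to $(0,1)$ at $+\infty$ with $c>0$, whose existence under \eqref{hyp:sc} is recalled in Remark \ref{rmk:d}: heuristically, on a long interval the solution issued from $(1,0)$ with controls $(0,1)$ keeps a bulk region near $(1,0)$ because the ``$(0,1)$--invasion'' nucleated at each endpoint cannot outrun this front, so by the equivalence above a barrier must exist. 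The hard part — and, I expect, the real obstacle — is to turn this into a rigorous time--dependent sub/supersolution assembled from (reflected) translates of $(Y_{1},Y_{2})$ that \emph{also} complies with the Dirichlet condition $y_{1}=0$ at the endpoints; this is exactly the boundary--layer analysis that the authors point out is missing, since the shape of the solution where the interior front meets the homogeneous boundary condition is not known. A weaker but unconditional substitute is to construct explicit ``box'' sub/supersolutions ($\underline\phi$ a plateau close to its carrying capacity in the middle of $[0,L]$, $\overline\psi$ a shallow well there, with $O(1)$ transition layers at the ends): this works once $b-1$ is bounded below, recovering results of the type of Theorems \ref{thm:b}--\ref{thm:a}, but the curvature budget in the transition layers degenerates as $b\to1^{+}$, so it does not reach all $b>1$ — equivalently it does not prove $\inf_{L}b^{*}(a,L)=1$, which, by the monotonicity of $b^{*}(a,\cdot)$ (itself a consequence of (i)), is precisely the quantitative statement needed in the regime $a\ge1$.

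Finally, for (iv): suppose no barrier exists for $(a,b,L)$. Any steady state of \eqref{sys:lv} with boundary data $(0,1)$ and values in $[0,1]^{2}$ has $\phi\ge0$ solving a linear equation $-\phi''+q(x)\phi=0$ with $q\in L^{\infty}$, so by the strong maximum principle either $\phi\equiv0$ or $\phi>0$ in $(0,L)$; the latter is excluded, being a barrier, so $\phi\equiv0$, and then $w:=1-\psi\ge0$ satisfies $w''=(1-w)w\ge0$ with $w(0)=w(L)=0$, hence $w\equiv0$ by convexity, i.e.\ $\psi\equiv1$. Thus $(0,1)$ is the unique steady state compatible with these boundary data; since $[0,1]^{2}$ is invariant and every solution with constant boundary data $(0,1)$ converges to a steady state (\cite{hirsch1983differential,hirsch1988systems}), every admissible trajectory converges to $(0,1)$ uniformly in $x$. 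This yields asymptotic controllability to $(0,1)$ with $(u_{1},u_{2})\equiv(0,1)$ for all $L<L^{*}$, which is stronger than what the first item requires, and completes the scheme modulo (iii).
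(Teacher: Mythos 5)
This statement is labelled a \emph{Conjecture} in the paper: the authors give no proof and explicitly identify the missing ingredient as ``the analytical construction of a barrier for some suitable $\tilde L$'' at fixed admissible $(a,b)$. Your proposal should therefore be judged as an attempted proof of an open statement, and, to your credit, it is honest about where it fails.

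Your reduction is sound and matches the machinery the paper uses for Theorems \ref{thm:b} and \ref{thm:a}: defining $L^*$ as the infimum of lengths admitting a barrier, monotonicity in $L$ (your fact (i) is Proposition \ref{prop:mono}, part 1), non-controllability above the threshold via Theorem \ref{thm:implies}, and the observation that in the absence of a barrier every trajectory with constant controls $(0,1)$ converges to $(0,1)$ (your fact (iv) is exactly the argument in the proofs of the first statements of Theorems \ref{thm:b} and \ref{thm:a}). For $a<1$ your fact (iii) is also correct and is essentially Proposition \ref{lemma:a}: the logistic subsolution $-v''=v(1-a-v)$ paired with $\psi\equiv 1$ yields a barrier for $L>\pi/\sqrt{1-a}$, so $L^*<\infty$ in that regime.

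There remain two genuine gaps. The main one is your fact (iii) for $a\ge 1$ (and more generally for a \emph{given} $b$ that is not large): neither the travelling-front heuristic nor the ``box'' sub/supersolution closes it, as you acknowledge. Proposition \ref{thm:bbarrier} produces a barrier only after enlarging $b$ to some $\bar b$ depending on $(a,L)$, and the monotonicity of Proposition \ref{prop:mono} goes the wrong way to transfer that to the prescribed $b$ by increasing $L$; what is needed is precisely $\inf_{L>\pi} b^*(a,L)=1$ (equivalently $\le b$ for the given $b$), which is the open quantitative statement. The second, smaller gap is the strict inequality $L^*>\pi$: your fact (ii) only shows that no barrier exists for $L\le\pi$, hence $L^*\ge\pi$; the conjecture asserts a nondegenerate interval $(\pi,L^*)$ of controllable lengths, which requires ruling out barriers for $L$ slightly above $\pi$ (plausible via a first-eigenfunction estimate on $-\phi''=\phi(1-\phi-a\psi)$ forcing $\phi$ to degenerate as $L\to\pi^+$, but this is not carried out). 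So the proposal is a correct framing that reduces the conjecture to the same obstruction the authors identify, not a proof of it.
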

The critical case $L=L^*$ is still unclear, since the method we used to establish the case of equality for $a^*$ and $b^*$ cannot be applied.

\begin{figure}[htbp]
    \centering
    \begin{subfigure}{0.48\textwidth}
        \centering
        \includegraphics[width=\linewidth]{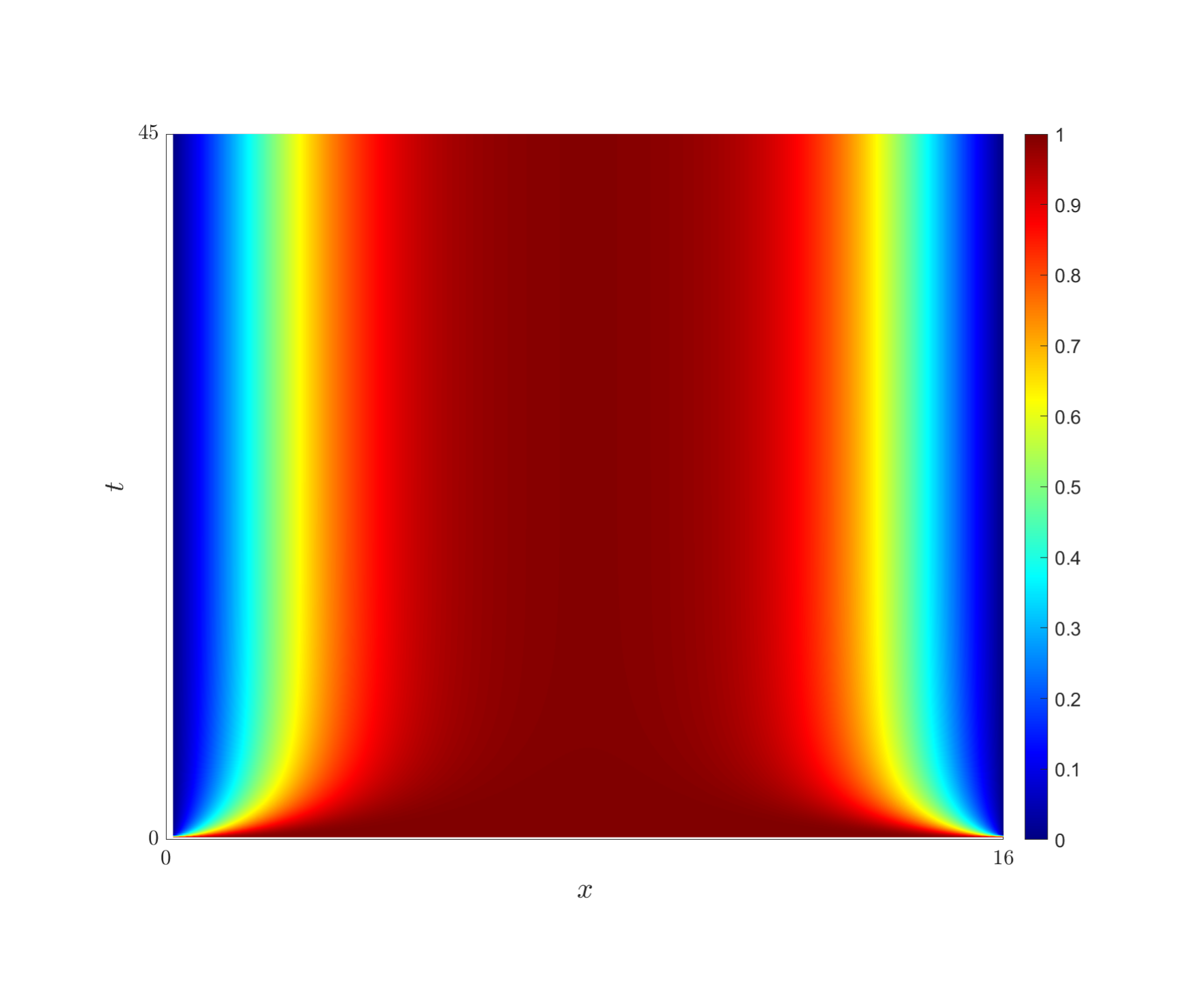}
        \caption{Solution $y_1$}
    \end{subfigure}
    \hfill
    \begin{subfigure}{0.48\textwidth}
        \centering
        \includegraphics[width=\linewidth]{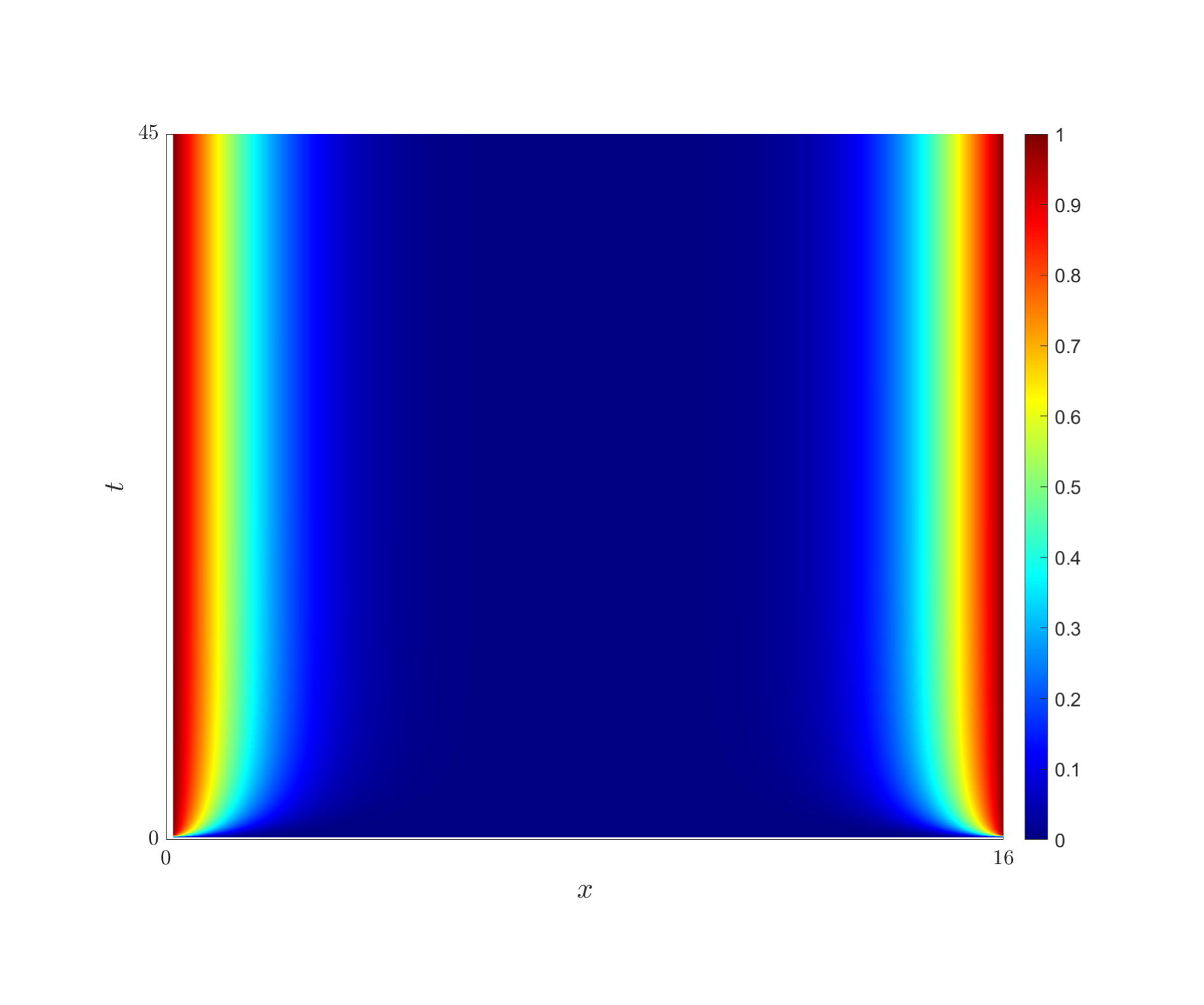}
        \caption{Solution $y_2$}
    \end{subfigure}
    \caption{The barrier effect: Controllability to the steady state $(0,1)$ from the initial state $(1,0)$ with $L=16$, $a=1.5$, $b=2.6$, and $T=45$, simulated in \textsc{Matlab} using the \texttt{CasADi} package. The target is not well approximated, and the system appears to converge to an steady state corresponding to the boundary data $(0,1)$, indicating the presence of a barrier.}
    \label{fig:L}
\end{figure}

\section{Proofs }

\subsection{Preliminaries}\label{ss:preliminars}

Here we collect some tools and results from the literature that we will use in the proofs of our theorems.

\medskip
\noindent
\textbf{Monotonicity structure.}
Monotone systems constitute a vast and well-studied area in the theory of partial differential equations. For a comprehensive treatment of monotone competitive systems, we refer the reader to \cite{Smith2008MonotoneDS}.

In such systems -- including \eqref{sys:lv} -- the dynamics preserve the partial ordering of the states. 
To formalize this idea, we define a \emph{subsolution} as a pair $(y_1, y_2)$ satisfying the following conditions:
\begin{equation}\label{def:subsol}
\begin{split}
\partial_t y_1 - \partial^2_x y_1 \leq y_1(1 - y_1 - a y_2), \qquad  & \text{for} \ x\in (0, L), \ t>0,\\
\partial_t y_2 -  \partial^2_x y_2 \geq   y_2(1 - b y_1 - y_2), \qquad  & \text{for} \ x\in (0, L), \ t>0.
\end{split}
\end{equation}
Similarly, a \emph{supersolution} is defined as a pair $(y_1, y_2)$ for which the corresponding inequalities hold in the opposite direction.

We also point out that if $(y_1,y_2)$ and $(z_1, z_2)$ are two pairs of subsolutions for the same interval $(0,L)$, then the pair $(w_1,w_2)$ defined by 
\begin{equation}\label{def:gensubsol}
    w_1(x)=\sup\{y_1(x),z_1(x)\}, \  w_2(x)=\inf \{ y_2(x), z_2(x) \}, \quad x\in (0,L),
\end{equation}
is called a \emph{generalised subsolution}. Similarly, 
if $(y_1,y_2)$ and $(z_1, z_2)$ are two pairs of supersolutions for the same interval $(0,L)$, then the pair $(w_1,w_2)$ defined by $$w_1(x)=\inf\{y_1(x),z_1(x)\}, \  w_2(x)=\sup \{ y_2(x), z_2(x) \}, \quad x\in (0,L),$$
is called a \emph{generalised supersolution}.

Moreover, the construction can be further generalised to cases where information on the two sub- (respectively super-) solutions is only local. Consider  $(x_1, x_2)\subseteq (0,L)$ an open interval and let $(y_1,y_2)$ and $(z_1, z_2)$ be two pairs of subsolutions defined on $(x_1, x_2)$. Suppose that 
$y_1(x_1) \geq z_1(x_1)$ and $y_1(x_2) \geq z_1(x_2)$, and that
$y_2(x_1) \leq z_2(x_1)$ and $y_2(x_2) \leq z_2(x_2)$.
Then, the pair $(w_1,w_2)$ defined by 
\begin{equation}\label{eq:gensub1}
    w_1(x)= 
    \begin{cases}
        y_1(x) & x \in (0, L)\setminus (x_1, x_2) \\
        \sup\{y_1(x),z_1(x)\} & x\in(x_1, x_2)
    \end{cases}
\end{equation}
and
\begin{equation}\label{eq:gensub2}
    w_2(x)= 
    \begin{cases}
        y_2(x) & x \in (0, L)\setminus (x_1, x_2) \\
        \inf \{ y_2(x), z_2(x) \} & x\in(x_1, x_2)
    \end{cases}
\end{equation}
is still a generalised subsolution. The analogous statement holds for supersolutions, with the roles of infimum and supremum interchanged.

We have the following comparison principle for Lotka -- Volterra systems (see for example \cite{smith1988systems} or \cite{girardin2019invasion}):

\begin{theorem}[Comparison Principle]\label{thm:cp}
	\emph{Let $I$ be an open interval of $\R$ and let $(\underline{y}_1, \underline{y}_2)$ and $(\overline{y}_1,\overline{y}_2)$ be respectively a (generalised) subsolution and a (generalised) supersolution of system \eqref{sys:lv} such that 
	\begin{equation*}
	\begin{split}
	\underline{y}_1(x,0) \leq \overline{y}_1(x,0) \quad &\text{and} \quad \overline{y}_2(x,0) \leq \underline{y}_2(x,0) \qquad \text{for all} \ x\in I, \\
	\underline{y}_1(x,t) \leq \overline{y}_1(x,t) \quad &\text{and} \quad \overline{y}_2(x,t) \leq \underline{y}_2(x,t) \qquad \text{for all} \ x\in \partial I, t\geq 0.	
	\end{split}
	\end{equation*}
	Then, 
	\begin{equation*}
	\underline{y}_1(x,t) \leq \overline{y}_1(x,t) \quad \text{and} \quad \overline{y}_2(x,t) \leq \underline{y}_2(x,t) \qquad \text{for all} \ x\in I, t\geq 0.
	\end{equation*}
	}
\end{theorem}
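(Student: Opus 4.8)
The plan is to exploit the competitive structure of \eqref{sys:lv}: with respect to the partial order in which the first component increases and the second decreases, the reaction is quasimonotone, so the statement reduces to a maximum principle for a weakly coupled \emph{cooperative} parabolic system satisfied by the differences $\overline{y}_1-\underline{y}_1$ and $\underline{y}_2-\overline{y}_2$. Throughout I would assume, as in all applications in this paper, that the sub- and supersolutions are nonnegative and bounded (e.g.\ valued in $[0,1]\times[0,1]$); this makes the zero-order coefficients produced below lie in $L^\infty$ and, crucially, gives the off-diagonal coupling of the difference system a definite sign. When $I$ is unbounded, the boundedness of the states lets me use the bounded form of the parabolic maximum principle, so no growth hypothesis at infinity is required.

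First I would treat genuine (non-generalised) sub- and supersolutions. Writing $f_1(s,r)=s(1-s-ar)$, $f_2(s,r)=r(1-bs-r)$, I set $p:=\overline{y}_1-\underline{y}_1$ and $q:=\underline{y}_2-\overline{y}_2$; the hypotheses say exactly that $p,q\ge 0$ on the parabolic boundary $(I\times\{0\})\cup(\partial I\times[0,\infty))$, and I must show $p,q\ge 0$ on $I\times[0,\infty)$. Subtracting the differential inequalities componentwise and telescoping the reaction terms through the intermediate states $(\underline{y}_1,\overline{y}_2)$ and $(\overline{y}_1,\underline{y}_2)$, the elementary identities
\begin{equation*}
f_1(\underline{y}_1,\overline{y}_2)-f_1(\underline{y}_1,\underline{y}_2)=a\,\underline{y}_1\,q,\qquad f_2(\underline{y}_1,\underline{y}_2)-f_2(\overline{y}_1,\underline{y}_2)=b\,\underline{y}_2\,p,
\end{equation*}
together with the mean value theorem applied to the remaining one-variable increments, produce bounded functions $c_1,c_2\in L^\infty$ with
\begin{equation*}
\partial_t p-\partial_x^2 p\ \ge\ c_1\,p+a\,\underline{y}_1\,q,\qquad \partial_t q-\partial_x^2 q\ \ge\ b\,\underline{y}_2\,p+c_2\,q .
\end{equation*}
Since $\underline{y}_1,\underline{y}_2\ge 0$, the coupling coefficients $a\underline{y}_1$ and $b\underline{y}_2$ are nonnegative: this is the cooperativity that makes the maximum principle work.

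To conclude the genuine case I would apply the maximum principle for weakly coupled cooperative systems. Fixing $\lambda>\max\{\|c_1\|_\infty+a,\ \|c_2\|_\infty+b\}$ and setting $P:=e^{-\lambda t}p$, $Q:=e^{-\lambda t}q$, the pair $(P,Q)$ is bounded, is nonnegative on the parabolic boundary, and satisfies the same cooperative inequalities with $c_i$ replaced by $c_i-\lambda$, so that $c_1-\lambda+a\underline{y}_1<0$ and $c_2-\lambda+b\underline{y}_2<0$ pointwise. If $m:=\inf_{\overline{I}\times[0,T]}\min\{P,Q\}<0$ for some $T$, then (using boundedness, and a standard $\varepsilon(1+|x|^2+Kt)$ correction if $I$ is unbounded) this infimum is attained at an interior spatial point $x_0$ and a time $t_0>0$, say by $P$; there $\partial_t P\le 0$, $\partial_x^2 P\ge 0$, $P(x_0,t_0)=m$ and $Q(x_0,t_0)\ge m$, whence, using $a\underline{y}_1\ge 0$,
\begin{equation*}
0\ \le\ \partial_t P-\partial_x^2 P-(c_1-\lambda)P-a\underline{y}_1 Q\ \le\ -\big(c_1-\lambda+a\underline{y}_1\big)\,m\ <\ 0
\end{equation*}
at $(x_0,t_0)$, a contradiction (if the infimum is realised by $Q$, use the second inequality instead). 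Hence $p,q\ge 0$, which is Theorem \ref{thm:cp} for genuine sub/supersolutions.

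Finally I would remove the ``genuine'' assumption, and this is the step I expect to be the main obstacle. A generalised subsolution is a finite componentwise $\sup$/$\inf$ of genuine subsolutions, possibly patched only on a subinterval as in \eqref{def:gensubsol}--\eqref{eq:gensub2} (and likewise for generalised supersolutions), so the differences $p,q$ may have downward corners and the pointwise computation above is not literally valid. The route I would take is to peel off the constituent pieces one at a time and invoke the genuine comparison on each: for a global $\sup$/$\inf$, every constituent pair inherits the required boundary ordering from the hypothesis on the generalised pair (because $y_1^{(i)}\le\underline{y}_1\le\overline{y}_1\le\overline{y}_1^{(j)}$ and $\overline{y}_2^{(j)}\le\overline{y}_2\le\underline{y}_2\le y_2^{(i)}$ on the parabolic boundary), so the genuine comparison applies to each $(y_1^{(i)},y_2^{(i)})$ versus $(\overline{y}_1^{(j)},\overline{y}_2^{(j)})$, and taking $\sup$/$\inf$ gives the result; for the local patches one must order the global constituent against the supersolution first---which in particular settles the interior seam points $x_1,x_2$---and only then use the lateral and initial-data conditions built into \eqref{eq:gensub1}--\eqref{eq:gensub2} to bring the local constituent $(z_1,z_2)$ within the scope of the genuine comparison on $(x_1,x_2)\times[0,\infty)$. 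The delicate part is exactly this bookkeeping: ensuring the seam ordering is available before comparison is invoked on a sub-cylinder, so that the reduction to the genuine case is not circular, and then iterating over the finitely many $\sup$/$\inf$ operations. An alternative bypassing the bookkeeping is to note, by a Kato-type inequality exploiting that $f_1$ is decreasing in its second variable and $f_2$ in its first, that a generalised sub/supersolution is one in the distributional sense, and to rerun the previous step in energy form against the negative parts of $P$ and $Q$, as in monotone-systems treatments like \cite{Smith2008MonotoneDS}.
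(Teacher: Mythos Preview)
The paper does not actually prove Theorem~\ref{thm:cp}: it is stated as a known result from the literature, with explicit references to \cite{smith1988systems} and \cite{girardin2019invasion}, and no argument is given. So there is no ``paper's own proof'' to compare your attempt against.

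That said, your argument is the standard route to such comparison principles for competitive systems and is essentially correct. The reduction to a cooperative system for the differences $p=\overline{y}_1-\underline{y}_1$ and $q=\underline{y}_2-\overline{y}_2$ via the telescoping identities is right (and is exactly why the competitive order is the natural one here), and the exponential-shift maximum-principle contradiction is clean. Two minor remarks: (i) in the contradiction step you implicitly use that the infimum is attained at an interior spatial point with $t_0>0$; this is fine once you have ruled out the parabolic boundary via the hypotheses and, in the unbounded case, invoked the barrier correction you mention; (ii) your ``peeling'' reduction for generalised sub/supersolutions is correct in the setting actually used in the paper, where the patched constructions are time-independent so the seam ordering $z_1(x_i)\le y_1(x_i)$, $z_2(x_i)\ge y_2(x_i)$ holds for all $t$. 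The bookkeeping you outline---first compare the global constituent $(y_1,y_2)$ against the supersolution on all of $I$, thereby securing the lateral ordering at $x_1,x_2$, and only then apply the genuine comparison to $(z_1,z_2)$ on the subcylinder---is exactly the right order of operations and is not circular. Your alternative via a Kato inequality and an energy estimate on the negative parts is also viable and closer in spirit to the monotone-systems references the paper cites.
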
 

This result says that the set of solutions has a natural ordering, and that the ordering of the parabolic boundary data is extended to the interior of the set.

Moreover, this result also holds in higher dimensions.

\medskip
\noindent
\textbf{Existence of constrained solutions for the boundary control problem.}
Notice that the constant solutions $(0,1)$ and $(1,0)$ act as sub- and super-solutions. Hence, the existence of solutions for initial data and controls respecting the bounds \eqref{hyp:bounds} is guaranteed by the classical theory of monotone methods (see for example \cite{sattinger1972monotone,cosner1984stable}). These solutions fulfil the same constraints.

\medskip
\noindent
\textbf{Travelling fronts.}
Travelling fronts are powerful tools in the analysis of Lotka -- Volterra equations. These are special solutions that connect two equilibria and can be viewed as time translations of a fixed spatial profile.

A \emph{travelling front solution} with speed $c\in\R$ connecting $(0,1)$ and $(1,0)$ is a pair
\begin{equation}\label{tf}
(\alpha,\beta)(x,t):=(Y_1, Y_2)(x-ct)
\end{equation}
where $Y_1, \ Y_2: \R\to \R$ solve the system
\begin{equation}\label{sys:tw}
\left\{
\begin{array}{l}
Y_1''+cY_1' + Y_1(1-Y_1-aY_2)=0, \\
Y_2''+cY_2' + Y_2(1-Y_2-bY_1)=0,
\end{array}\right.
\end{equation}
with the conditions
\begin{equation}\label{cond:tw}
(Y_1,Y_2)(-\infty)=(0,1), \quad (Y_1,Y_2)(+\infty)=(1,0), \quad Y_1'>0, \ Y_2'<0.
\end{equation}
Notice that a translation of a travelling front is still a travelling front.

Existence of travelling fronts connecting $(0,1)$ and $(1,0)$ and their uniqueness up to translations when $a,b >1$ was proved in \cite{gardner1982existence}, \cite{kan1996existence} and \cite{tang1980propagating}.
The sign of the speed was investigated in \cite{guo2013sign}.
In particular, the following holds:
\begin{theorem}[Theorem 1.1 in \cite{guo2013sign}]\label{thm:guosign}
	\emph{If $a<b$, then the speed $c$ of the travelling front defined by \eqref{tf}-\eqref{cond:tw} satisfies $c<0$. }
\end{theorem}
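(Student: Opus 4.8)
The plan is to establish the sign of the wave speed by distinguishing the nature of the left endpoint $(0,1)$ for the underlying ODE system \eqref{sys:odes}, i.e. whether $a<1$ or $a\ge 1$ (the borderline $a=1$ being recovered afterwards by continuity). The two regimes are governed by different mechanisms: a phase-plane linearisation suffices when $(0,1)$ is a saddle, whereas when $(0,1)$ is a stable node one exploits the reflection symmetry of the profile system \eqref{sys:tw}.

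\emph{Case $a<1<b$.} Linearising \eqref{sys:tw} at the rest point $(Y_1,Y_1',Y_2,Y_2')=(0,0,1,0)$, the $Y_1$-block decouples into $Y_1''+cY_1'+(1-a)Y_1=0$, with characteristic exponents $\lambda_\pm=\tfrac12\big(-c\pm\sqrt{c^2-4(1-a)}\big)$; note $\lambda_+\lambda_-=1-a>0$ and $\lambda_++\lambda_-=-c$. A front with $Y_1>0$ and $Y_1(-\infty)=0$ cannot have these exponents complex — otherwise $Y_1$ would change sign near $\xi=-\infty$ — so they are real and $c^2\ge 4(1-a)$; since their product is $1-a>0$ they have the same sign, and they cannot both be negative (the invariant subspace $\{Y_1\equiv 0\}$ of \eqref{sys:tw} would then contain the orbit, contradicting $Y_1>0$), so both are positive and $-c=\lambda_++\lambda_->0$. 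In fact $c\le -2\sqrt{1-a}<0$, which is stronger than claimed.

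\emph{Case $1<a<b$.} First I would record the reflection symmetry: if $(Y_1,Y_2)(\cdot)$ solves \eqref{sys:tw}-\eqref{cond:tw} with speed $c$ and coefficients $(a,b)$, then $(Y_2,Y_1)(-\cdot)$ solves it with speed $-c$ and coefficients $(b,a)$. Taking $a=b$ and invoking uniqueness of the front up to translations (valid here, see \cite{gardner1982existence,kan1996existence,tang1980propagating}) shows that the $(b,b)$-front is standing; denote its profile $(P,Q)$, so $P''+P(1-P-bQ)=0$, $Q''+Q(1-Q-bP)=0$ with $(P,Q)(-\infty)=(0,1)$ and $(P,Q)(+\infty)=(1,0)$. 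Because $b>a$ and $P,Q\ge 0$, the time-independent pair $(P,Q)$ is a subsolution of \eqref{sys:lv} in the sense of \eqref{def:subsol}: indeed $-P''=P(1-P-bQ)\le P(1-P-aQ)$ while $-Q''=Q(1-Q-bP)$. Solving the Cauchy problem for \eqref{sys:lv} and for the $(b,b)$-system from one and the same front-like datum connecting $(0,1)$ to $(1,0)$, the comparison principle (Theorem \ref{thm:cp}) gives that the first component of the $(a,b)$-solution dominates that of the $(b,b)$-solution for all $t>0$. Passing to the limit $t\to+\infty$ and using the stability of bistable fronts — the $(b,b)$-solution converges to $P(\cdot-m(t))$ with $m(t)$ bounded, the $(a,b)$-solution to $Y_1(\cdot-ct-n(t))$ with $n(t)$ bounded — yields $Y_1\big(x-ct-n(t)\big)\gtrsim P\big(x-m(t)\big)$ uniformly in $x$; evaluating at $x=m(t)+N$ with $N$ large makes the right side close to $1$, while for $c>0$ the left side tends to $Y_1(-\infty)=0$, a contradiction. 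Hence $c\le 0$. To upgrade this to $c<0$, suppose $c=0$: then the $(a,b)$-front is itself standing, and $(P,Q)$ is a \emph{strict} subsolution wherever $0<P,Q<1$; sliding a translate of $(P,Q)$ upward against the $(a,b)$-front and applying the strong maximum principle to the cooperative linear system satisfied by the difference excludes interior contact, while the exponential asymptotics at the endpoints exclude contact at infinity — contradiction.

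The main obstacle lies in this last step of the bistable case: ruling out contact ``at infinity'' in the sliding argument requires the precise exponential behaviour of the fronts near $(0,1)$ and $(1,0)$; moreover the borderline value $a=1$, where $(0,1)$ becomes degenerate and the linearisation of the first case gives no information, must be handled separately, e.g. by approximating $a=1$ from below and using continuity of the wave speed in the parameters. A further, more routine, input used above is the convergence of the two Cauchy problems to the relevant travelling and standing fronts with bounded shifts, which follows from the stability theory for monotone bistable systems. All of this is carried out in detail in \cite{guo2013sign}.
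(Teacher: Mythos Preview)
The paper does not prove Theorem~\ref{thm:guosign}; it is simply quoted from \cite{guo2013sign} as an external input, so there is no ``paper's own proof'' to compare against. Your proposal is therefore not a reconstruction of anything in the present paper but an independent sketch of the Guo--Lin argument.

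That said, a few remarks on your sketch. The monostable case $a<1$ is essentially correct: the decoupled linearisation of the $Y_1$-block at $(0,1)$ forces real positive exponents, hence $c\le -2\sqrt{1-a}<0$; this is the standard KPP spreading-speed bound. For the bistable case $1<a<b$, your strategy --- compare with the standing $(b,b)$-front obtained from the reflection symmetry, then use front stability and a sliding argument --- is indeed the line taken in \cite{guo2013sign}, but as you yourself acknowledge, the hard work lies precisely in the steps you leave open: the convergence of both Cauchy problems to their respective fronts with \emph{bounded} phase shifts, and the exclusion of contact at infinity in the sliding argument via sharp exponential asymptotics. These are not routine; they form the technical core of \cite{guo2013sign}. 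Your treatment of the borderline $a=1$ by continuity of the speed in the parameters is plausible but also needs justification (continuous dependence of the bistable wave speed on coefficients). So the proposal is a reasonable roadmap rather than a proof, which is fitting given that the present paper never intended to supply one.
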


Therefore, as $t$ evolves from $-\infty$ to $+\infty$, the travelling wave solution passes from $(0,1)$ to $(1,0)$.

Note that by imposing alternative conditions in place of \eqref{cond:tw}, one could obtain different types of travelling fronts. A selection of such known fronts is illustrated in Figure \ref{fig:waves}.
However, throughout the proofs, we will only consider travelling fronts as in \eqref{cond:tw}. For simplicity, we will refer to these as travelling fronts without risk of ambiguity.

\medskip
\noindent
\textbf{Large time dynamics for the logistic equation.}
The following lemma,  widely known in the literature (see for example \cite{cantrell1989diffusive}), will be useful in the proof of several results in this paper, including Theorem \ref{thm:strong}.

\begin{lemma}\label{lemma:logistic}
	\emph{Let $\theta$ be the solution of the logistic equation  with Dirichlet boundary conditions as in \eqref{eq:logistic}.
	\begin{enumerate}
		\item Assume $L\leq \pi$. Then the only steady state solution to the logistic equation problem \eqref{eq:logistic} is the trivial one and any solution $\theta$ issued from a nonnegative initial datum satisfies $\theta \to 0$ as $t\to +\infty$.
		\item Assume $L>\pi$. Then \eqref{eq:logistic} admits a  steady state solution $\Theta$ which is strictly positive in $(0,L)$ and any solution $\theta$ issued from a nonnegative initial datum satisfies $\theta \to \Theta$ as $t\to +\infty$.
	\end{enumerate} 
	}
\end{lemma}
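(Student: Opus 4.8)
The plan is to reduce the whole statement to the spectral theory of the Dirichlet Laplacian on $(0,L)$, used together with the comparison principle for scalar parabolic equations and the order-preserving structure of the logistic flow. Write $\lambda_1=\pi^2/L^2$ for the first Dirichlet eigenvalue of $-\partial_{xx}^2$ on $(0,L)$ and $\varphi_1(x)=\sin(\pi x/L)>0$ for the associated eigenfunction; the two regimes are governed by the sign of $\lambda_1-1$, which is $\ge 0$ exactly when $L\le\pi$.

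First I would treat the stationary problem. Testing the equation $-\Theta''=\Theta(1-\Theta)$, $\Theta(0)=\Theta(L)=0$, against $\varphi_1$ and integrating by parts twice (all boundary terms vanish) yields $(\lambda_1-1)\int_0^L\Theta\varphi_1\,dx=-\int_0^L\Theta^2\varphi_1\,dx$. If $L\le\pi$, then $\lambda_1\ge 1$, so the left-hand side is $\ge 0$ and the right-hand side $\le 0$, with equality only when $\Theta\equiv 0$; hence no nontrivial nonnegative steady state exists. If $L>\pi$, then $\lambda_1<1$, and fixing $\varepsilon\in(0,1-\lambda_1)$ the pair $(\varepsilon\varphi_1,1)$ is an ordered sub/supersolution of the stationary equation, so the monotone iteration scheme produces a steady state $\Theta$ with $\varepsilon\varphi_1\le\Theta\le 1$, strictly positive in $(0,L)$ by the strong maximum principle. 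Uniqueness of the positive steady state follows from the strict decrease of $s\mapsto 1-s$, by a Brezis--Oswald-type sweeping argument: for two positive steady states $\Theta_1,\Theta_2$, the number $\gamma:=\max_{[0,L]}(\Theta_1/\Theta_2)$ is finite and positive (Hopf lemma at the endpoints), and evaluating $-(\gamma\Theta_2-\Theta_1)''$ at a point where the ratio is maximal forces $\gamma\le 1$; by symmetry $\Theta_1\equiv\Theta_2$.

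For the long-time behaviour I would combine comparison and monotonicity. Since $0$ and the constant $M:=\max\{1,\|\theta^0\|_{L^\infty}\}$ are, respectively, a sub- and a supersolution of \eqref{eq:logistic} ordering the parabolic data, $0\le\theta(\cdot,t)\le M$ for all $t>0$. When $L<\pi$, parabolic regularity gives $\theta(\cdot,1)\le A\varphi_1$ for $A$ large enough (both sides vanish linearly at the endpoints), and $Ae^{-(\lambda_1-1)(t-1)}\varphi_1(x)$ is a supersolution on $[1,\infty)$ converging to $0$, so $\theta\to 0$ uniformly. When $L>\pi$ and $\theta^0\not\equiv 0$, the strong maximum principle and the Hopf lemma give $\theta(\cdot,t_0)\ge\varepsilon\varphi_1$ for some small $\varepsilon$ and any $t_0>0$; the solution $\underline\theta$ issued from the subsolution $\varepsilon\varphi_1$ is nondecreasing in time and stays below $\theta(\cdot,\cdot+t_0)$, hence converges --- uniformly by Dini's theorem --- to a positive steady state, necessarily $\Theta$. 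Symmetrically, the solution $\overline\theta$ issued from the supersolution $M$ is nonincreasing, stays above $\theta$, and converges to a steady state that is $\ge\underline\theta\to\Theta>0$, hence again $\Theta$ by uniqueness. Squeezing $\theta$ between $\underline\theta$ and $\overline\theta$ gives $\theta(\cdot,t)\to\Theta$ uniformly.

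The one genuinely delicate point is the critical length $L=\pi$ in part (1): there $\lambda_1=1$, the equilibrium $0$ is merely neutrally stable, and no exponentially decaying supersolution is available. I would close this case either through the gradient-flow structure --- $E[\theta]=\int_0^L\big(\tfrac12\theta_x^2-\tfrac12\theta^2+\tfrac13\theta^3\big)\,dx$ is a strict Lyapunov functional with $\tfrac{d}{dt}E[\theta]=-\int_0^L\theta_t^2\,dx$, the bounded orbit is precompact in $C^0$ by parabolic estimates, and LaSalle's invariance principle confines the $\omega$-limit set to the only steady state, $0$ --- or, more elementarily, through the weighted mass $m(t)=\int_0^L\theta(\cdot,t)\varphi_1\,dx$, which satisfies $m'(t)=-\int_0^L\theta^2\varphi_1\,dx\le 0$: its monotonicity, the resulting space-time integrability of $\theta^2\varphi_1$, and parabolic regularity then upgrade to uniform decay of $\theta$. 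The other ingredient requiring some care, though entirely classical, is the uniqueness of $\Theta$ invoked above.
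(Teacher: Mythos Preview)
Your proof is correct and self-contained. Note, however, that the paper does not actually prove this lemma: it is stated as a classical fact and attributed to the literature (Cantrell--Cosner), so there is no ``paper's own proof'' to compare against. What you have written is precisely the standard argument one would expect behind such a citation --- the eigenvalue dichotomy $\lambda_1=\pi^2/L^2$ versus $1$, monotone iteration between ordered sub/supersolutions for existence, a concavity/sweeping argument for uniqueness, and sandwiching by monotone-in-time solutions for the asymptotics. Your explicit treatment of the critical case $L=\pi$ via the Lyapunov functional (or the weighted mass $m(t)$) is a nice addition, since that case is often glossed over. One minor caveat: as literally stated, part~(2) of the lemma is false for $\theta^0\equiv 0$; you correctly restrict to $\theta^0\not\equiv 0$, which is the intended reading and the only way the lemma is used later in the paper.
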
 

\begin{remark}\label{rmk:multid}
This lemma extends naturally in dimension $n > 1$ to balls of radius $l$, provided that the value $\pi$ is replaced by the value $\ell=2r$, where $r\in \R^+$ is such that  the eigenvalue problem
$$ -\Delta u =\lambda u \ \  \text{in} \ B_r, \qquad u=0 \ \ \text{on} \  \partial B_r, \qquad u>0  \ \ \text{in} \ B_r,$$
admits the eigenvalue $\lambda =1$, $B_r$ being the ball of radius $r$. 
Then, all the results presented in Section \ref{s:main} that depend on the threshold $\pi$ may be adapted in higher dimensions by using the threshold $\ell$.    

Moreover, one can extend Lemma \ref{lemma:logistic} to the case of non-radial domains in higher dimensions. 
To do so, one has to consider a domain $\Omega \subset \R^n$, and study the eigenvalue problem 
$$ -\Delta u =\lambda u \ \  \text{in} \ \Omega, \qquad u=0 \ \ \text{on} \  \partial \Omega, \qquad u>0  \ \ \text{in} \ \Omega.$$
If $\lambda \geq 1$, then the first case of Lemma \ref{lemma:logistic} applies. If not, the second case applies. Then, the statements of Theorem \ref{thm:strong} and of the other results can be re-written by using $\lambda=1$ as a threshold.

\end{remark}

\medskip
\noindent
\textbf{The non-diffusive system.} 
As already mentioned in the introduction, 
ignoring diffusion and space structure, and in the absence of controls, system \eqref{sys:lv} boils down to the Lotka -- Volterra dynamical system \eqref{sys:odes}. 
The behaviour of the solutions of the ODE system \eqref{sys:odes} when $a>1$ is portrayed in Figure \ref{fig:odes}.
When the initial position $(w_1^0,w_2^0)$ is in the blue zone,  $(w_1(t),w_2(t))\to (1,0)$ as $t \to +\infty$; while, when it is in the green zone,  $(w_1(t),w_2(t))\to (0,1)$  as $t \to +\infty$. 
When, instead $a<1$, the equilibrium $(1,0)$ attracts  every trajectory  with initial conditions in $(0,1]\times[0,1]$.

\begin{center}
		\includegraphics[width=0.7\linewidth]{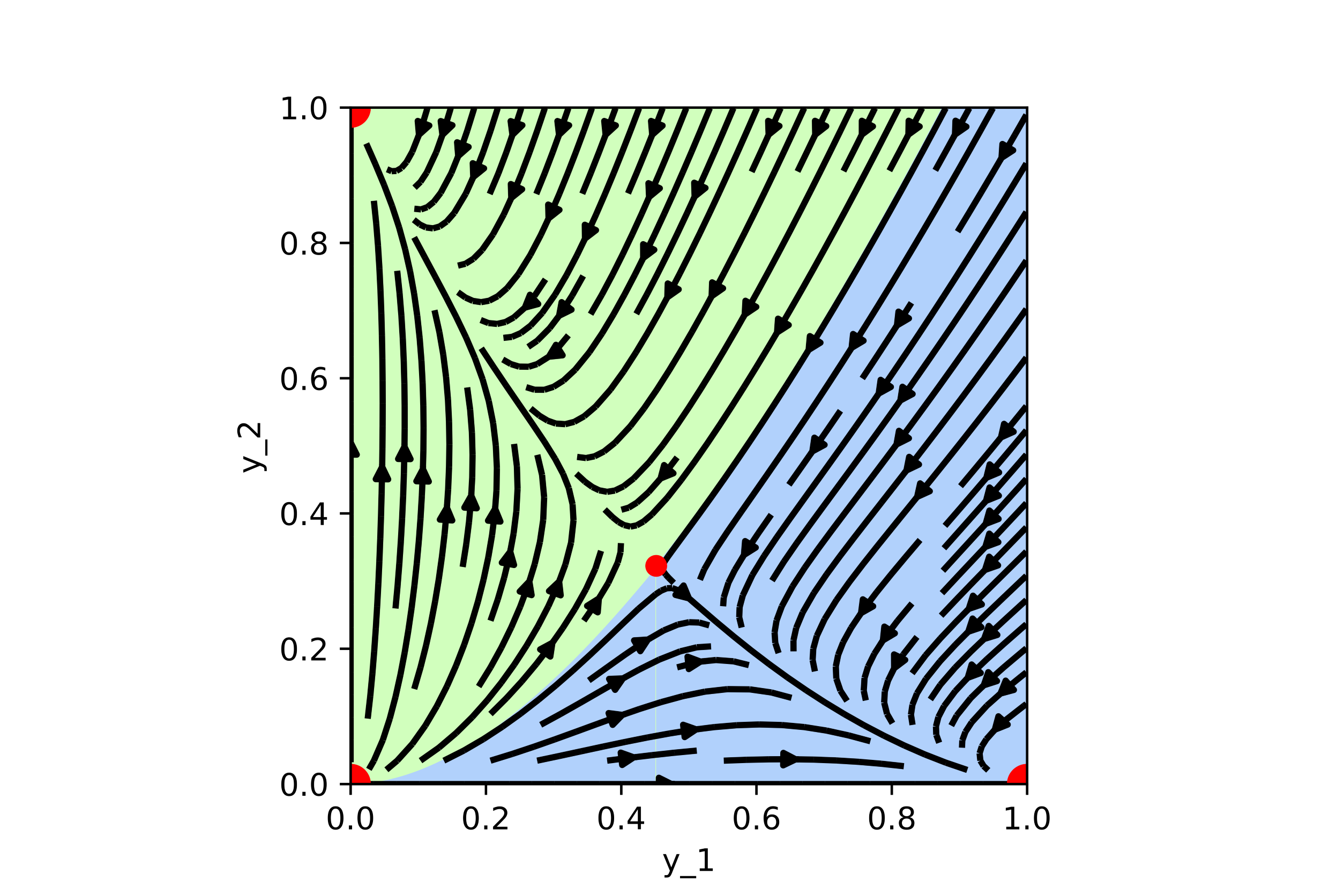}
	\captionof{figure}{\it Phase portrait of the non-diffusive model \eqref{sys:odes} with  $a=1.5$, $b=3.5$. In black, the orbits. The red dots represent the equilibria. The coloured zones represent the basin of attraction of the points: $(1,0)$ in light blue, and $(0,1)$ in light green.}
	\label{fig:odes}
\end{center}
\medskip
\noindent

When the coexistence one $(w_1^*, w_2^*)$ lies inside the square $[0,1] \times [0,1]$, it has a stable manifold that, for the system \eqref{sys:odes}, corresponds to the graph of the function
\begin{equation}\label{separatrix}h(y) = \frac{b - 1}{a - 1} y \qquad \text{for} \quad y \in \left[0, \frac{a - 1}{b - 1} \right].\end{equation}

To prove the forthcoming Theorem \ref{thm:implies}, we recall a result from \cite{iida1998diffusion} and establish a Lemma \ref{lemma:barrier} that relies upon it.
The result in \cite{iida1998diffusion} concerns the behavior of solutions to system \eqref{sys:lv} with Neumann boundary conditions, starting from initial data $(y_1^0, y_2^0)$ lying in the basin of attraction of the equilibrium $(0,1)$. Specifically, this means that for all $x \in (0, L)$, $(y_1^0(x), y_2^0(x))$ lies above the separatrix $h(y_1)$ defined in formula \eqref{separatrix}. The result was originally presented in general dimension $n\in\N^*$, but here we present it in dimension 1 for the sake of coherence of notation with respect to the rest of the paper.

\begin{theorem}[Theorem 1.1 in \cite{iida1998diffusion}]\label{thm:iida1}
		\emph{Let $a>1$ and $h(y)$ be as in \eqref{separatrix}.
	If $y_2^0(x)\geq h(y_1^0(x))$ and $y_2^0(x)\not\equiv h(y_1^0(x))$ in $(0,L)$, then the solution to \eqref{sys:lv} with Neumann boundary conditions
	\begin{equation}
		y_1'(0,t)=	y_2'(0,t) = y_1'(L,t)=	y_2'(L, t)=0 \qquad t>0,
	\end{equation}
	 satisfies $ \underset{t\to +\infty}{\lim} (y_1,y_2)(x,t)= (0,1)$ uniformly in $(0,L)$. 	}
\end{theorem}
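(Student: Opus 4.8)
\emph{Setup and strategy.} Since this is Theorem~1.1 of \cite{iida1998diffusion}, the plan is to run the argument of that reference, built on the competitive (monotone) structure of \eqref{sys:lv}, the comparison principle (Theorem~\ref{thm:cp}), and the invariance of the separatrix line \eqref{separatrix}. The whole proof is organised around the auxiliary quantity $z:=y_2-h(y_1)=y_2-\tfrac{b-1}{a-1}\,y_1$. First I would establish that the solution remains trapped above the separatrix; then, via logistic comparisons, squeeze the $\omega$-limit set into the order interval between $(0,1)$ and the coexistence state $(w_1^*,w_2^*)$; then invoke the convergence theory for monotone parabolic systems to get convergence to a steady state; and finally classify the possible limits, the only survivor being $(0,1)$.

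\emph{Step 1: the $z$-equation and confinement to the basin.} A direct computation gives $f_2(u,v)-\tfrac{b-1}{a-1}f_1(u,v)=z\bigl(1-z+\tfrac{2-a-b}{a-1}u\bigr)$, where $f_1,f_2$ are the reaction terms of \eqref{sys:odes}; in particular this vanishes on $\{z=0\}$, reflecting that the separatrix line is invariant for \eqref{sys:odes}. Hence $z$ solves a \emph{linear} parabolic equation $\partial_t z-\partial_{xx}^2 z=G(x,t)\,z$ with bounded coefficient $G=1-z+\tfrac{2-a-b}{a-1}y_1$, supplemented by \emph{homogeneous Neumann} conditions (since $h$ is linear, $\partial_x z=0$ at $x=0,L$). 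The hypothesis is exactly $z(\cdot,0)\ge 0$, $z(\cdot,0)\not\equiv 0$, so the strong maximum principle and the Hopf lemma yield $z(x,t)>0$ for all $x\in[0,L]$, $t>0$; together with the scalar bounds $0\le y_i\le 1$ this means the range of $(y_1,y_2)(\cdot,t)$ stays in the closed triangle $\Delta$ with vertices $(0,0),(0,1),\bigl(\tfrac{a-1}{b-1},1\bigr)$ — the closure of the basin of $(0,1)$ for \eqref{sys:odes}, a convex invariant region for \eqref{sys:lv} under Neumann conditions.

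\emph{Step 2: a priori squeezing.} For $t>0$ one has $1-y_1-ay_2<1-\tfrac{ab-1}{a-1}y_1=1-y_1/w_1^*$, so $y_1$ is a strict subsolution of the logistic equation with carrying capacity $w_1^*=\tfrac{a-1}{ab-1}$ under Neumann conditions; by the Neumann analogue of Lemma~\ref{lemma:logistic}, $\limsup_{t\to\infty}\max_x y_1(x,t)\le w_1^*$. Feeding $y_1\le w_1^*+\varepsilon$ into the second equation, $y_2$ becomes a supersolution of a logistic equation with carrying capacity $1-b(w_1^*+\varepsilon)=w_2^*-b\varepsilon$, whence $\liminf_{t\to\infty}\min_x y_2(x,t)\ge w_2^*$. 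Also $y_2^0\not\equiv 0$ (otherwise $h(y_1^0)\le 0$ forces $(y_1^0,y_2^0)\equiv(0,0)$, excluded by $y_2^0\not\equiv h(y_1^0)$), so $y_2>0$ for $t>0$ and remains uniformly positive. Thus the $\omega$-limit set lies in the competitive order interval $[(0,1),(w_1^*,w_2^*)]$, and neither $(0,0)$ nor $(1,0)$ belongs to it.

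\emph{Step 3: classification, and the main obstacle.} By the convergence result for monotone parabolic systems \cite{hirsch1983differential,hirsch1988systems}, $(y_1,y_2)(\cdot,t)$ converges uniformly to a Neumann steady state $(\phi,\psi)$ of \eqref{sys:lv} with $0\le\phi\le w_1^*$, $w_2^*\le\psi\le 1$. If $\phi\equiv 0$, then $\psi$ solves the Neumann logistic equation, so $\psi\equiv 0$ or $\psi\equiv 1$; the former is $(0,0)$, excluded, so $(\phi,\psi)=(0,1)$. If $\phi\not\equiv 0$, the strong maximum principle gives $\phi>0$ on $[0,L]$, and if moreover $\psi\equiv h(\phi)$ then $\phi$ solves the Neumann logistic equation with carrying capacity $w_1^*$, hence $\phi\equiv w_1^*$ and $(\phi,\psi)=(w_1^*,w_2^*)$; this is ruled out by a \emph{non-monotone} argument on $z$: along such a trajectory $z\to 0$ and $y_1\to w_1^*$ uniformly, so $G\to\tfrac{(a-1)(b-1)}{ab-1}=:G^*>0$ uniformly, and the spatial mean $\zeta(t):=\tfrac1L\int_0^L z(x,t)\,dx$ satisfies $\dot\zeta=\tfrac1L\int_0^L Gz\,dx\ge\tfrac{G^*}{2}\zeta$ for large $t$ (the Laplacian integrates to zero by the Neumann condition), forcing $\zeta(t)\to+\infty$ and contradicting $0<\zeta(t)\le\max_x z(\cdot,t)\to 0$. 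The remaining sub-case — excluding non-constant \emph{positive} steady states $(\phi,\psi)$ with $\psi\not\equiv h(\phi)$ from the $\omega$-limit set — is the technical heart. Here the naive approach of sandwiching $(y_1,y_2)$ between spatially homogeneous solutions of \eqref{sys:odes} fails, because the ``corner'' $\bigl(\sup_x y_1(\cdot,t),\inf_x y_2(\cdot,t)\bigr)$ of the spatial range need not lie in $\Delta$ even though each value does, and every logistic comparison saturates at the genuine steady state $(w_1^*,w_2^*)$. This last exclusion requires either a non-existence statement for positive Neumann steady states of the elliptic system in the strong-competition regime or the refined monotone-dynamical-systems analysis of \cite{iida1998diffusion}; I would invoke the latter to close the argument.
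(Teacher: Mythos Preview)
The paper does not prove this statement at all: Theorem~\ref{thm:iida1} is explicitly quoted from \cite{iida1998diffusion} in the Preliminaries (Subsection~\ref{ss:preliminars}) as a black-box tool, introduced with the words ``we recall a result from \cite{iida1998diffusion}'', and is then invoked without proof in the arguments for Lemma~\ref{lemma:barrier} and for part~3 of Theorem~\ref{thm:strong}. So there is no ``paper's own proof'' to compare your attempt against.

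That said, your sketch is a faithful reconstruction of the Iida--Muramatsu--Ninomiya--Yanagida argument. Step~1 is correct: the factorisation $f_2-\tfrac{b-1}{a-1}f_1=z\bigl(1-z+\tfrac{2-a-b}{a-1}y_1\bigr)$ checks out, and the strong maximum principle for the linear $z$-equation with Neumann data gives the strict confinement $z>0$. Step~2 is also correct, and your instability argument in Step~3 excluding convergence to $(w_1^*,w_2^*)$ via the spatial mean of $z$ is clean (and indeed $G^*=\tfrac{(a-1)(b-1)}{ab-1}>0$). You are right to flag the remaining sub-case --- ruling out non-constant positive Neumann steady states lying in the order interval $[(0,1),(w_1^*,w_2^*)]$ --- as the genuine technical core that your sketch does not resolve on its own; this is precisely where \cite{iida1998diffusion} does the real work, and your decision to defer to that reference there is appropriate.
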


\subsection{Proof of Theorem \ref{thm:strong}}
	We prove separately each statement.
	\medskip
	
	1.  Let us set
        \begin{equation}\label{1703}
            (u_1, u_2)(x,t)\equiv (1,0) \qquad \text{for} \  x=0, L, \quad t>0.
        \end{equation}
	Recall that $0$ is a subsolution of the first equation in system \eqref{sys:lv} and $1$ is a supersolution for the second equation.
	Hence, by the Maximum Principle for parabolic equations (see \cite{protter2012maximum}), we get that the solution $({y}_1,{y}_2)$ of system \eqref{sys:lv} with initial datum $(y_1^0, y_2^0)$ and boundary control $(u_1, u_2)$ at time $t=1$ satisfies
	\begin{equation*}
	m:=\underset{x\in [0,L]}{\min} \, {y}_1(x,1) >0, \quad M:=\underset{x\in [0,L]}{\max} \, {y}_2(x,1) <1,
	\end{equation*}
	Then, there exists a travelling front $(\alpha,\beta)$  as in formula \eqref{tf} such that 
    \begin{equation*}
        \alpha(x,1) < m \leq {y}_1(x,1), \quad  \beta(x,1)>M \geq {y}_2(x,1) \qquad \text{for} \ x \in (0,L).
    \end{equation*}
	Now, notice that
    \begin{equation*}
       \alpha(x,t) \leq  u_1(x,t), \quad \beta(x,t) \geq u_2(x,t) \qquad \text{for} \ x=0,L, \quad t>1.
    \end{equation*}
	Then, by the Comparison Principle in Theorem \ref{thm:cp},  we have that 
	\begin{equation}\label{1803}
	\alpha(x,t) \leq {y}_1(x,t), \ \beta(x,t) \geq {y}_2(x,t)  \qquad \text{for} \ x\in[0,L], \ t>1.
	\end{equation}
	
	From Theorem 1.1 in \cite{guo2013sign} (stated in this paper as Theorem \ref{thm:guosign}), we have that the speed $c$ of the travelling front is positive, so 
	\begin{equation}\label{1854}
	(\alpha,\beta) \overset{t\to +\infty}{\longrightarrow} (1,0).
	\end{equation}
	By applying the Comparison Principle in Theorem \ref{thm:cp}, using \eqref{1803} and \eqref{1854}, we have that $({y}_1,{y}_2)$ with boundary controls as in \eqref{1703}  satisfies
    \begin{equation*}
	({y}_1,{y}_2) \overset{t\to +\infty}{\longrightarrow} (1,0).
	\end{equation*}
    
	\medskip

	2. Let $L\leq \pi$ and $y(x,t)$ be a solution to the logistic  Dirichlet problem \eqref{eq:logistic} with initial data $y_1^0\in L^{\infty}(0,L)$. By Lemma \ref{lemma:logistic}, the only steady state solution to \eqref{eq:logistic} is the trivial one and $y$ must converge to a steady state solution as $t\to +\infty$. Then,
	\begin{equation}\label{1653}
		y(x,t)\to 0 \qquad \text{as} \ t\to +\infty.
	\end{equation}
	
	For all $(y_1,y_2)$ solution to \eqref{sys:lv} with initial datum $(y_1^0,y_2^0)$ and boundary conditions $(u_1,u_2)=(0,1)$, we have
	\begin{equation*}
	\partial_t y_1-\partial_{xx}^2 y_1 \leq y_1(1-y_1)\ \text{in} \ (0,L), \ \ y_1(0)=y_1(L)=0, \ \ y_1(x,0)=y_1^0 (x)\ \text{in} \ (0,L),
	\end{equation*}
	so that $y_1$ is a subsolution for the logistic equation problem \eqref{eq:logistic}.
	Then, by the Comparison Principle for parabolic equations, we have $y_1(x,t) \leq y(x,t)$ for all $t\geq 0$. In particular, by \eqref{1653}, 
	\begin{equation}\label{1206}
		y_1(x,t)\to 0 \quad \text{as } t\to +\infty.
	\end{equation}

	By the main result of \cite{smillie1984competitive}, every bounded solution of \eqref{sys:lv} converges to a steady state. 
	By \eqref{1206}, we have that $y_2$ converges to the solution $\zeta(x)$ of the steady state problem
	\begin{equation*}
	- \zeta'' = \zeta(1-\zeta) \quad \text{in} \ (0,L), \qquad \zeta(0)=\zeta(L)=1.
	\end{equation*}
	Let us perform the change of variables $z(x):=1-\zeta(x)$. Then, $z$ solves 
	\begin{equation*}
	- z'' = -z(1-z) \quad \text{in} \ (0,L), \qquad z(0)=z(L)=0.
	\end{equation*}
	By multiplying both sides of the latter equation by $z$ and integrating in $(0,L)$, we get
	\begin{equation*}
	0 \leq \left[-z'(x) z(x)\right]\big|_0^L+ \int_{0}^{L} (z'(x))^2 dx = \int_{0}^{L} - z^2(x)(1-z(x)) dx \leq 0. 
	\end{equation*}
	The latter is true only if $z(x)\equiv 0$, which corresponds to $\zeta(x)\equiv 1$. Hence, $(y_1,y_2)$ converges to $(0,1)$ with the boundary control $(u_1,u_2)=(0,1)$, as desired.
	
	\medskip
	
	3. Consider the solution $(z_1,z_2)$ of the diffusive system \eqref{sys:lv} with initial datum $(y_1^0(x),y_2^0(x)) \in B(0,1)$ for all $x\in(0,L)$ and Neumann boundary conditions
	\begin{equation*}
	{\partial_{x} z_1}(x,t)= {\partial_{x} z_2}(x,t)=0, \qquad \text{for} \ x=0,L, \ t>0.
	\end{equation*}
	Notice that we still have $0 \leq z_1, z_2 \leq 1$, thanks to the Comparison Principle.
	By Theorem 1.1 in \cite{iida1998diffusion}, we also have that
	\begin{equation}\label{1257}
	(z_1,z_2) \overset{t\to +\infty}{\longrightarrow} (0,1).
	\end{equation}
	Then, $(z_1,z_2)$ is a solution of the diffusive system \eqref{sys:lv} with initial datum $(y_1^0,y_2^0)$ and boundary control 
	\begin{equation*}
	(\tilde{u}_1, \tilde{u}_2)(x,t) := (z_1,z_2)(x,t) \qquad \text{for} \ x=0,L, \ t>0,
	\end{equation*}
	and it has the desired asymptotic behaviour. Now consider $(y_1, y_2)$ solution of the diffusive system \eqref{sys:lv} with initial datum $(y_1^0,y_2^0)$ and boundary control $(u_1,u_2)(x,t)=(0,1)$ for $x=0, \ L$ and $t>0$. Since $u_1 \leq \tilde{u}_1$ and $u_2 \geq \tilde{u}_2$,  for $x=0, \ L$ and $t>0$, and the initial datum coincides, by the Comparison Principle we have that $y_1 \leq z_1$ and $y_2 \geq z_2$. By this and \eqref{1257}, we get the desired result. 
	
	\medskip
	
	4. 
	The argument for the convergence of $y_1$ to 0 presented in the proof of point 2 can be repeated for both $y_1$ and $y_2$. 
	\medskip

	5. Assume, by contradiction, that there exists a control $(u_1, u_2) \in C(\{0, L\} \times (0, +\infty))$ such that the corresponding solution $(y_1, y_2)$ of system \eqref{sys:lv} satisfies
	\begin{equation}\label{2030}
	(y_1, y_2) \overset{t\to\infty}{\longrightarrow} (0,0).
	\end{equation}
	 Let us call $\zeta :=y_1+y_2$, then $\zeta$ satisfies
	\begin{align*}
	\partial_t \zeta - \partial_{xx}^2 \zeta = \zeta(1-\zeta) -(a+b-2)y_1y_2 \ \ &\text{on} \ (0,L), \\ \zeta= u_1+u_2 \ \ &\text{on} \ \{0,L\}\times [0,+\infty).
	\end{align*}
	If $a+b-2 \leq 0$, then
	\begin{equation}\label{1217}
		\partial_t \zeta - \partial_{xx}^2 \zeta \geq \zeta(1-\zeta).
	\end{equation}
    So, $\zeta$ is a supersolution for the logistic equation \eqref{eq:logistic} with $\theta(x,0)=y_1^0(x)+y_2^0(x)$.
	Notice that, for $L>\pi$, by Lemma \ref{lemma:logistic}, \eqref{eq:logistic}  has a positive attractive steady state solution $\Theta$. 
	Being $\zeta$ a supersolution for \eqref{eq:logistic} and by \eqref{2030}, we have that
	\begin{equation*}
	0 = \underset{t\to+\infty}{\lim}  \zeta(x,t) \geq \Theta(x) \quad \text{in} \ (0,L),
	\end{equation*}
	which is impossible. Hence, it is not possible to control any initial datum to $(0,0)$ if $a+b-2 \leq 0$.
    
	Let us now show the same for $a+b-2>0$. Since $$ y_1 y_2 \leq \frac{(y_1+y_2)^2}{4},$$
	the function $\zeta$ satisfies 
	\begin{equation*}
	\partial_t \zeta - \partial_{xx}^2 \zeta \geq \zeta \left( 1- \frac{a+b+2}{4} \zeta  \right)\quad \text{on} \ (0,L), \quad \zeta(0)\geq 0, \ \zeta(L) \geq 0.
	\end{equation*}
	So, the function
	\begin{equation*}
	\sigma (x, t) := \frac{a+b+2}{4} \zeta (x,t)
	\end{equation*}
	is a supersolution for the logistic equation \eqref{eq:logistic} with $$\theta(x,0)=\dfrac{a+b+2}{4} (y_1^0(x)+y_2^0(x)).$$
	Notice that, for $L>\pi$, by Lemma \ref{lemma:logistic}, \eqref{eq:logistic}  has a positive attractive steady state solution $\Theta$. 
	Being $\sigma$ a supersolution for \eqref{eq:logistic} and by \eqref{2030}, we have that
	\begin{equation*}
	0 =  \frac{a+b+2}{4} \underset{t\to+\infty}{\lim}  \zeta(x,t) \geq \Theta(x) \quad \text{in} \ (0,L),
	\end{equation*}
	which is impossible. Therefore, even in the case $a + b - 2 > 0$, it is not possible to steer an arbitrary initial datum to $(0,0)$. This concludes the proof of the theorem.

\subsection{Properties of the barriers}

In this subsection, we present several useful results. In particular, we establish that the existence of barriers implies non-controllability, demonstrate the monotonicity of barriers, and, for $a > 1$, prove their continuous dependence on the system parameters.

\subsubsection{Existence of barriers implies non controllability}

In this subsection, we prove Theorem \ref{thm:implies}, which will be instrumental in the proofs of both Theorems \ref{thm:b} and \ref{thm:a}.
The theorem claims that the existence of barriers implies the non-controllability to the steady state $(0,1)$ and to the coexistence steady state $(w_1^*, w_2^*)$, whenever it exists.

We first state and prove the following technical lemma:

\begin{lemma}\label{lemma:barrier}
	\emph{If $a > 1$ (i.e., if $(w_1^*,w_2^*)$ is  in the range $[0,1]\times[0,1]$), then any solution $(\phi, \psi)$ to system \eqref{sys:barrier} satisfies the following property: there exists a point $x^* \in (0, L)$ such that at least one of the following inequalities holds:
\begin{equation*}
\phi(x^*)> w_1^* \quad \text{or} \quad \psi(x^*)< w_2^*.
\end{equation*}
}
\end{lemma}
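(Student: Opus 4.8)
The plan is to argue by contradiction. I would assume the conclusion fails, i.e. that $\phi(x)\le w_1^*$ and $\psi(x)\ge w_2^*$ hold \emph{simultaneously at every} $x\in(0,L)$, and derive a contradiction with the defining property $\phi>0$. The engine of the argument is Iida's theorem (Theorem \ref{thm:iida1}): I will feed the barrier profile $(\phi,\psi)$ as initial datum to the Lotka--Volterra system with homogeneous \emph{Neumann} boundary conditions, use Theorem \ref{thm:iida1} to conclude that this Neumann evolution $(\tilde y_1,\tilde y_2)$ converges to $(0,1)$, and then transport this decay back onto the time-independent solution $\phi$ by a comparison argument.

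First I would check that the contradiction hypothesis forces $(\phi,\psi)$ to lie weakly above the separatrix of the ODE system. Using the explicit formulas $w_1^*=\frac{a-1}{ab-1}$, $w_2^*=\frac{b-1}{ab-1}$ and $h(y)=\frac{b-1}{a-1}\,y$ (defined on $[0,\frac{a-1}{b-1}]$), and the fact that $a>1$ gives $w_1^*=\frac{a-1}{ab-1}<\frac{a-1}{b-1}$ (so $\phi(x)\le w_1^*$ lies in the domain of $h$), one obtains $h(\phi(x))=\frac{b-1}{a-1}\phi(x)\le\frac{b-1}{a-1}w_1^*=w_2^*\le\psi(x)$ for all $x\in(0,L)$. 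Since $\phi\to0$ and $\psi\to1$ at the endpoints, the inequality $\psi\ge h(\phi)$ is strict near the boundary, so $\psi\not\equiv h(\phi)$ on $(0,L)$ and the hypotheses of Theorem \ref{thm:iida1} are met. This yields $(\tilde y_1,\tilde y_2)(x,t)\to(0,1)$ uniformly as $t\to+\infty$, where $(\tilde y_1,\tilde y_2)$ solves \eqref{sys:lv} with homogeneous Neumann conditions and datum $(\phi,\psi)$; note that $0\le\tilde y_1,\tilde y_2\le1$ by comparison with the constant solutions, exactly as in the proof of Theorem \ref{thm:strong}(3).

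The final step is the comparison. Both $(\phi,\psi)$, viewed as a time-independent solution, and $(\tilde y_1,\tilde y_2)$ solve system \eqref{sys:lv} in $(0,L)\times\R^+$ with the same initial datum, and on the lateral boundary $x\in\{0,L\}$ one has $\phi\equiv0\le\tilde y_1$ and $\tilde y_2\le1\equiv\psi$. I would then apply the Comparison Principle (Theorem \ref{thm:cp}) with $(\underline y_1,\underline y_2)=(\phi,\psi)$ and $(\overline y_1,\overline y_2)=(\tilde y_1,\tilde y_2)$ to get $\phi(x)\le\tilde y_1(x,t)$ for all $x\in(0,L)$ and $t\ge0$; letting $t\to+\infty$ gives $\phi\le0$ in $(0,L)$, contradicting $\phi>0$. (One could equally work with the second component: $\tilde y_2\le\psi$ together with $\tilde y_2\to1$ would force $\psi\ge1$ in $(0,L)$, again impossible.)

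I do not expect a serious obstacle: the one genuine idea is to route through the Neumann problem so that Iida's convergence result becomes applicable, after which Theorem \ref{thm:cp} does the rest. The points requiring care are all in the first step, namely verifying that $\phi$ stays within the interval on which the separatrix $h$ is defined and that $\psi\ge h(\phi)$ is not an identity; both rely on the standing assumptions $a>1$ and $b>a$.
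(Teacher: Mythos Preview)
Your argument is correct and follows essentially the same route as the paper's own proof: contradiction hypothesis $\Rightarrow$ $(\phi,\psi)$ lies weakly above the separatrix $\Rightarrow$ Theorem~\ref{thm:iida1} gives convergence of the Neumann evolution to $(0,1)$ $\Rightarrow$ comparison against the stationary barrier yields $\phi\le0$. The paper phrases the comparison step by reinterpreting the Neumann solution as a Dirichlet solution with its own trace as boundary control, while you invoke Theorem~\ref{thm:cp} directly with $(\phi,\psi)$ as subsolution and $(\tilde y_1,\tilde y_2)$ as supersolution; these are the same manoeuvre, and your additional check that $\phi\le w_1^*$ lies in the domain of $h$ is a nice explicit detail the paper leaves implicit.
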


This lemma indicates that the barrier cannot lie entirely below the threshold defined by the coexistence equilibrium.

Let us now prove Lemma \ref{lemma:barrier}. We will make use of the Theorem \ref{thm:iida1} from \cite{iida1998diffusion}, which we recalled in the Preliminaries Subsection.

\begin{proof}[Proof of Lemma \ref{lemma:barrier}]
	Let us argue by contradiction and assume that the statement is false. That is, assume that, for all $x \in (0, L)$, the following holds:
	\begin{equation*}
	\phi(x) \leq w_1^* \quad \text{and} \quad \psi(x)\geq w_2^*.
	\end{equation*}
    Then, it holds that $\psi(x)\geq w_2^* = h(w_1^*) \geq h(\phi(x))$ for all $x\in (0,L)$, with $h$ as in \eqref{separatrix}. Moreover, $1 =\psi(0) > h(\phi(0))=0$, so $\psi(x) \not \equiv h(\phi(x))$ for $x\in (0,L)$.
	Then, by Theorem \ref{thm:iida1} (Theorem 1.1 in \cite{iida1998diffusion}), the solution $(y_1, y_2)$ of system \eqref{sys:lv} with Neumann boundary conditions and initial data $(\phi, \psi)$ satisfies
	\begin{equation}\label{1439}
	(y_1, y_2)(x,t) \overset{t\to+\infty}{\longrightarrow} (0,1),
	\end{equation}	
	uniformly in $(0,L)$.
	
	We take the couple of time-dependent controls $(u_1, u_2)$ as the trace of the solution $(y_1,y_2)$ to the boundary $x=0$ and $x=L$. Thanks to the bounds \eqref{hyp:bounds}, it holds
	\begin{equation*}
	u_1(x,t) \geq 0, \ u_2(x,t) \leq 1, \qquad \text{for} \ x=0,L, \ t>0.
	\end{equation*}
	Then, by the Comparison Principle, it follows that 
	\begin{equation*}
	y_1(x,t) \geq \phi(x), \ y_2(x,t) \leq \psi(x), \qquad \text{for all} \ x\in(0,L), \ t>0,
	\end{equation*}	
	which is in contradiction with \eqref{1439}.
This completes the proof of the lemma.
	
\end{proof}

With Lemma \ref{lemma:barrier}, we are able to prove Theorem \ref{thm:implies}:

\begin{proof}[Proof of Theorem \ref{thm:implies}]
	Let $(\phi, \psi)$ be a barrier, i.e., a solution to system \eqref{sys:barrier}, for the parameters $\bar{a}$, $\bar{b}$ and $\bar{L}$.
	Let us consider an initial datum $(y_1^0,y_2^0)\in \mathcal{I}_{\phi,\psi}$, that is,  such that $y_1^0(x) \geq \phi(x)$ and $y_2^0 (x)\leq \psi(x)$ for all $x\in(0,\bar{L})$. 
	Let $(y_1, y_2)$ be the solution issued from $(y_1^0, y_2^0)$ with any boundary controls $(u_1, u_2)$.	
	Notice that, by the hypothesis on the problem stated in \eqref{hyp:bounds}, we have that
	\begin{align*}
		u_1(x,t) &\geq 0, \quad \text{for all} \ x \in (0,\bar{L}), \ t>0, \\
		u_2(x,t) &\leq 1, \quad \text{for all} \ x \in (0,\bar{L}), \ t>0.
	\end{align*}
	
	Thus, by applying the Comparison Principle as stated in Theorem  \ref{thm:cp} to $(y_1,y_2)$ and $(\phi, \psi)$, we get that
	\begin{align*}
		y_1(x,t) &\geq \phi(x) > 0, \quad \text{for all} \ x \in (0,\bar{L}), \ t>0, \\
		y_2(x,t) &\leq \psi(x) < 1, \quad \text{for all} \ x \in (0,\bar{L}), \ t>0.
	\end{align*}
	In particular,
$(y_1, y_2)$ cannot converge to $(0,1)$ as $t\to +\infty$.

	Moreover, if $\bar{a}>1$, by Lemma \ref{lemma:barrier}, there exists $x^*\in(0, \bar{L})$ such that either $\phi(x^*)>w_1^*$ or $\psi(x^*)<w_2^*$. Let us suppose  that the latter is true, the other case being analogous. Then, again by the Comparison Principle of Theorem \ref{thm:cp}, we have that
	$$ y_2(x^*,t) \leq \psi(x^*) < w_2^*, \quad \text{for all} \ t>0. $$
	Thus,
		$(y_1, y_2)$ cannot converge to $(w_1^*,w_2^*) $ as $t\to +\infty.$
		\end{proof}

\subsubsection{Monotonicity with respect to $a$, $b$, and $L$}

The existence of a barrier exhibits monotonicity with respect to the parameters $L$ and $b$, as stated in the following proposition.
 This will be crucial to prove the threshold phenomena in Theorems \ref{thm:b} and \ref{thm:a}.

\begin{proposition}\label{prop:mono}
	\emph{Suppose there exists a nontrivial solution to system \eqref{sys:barrier} for some parameters $\bar{a}$, $\bar{b}$, $\bar{L}$.
	The following holds:
	\begin{enumerate}
		\item For all $L\geq \bar{L}$ there exists a solution to system \eqref{sys:barrier} for the parameters $\bar{a}$, $\bar{b}$, and $L$.
		\item For all $b\geq \bar{b}$, there exists a solution to system \eqref{sys:barrier} for the parameters $\bar{a}$, $\bar{L}$, and $b$.
		\item For all $a\leq \bar{a}$, there exists a solution to system \eqref{sys:barrier} for the parameters $a$, $\bar{L}$, and $\bar{b}$.
	\end{enumerate}	}
\end{proposition}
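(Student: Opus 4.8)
The plan is to handle the three items uniformly via the method of sub- and supersolutions, exploiting the competitive monotone structure recalled in Subsection~\ref{ss:preliminars}. In each case I will exhibit, for the target parameters, a (generalised) stationary subsolution $(\phi_\sharp,\psi_\sharp)$ of \eqref{sys:lv} with boundary values $(0,1)$ and with $\phi_\sharp\not\equiv 0$, and then let the competitive dynamics (equivalently, an elliptic monotone iteration) push it up to a steady state which turns out to solve \eqref{sys:barrier}.

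\emph{Building the subsolution.} For items~2 and~3 the given barrier already works, thanks to the signs. If $(\bar\phi,\bar\psi)$ solves \eqref{sys:barrier} for $(\bar a,\bar b,\bar L)$, then for $b\geq\bar b$ one has $\bar\psi(1-b\bar\phi-\bar\psi)\leq \bar\psi(1-\bar b\bar\phi-\bar\psi)=-\bar\psi''$ (since $\bar\phi,\bar\psi>0$), while the $\phi$-equation is untouched; hence $(\bar\phi,\bar\psi)$ is a stationary subsolution of \eqref{sys:lv} for $(\bar a,b,\bar L)$. Likewise, for $a\leq\bar a$ one has $\bar\phi(1-\bar\phi-a\bar\psi)\geq \bar\phi(1-\bar\phi-\bar a\bar\psi)=-\bar\phi''$ and the $\psi$-equation is untouched, so $(\bar\phi,\bar\psi)$ is a stationary subsolution for $(a,\bar b,\bar L)$. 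For item~1, given $L\geq\bar L$, I extend the barrier by the trivial solution: set $(\phi_\sharp,\psi_\sharp)=(\bar\phi,\bar\psi)$ on $[0,\bar L]$ and $(\phi_\sharp,\psi_\sharp)\equiv(0,1)$ on $[\bar L,L]$. Since $(0,1)$ is itself a (sub- and super-)solution and $\bar\phi(\bar L)=0$, $\bar\psi(\bar L)=1$, the gluing rule \eqref{eq:gensub1}--\eqref{eq:gensub2} applied on $(x_1,x_2)=(0,\bar L)$ shows that $(\phi_\sharp,\psi_\sharp)$ is a generalised subsolution on $(0,L)$ with boundary data $(0,1)$; the derivative of $\phi_\sharp$ has an upward jump at $\bar L$ and that of $\psi_\sharp$ a downward jump, consistent with the subsolution inequalities in the distributional sense. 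In all three cases $\phi_\sharp\not\equiv 0$, indeed $\phi_\sharp\geq\bar\phi>0$ on $(0,\bar L)$.

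\emph{Pushing up to a barrier.} Let $(y_1,y_2)$ solve \eqref{sys:lv} for the target parameters with time-independent boundary control $(u_1,u_2)\equiv(0,1)$ and initial datum $(\phi_\sharp,\psi_\sharp)$. Viewing $(\phi_\sharp,\psi_\sharp)$ also as a time-independent generalised subsolution, the Comparison Principle (Theorem~\ref{thm:cp}) gives $y_1(\cdot,t)\geq\phi_\sharp$ and $y_2(\cdot,t)\leq\psi_\sharp$ for all $t\geq0$; comparing $(y_1,y_2)(\cdot,t)$ with the time-shifted solution $(y_1,y_2)(\cdot,t+h)$ --- whose initial datum dominates that of the former in the competitive order --- shows that $y_1(\cdot,t)$ is nondecreasing and $y_2(\cdot,t)$ nonincreasing in $t$; and the genuine supersolution $(1,0)$, which lies above $(\phi_\sharp,\psi_\sharp)$ in the competitive order, forces $0\leq y_1\leq 1$ and $0\leq y_2\leq 1$. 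A bounded monotone orbit of this competitive parabolic system converges (see \cite{hirsch1983differential, smillie1984competitive}) to a steady state $(\phi_\infty,\psi_\infty)$ of \eqref{sys:lv} with boundary data $(0,1)$, and $\phi_\infty\geq\phi_\sharp\not\equiv 0$. The strong maximum principle then gives $\phi_\infty>0$ and $\psi_\infty>0$ in $(0,L)$ (a nonnegative solution of the $\psi$-equation vanishing at an interior point would vanish identically, contradicting $\psi_\infty\equiv 1$ on the boundary); and if $\phi_\infty$ or $\psi_\infty$ equalled $1$ at an interior maximum, the corresponding equation would read $0\leq -\phi_\infty''=-a\psi_\infty<0$ (resp.\ $0\leq-\psi_\infty''=-b\phi_\infty<0$), a contradiction. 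Hence $0<\phi_\infty,\psi_\infty<1$ in $(0,L)$ and $(\phi_\infty,\psi_\infty)$ solves \eqref{sys:barrier}: it is a barrier for the target parameters.

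\emph{Expected main obstacle.} The sign computations and the maximum-principle steps are routine; the delicate point is the convergence of the bounded monotone orbit to a steady state, which requires precompactness of the trajectory (parabolic regularity) and the identification of the $\omega$-limit set with equilibria. This is standard for strongly monotone parabolic systems --- precisely the kind of result already invoked in this paper --- so I expect to import rather than reprove it. A self-contained alternative bypasses the dynamics: run the elliptic monotone iteration directly between the subsolution $(\phi_\sharp,\psi_\sharp)$ and the supersolution $(1,0)$ to obtain $(\phi_\infty,\psi_\infty)$, then conclude with the same maximum-principle arguments; the only mild subtlety there, in item~1, is that $(\phi_\sharp,\psi_\sharp)$ is merely a generalised (weak) subsolution, which is handled by a harmless mollification or by the standard weak sub/supersolution framework.
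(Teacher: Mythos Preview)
Your proposal is correct and follows essentially the same route as the paper: in each case build a (generalised) stationary subsolution with boundary data $(0,1)$ --- the original barrier for items~2 and~3, and its extension by $(0,1)$ on $[\bar L,L]$ for item~1 --- then run the parabolic flow with boundary control $(0,1)$, bound it below by the subsolution via Theorem~\ref{thm:cp}, and invoke the convergence result \cite{smillie1984competitive} to obtain a nontrivial steady state. You supply somewhat more detail than the paper (the monotonicity-in-time argument, the strong maximum principle step to verify the strict inequalities $0<\phi_\infty,\psi_\infty<1$, and the remark on the derivative jumps at $\bar L$), but the architecture is identical.
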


\begin{proof}
	Suppose there are some values $\bar{a}>0$, $\bar{b}>1$ and $\bar{L}>\pi$ for which there exists a nontrivial solution $(\bar{\phi}, \bar{\psi})$ to system \eqref{sys:barrier}.
	
	1. Let us take some $L>\bar{L}$ and consider the pair $(\sigma, \omega)$ on $(0,L)$ defined as
	\begin{equation*}\label{def:phi}
	\sigma(x)=
	\left\{
	\begin{array}{ll}
	\bar{\phi}(x), & \text{for} \ x\in(0,\bar{L}), \\
	0,  & \text{for} \ x\in(\bar{L},L),
	\end{array}\right.
	\end{equation*}
	and
	\begin{equation*}\label{def:psi}
	\omega(x)=
	\left\{
	\begin{array}{ll}
	\bar{\psi}(x), & \text{for} \ x\in(0,\bar{L}), \\
	1,  & \text{for} \ x\in(\bar{L},L).
	\end{array}\right.
	\end{equation*}
	We now show that the couple $(\sigma, \omega)$ is a subsolution to system \eqref{sys:barrier} for the parameters $\bar{a}$, $\bar{b}$ and $L$. First of all,  $\sigma$ and $\omega$ satisfy the boundary conditions of \eqref{sys:barrier}. Then, $\sigma$ and $\omega$ are continuous since $\bar{\phi}(\bar{L})=0$ and $\bar{\psi}(\bar{L})=1$. Moreover,
	\begin{equation*}
	\sigma(x)=\max \{0, \bar{\phi}(x)\}, \ \omega(x)=\min \{1, \bar{\psi}(x)\}, \qquad \text{for} \ x \in (0, \bar{L}), 
	\end{equation*} 
	so that $\sigma$ is a generalised subsolution for the first equation and $\omega$ is a generalised supersolution for the second equation for system \eqref{sys:lv}.
	Then, by  \eqref{eq:gensub1} and \eqref{eq:gensub2}, $(\sigma, \omega)$ is a generalised subsolution for system \eqref{sys:lv}.
	
	So, the solution $(y_1, y_2)$ of system \eqref{sys:lv} issued from the initial datum $(\sigma, \omega)$ and with boundary conditions $(u_1,u_2)\equiv (0,1)$ must satisfy,  by the Comparison Principle, $ y_1(x,t)\geq \sigma(x)$, $ y_2(x,t)\leq \omega(x)$ for all $x\in(0,L)$ and $t\geq 0$.
Then, by the classical result of \cite{smillie1984competitive}, it follows that $(y_1, y_2)$ converges to a nontrivial steady state $(\phi, \psi)$, which is a solution of system \eqref{sys:barrier}.
	
	\medskip
	
	2. Let us take $b>\bar{b}$. We observe that 
	\begin{equation*}
	\left\{
	\begin{array}{ll}
	-  {\bar{\phi}}'' = \bar{\phi}(1 - \bar{\phi} - \bar{a} \bar{\psi}), & \text{for} \ x\in(0,\bar{L}), \\
	- {\bar{\psi}}'' \geq  \bar{\psi}(1 - b \bar{\phi} - \bar{\psi}), & \text{for} \ x\in(0,\bar{L}), \\
	\bar{\phi}(x)=0, \ \bar{\psi}(x)=1,  & \text{for} \ x=0, \bar{L}.
	\end{array}\right.
	\end{equation*}
	Hence, $(\bar{\phi},\bar{\psi})$ is a subsolution of \eqref{sys:barrier} with parameters $\bar{a}$ and $b$. 
	
	Then, the solution $(y_1, y_2)$ of system \eqref{sys:lv} issued from the initial datum $(\bar{\phi},\bar{\psi})$ and with boundary conditions $(u_1,u_2)\equiv (0,1)$ satisfies $ y_1(x,t)\geq \bar{\phi}(x)$, $ y_2(x,t)\leq \bar{\psi}(x)$ by the Comparison Principle.
	As before, thanks to \cite{smillie1984competitive}, $(y_1,y_2)$ converges to a nontrivial solution $(\phi,\psi)$ of system \eqref{sys:barrier}.
	
	\medskip
	
	3. We observe that for $a<\bar{a}$ we get
	\begin{equation*}
	\left\{
	\begin{array}{ll}
	-  {\bar{\phi}}'' = \bar{\phi}(1 - \bar{\phi} - \bar{a} \bar{\psi}) \leq \bar{\phi}(1 - \bar{\phi} - a \bar{\psi}) , & \text{for} \ x\in(0,\bar{L}), \\
	- {\bar{\psi}}'' =  \bar{\psi}(1 - \bar{b} \bar{\phi} - \bar{\psi}), & \text{for} \ x\in(0,\bar{L}), \\
	\bar{\phi}(x)=0, \ \bar{\psi}(x)=1,  & \text{for} \ x=0, \bar{L}.
	\end{array}\right.
	\end{equation*}
We apply the same argument as in the previous step to conclude the proof of the lemma.
 
\end{proof}

\subsubsection{Continuous dependence on $a$ and $b$}

In the following proposition, we show that the existence of a barrier depends  on $a$ and $b$ in a continuous way when $a>1$. This result plays an important role in determining the limit cases $a^*$ and $b^*$ in Theorems \ref{thm:a} and \ref{thm:b}.

\begin{proposition}\label{prop:dependence}
  \emph{  Let us consider a sequence $\{ (a_n,b_n) \}_{n\in\N}$ such that each couple satisfies \eqref{hyp:sc} and let us suppose that system \eqref{sys:barrier} has a solution for the parameters $a_n$ and $b_n$ for all $n\in \N$. Let us also suppose that $(a_n, b_n)\to (a,b)$ with $a$ and $b$ respecting \ref{hyp:sc} and $a > 1$. Then, system \eqref{sys:barrier} has a solution for the parameters $a$ and $b$.}
\end{proposition}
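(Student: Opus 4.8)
The plan is to extract a limiting stationary solution from the sequence of barriers $(\phi_n,\psi_n)$ associated with $(a_n,b_n)$ by a compactness argument, and then to show this limit is again a barrier; the only failure mode is that the limit collapses onto the trivial pair $(0,1)$, and ruling this out is where the hypothesis $a>1$ will be used, through Lemma~\ref{lemma:barrier}. For the compactness step I would first observe that, since $0<\phi_n,\psi_n<1$ and $(a_n,b_n)$ is a convergent, hence bounded, sequence, the right-hand sides of \eqref{sys:barrier} are uniformly bounded in $L^\infty(0,L)$. Writing $\psi_n=1+\tilde\psi_n$ with homogeneous Dirichlet data and invoking the one-dimensional Dirichlet Green's function (or elementary elliptic regularity), one obtains uniform bounds for $\phi_n$ and $\psi_n$ in $C^{1,1}([0,L])$; Arzel\`a--Ascoli then yields a subsequence with $\phi_n\to\phi$, $\psi_n\to\psi$ in $C^1([0,L])$. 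Since the right-hand sides then converge uniformly, so do $\phi_n''$ and $\psi_n''$, so $(\phi,\psi)$ is a classical solution of the stationary system with parameters $(a,b)$, with $0\le\phi,\psi\le1$ and the boundary values $\phi(0)=\phi(L)=0$, $\psi(0)=\psi(L)=1$.

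Next I would argue that $(\phi,\psi)\not\equiv(0,1)$ --- the one bad case, because if $\phi\equiv0$ then $\psi$ solves $-\psi''=\psi(1-\psi)$ with $\psi\equiv1$ on the boundary, and the energy identity used in the proof of Theorem~\ref{thm:strong}(2) forces $\psi\equiv1$. For $n$ large we have $a_n>1$, so Lemma~\ref{lemma:barrier} gives a point $x_n^*\in(0,L)$ at which $\phi_n(x_n^*)>w_{1,n}^*$ or $\psi_n(x_n^*)<w_{2,n}^*$, where $(w_{1,n}^*,w_{2,n}^*)=\bigl(\tfrac{a_n-1}{a_nb_n-1},\tfrac{b_n-1}{a_nb_n-1}\bigr)\to(w_1^*,w_2^*)$ with $w_1^*>0$, $w_2^*<1$. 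The crucial point is that the uniform $C^1$ bound keeps $x_n^*$ away from the endpoints: from $\phi_n(0)=\phi_n(L)=0$, $|\phi_n'|\le C$ we get $\phi_n(x_n^*)\le C\min\{x_n^*,L-x_n^*\}$, and from $\psi_n(0)=\psi_n(L)=1$ we get $\psi_n(x_n^*)\ge1-C\min\{x_n^*,L-x_n^*\}$, so in either alternative $\min\{x_n^*,L-x_n^*\}\ge\delta$ for some $\delta>0$ and all $n$ large. Passing to a further subsequence along which the same alternative holds and $x_n^*\to x^*\in[\delta,L-\delta]$, I would conclude $\phi(x^*)\ge w_1^*>0$ or $\psi(x^*)\le w_2^*<1$; with the remark above, this gives $(\phi,\psi)\not\equiv(0,1)$.

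Finally I would check the strict interior inequalities $0<\phi,\psi<1$ on $(0,L)$. Since $\phi\ge0$ solves a linear elliptic equation and $\phi\not\equiv0$, the strong maximum principle gives $\phi>0$; likewise $\psi>0$ (as $\psi\not\equiv0$ by its boundary data). For the upper bounds, writing $w=1-\phi\ge0$, $z=1-\psi\ge0$, which satisfy $-w''=a\psi(1-w)-w(1-w)$ and $-z''=b\phi(1-z)-z(1-z)$, an interior zero of $w$ (resp.\ of $z$) would force $-w''=a\psi>0$ (resp.\ $-z''=b\phi>0$) there, contradicting the sign required at an interior minimum; hence $\phi,\psi<1$ on $(0,L)$ and $(\phi,\psi)$ is a barrier for $(a,b)$. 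The main obstacle is the nontriviality step of the middle paragraph: compactness by itself only produces a stationary solution with the correct boundary data, and without $a>1$ nothing rules out degeneration to $(0,1)$; the subtle technical point is precisely that the uniform gradient bound is what prevents the witnessing points $x_n^*$ from sliding into the boundary layer, where the conclusion of Lemma~\ref{lemma:barrier} would be vacuous.
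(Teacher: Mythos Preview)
Your proof is correct and follows essentially the same strategy as the paper's: uniform $L^\infty$ bounds on $(\phi_n,\psi_n)$ and $(a_n,b_n)$ give uniform second-derivative bounds, hence compactness and a classical limiting solution; nontriviality of the limit is then obtained from Lemma~\ref{lemma:barrier}. The only noteworthy differences are in execution: the paper argues directly that $\limsup_n \sup_{(0,L)}\phi_n\ge \frac{a-1}{ab-1}>0$ (or the analogous bound for $\psi_n$) and combines this with the uniform $\phi_n''$ bound to preclude $\phi_n\to0$, whereas you track the witnessing points $x_n^*$ themselves and use the $C^1$ bound to keep them away from $\partial(0,L)$ --- both work. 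You also explicitly verify the strict interior inequalities $0<\phi,\psi<1$, which are part of the definition of a barrier in \eqref{sys:barrier}; the paper's proof omits this step.
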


\begin{proof}
    First of all, since $a>1$, up to a subsequence, we can suppose 
    \begin{equation}\label{1229}
        a_n \in (a-1, a+1)\cap (1, +\infty), \qquad b_n\in (b-1,b+1).
    \end{equation}
    Let us denote by $(\phi_{n}, \psi_{n})$ the solution of system \eqref{sys:barrier} for the parameters $a_n$ and $b_n$. 
    Then, by the last line of \eqref{sys:barrier} we have that 
\begin{equation*}
    \underset{[0,L]}{\sup} \phi_n , \  \underset{[0,L]}{\sup}  \psi_n  \leq 1
\end{equation*}
    for all $n\in \N$.
 Then, 
 \begin{equation}\label{1248}
    \underset{(0,L)}{\sup} \phi_n'' , \  \underset{(0,L)}{\sup}  \psi_n''  \leq 3+a+b
\end{equation}
thanks to \eqref{1229}. 
Then $(\phi_n, \psi_n)$ is compact in $\mathcal{C}^{1,\alpha}(0,L)$, that is, up to a subsequence there exists $(\phi, \psi)$ such that $(\phi_n, \psi_n)\to (\phi, \psi)$ in $\mathcal{C}^{1,\alpha}$. In particular, passing to the limit in system \eqref{sys:barrier}, we get that $\phi(0)=\phi(L)=0$ and $\psi(0)=\psi(L)=1$. Moreover, $\phi$ and $\psi$ satisfy the equations in \eqref{sys:barrier} in a weak sense. Then, by Schauder estimates, we get that $\phi$ and $\psi$ satisfy the equations in \eqref{sys:barrier} in a classical sense.

Now, we show that $(\phi,\psi)\neq (0,1)$. Let us define 
\begin{equation*}
     \left(w_{n,1}^*, w_{n,2}^* \right) = \left( \frac{a_n-1}{a_n b_n -1}, \frac{b_n-1}{a_n b_n -1}\right).
\end{equation*}
Thanks to \eqref{1229}, by Lemma \ref{lemma:barrier} we get that for all $n\in\N$ there exists $x_n\in (0,L)$ such that either $\phi_n(x_n) > w_{n,1}^*$ or $\psi_n(x_n) < w_{n,2}^*$. Let us suppose that the first case happens for infinitely many different values of $n$. Then, 
\begin{equation*}
    \underset{n\to +\infty}{\lim} \underset{(0,L)}{\sup} \phi_n \geq \frac{a-1}{ab-1} >0.
\end{equation*}
This, together with \eqref{1248}, implies that $\phi_n$ cannot converge to $0$. Consequently, $\psi_n$ cannot converge to $1$.

If the inequality $ \phi_n(x_n) > w_{n,1}^*$ fails to hold for infinitely many $ n \in \mathbb{N}$, then  $\psi_n(x_n) < w_{n,2}^*$ holds for infinitely many different values of $n$. In this case, the same argument applies, with the supremum replaced by an infimum.

Thus, we have established the existence of a pair $(\phi, \psi)$ satisfying system \eqref{sys:barrier} for the given parameters $a$ and $b$.

\end{proof}

\subsection{Proof of Theorem \ref{thm:b}}

To prove Theorem \ref{thm:b}, we will show the following technical result, which is the core of the proof.

\begin{proposition}\label{thm:bbarrier}
	\emph{For any given $a>0$ and $L>\pi$, there exits $\bar{b}>1$ such that  there exists a barrier, that is, a nontrivial solution to system \eqref{sys:barrier} with parameters $a$, $\bar{b}$ and $L$. }
\end{proposition}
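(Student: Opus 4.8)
The plan is to construct, for $b$ large, a \emph{nontrivial stationary subsolution} $(\underline\phi,\underline\psi)$ of system \eqref{sys:lv} with boundary values $(0,1)$ -- in the competitive sense of \eqref{def:subsol} -- and then let the parabolic flow carry it to a barrier. Indeed, once such a pair with $\underline\phi|_{\{0,L\}}=0$, $\underline\psi|_{\{0,L\}}=1$ and $\underline\phi\not\equiv 0$ is available, running \eqref{sys:lv} with boundary data $(0,1)$ from the initial datum $(\underline\phi,\underline\psi)$ gives, by the Comparison Principle (Theorem \ref{thm:cp}), $y_1(\cdot,t)\ge\underline\phi$ and $y_2(\cdot,t)\le\underline\psi$ for all $t>0$; since every bounded solution converges to a steady state \cite{smillie1984competitive}, $(y_1,y_2)$ tends to some $(\phi,\psi)$ solving the first three lines of \eqref{sys:barrier}, with $\phi\ge\underline\phi\not\equiv 0$, hence $(\phi,\psi)\neq(0,1)$. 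A pointwise application of the strong maximum principle to each stationary equation then upgrades this to $0<\phi,\psi<1$ on $(0,L)$ (for instance, at an interior maximum $x_0$ of $\phi$ one gets $\phi(x_0)\le 1-a\psi(x_0)\le 1$, equality forcing $\psi(x_0)=0$, which is impossible once $\psi>0$), so $(\phi,\psi)$ is a barrier.

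Everything thus reduces to the construction of $(\underline\phi,\underline\psi)$, whose first component will solve a \emph{logistic equation with a space-dependent growth rate}. Since $L>\pi$, pick $\rho_0\in(\max\{0,1-a\},1)$ so close to $1$ that $\pi/\sqrt{\rho_0}<L$, a centred subinterval $M\Subset(0,L)$ with $|M|\in(\pi/\sqrt{\rho_0},L)$, and a smooth $\rho\colon[0,L]\to\R$ with $\rho\equiv\rho_0$ on a neighbourhood of $\overline M$, $\rho\le\rho_0$ on $[0,L]$, and $\rho\le 1-a$ near $\{0,L\}$. Let $Z^\ast$ be the unique positive Dirichlet solution of $-Z''=Z(\rho(x)-Z)$ on $(0,L)$: it exists because, testing the Rayleigh quotient with the first Dirichlet eigenfunction of $M$ extended by $0$, the principal eigenvalue of $-\partial_x^2-\rho$ on $(0,L)$ is $\le \pi^2/|M|^2-\rho_0<0$, and $0<Z^\ast\le\rho_0<1$ by comparison with the constant $\rho_0$. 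Set $\underline\phi:=Z^\ast$ and $\underline\psi:=W_b$, where $W_b$ is the unique solution of $-W''=W(1-bZ^\ast-W)$ on $(0,L)$ with $W|_{\{0,L\}}=1$; then $0<W_b\le 1$, and $W_b$ solves with equality the inequality demanded of the second component of a competitive subsolution.

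The crux is to show that, for $b$ large, $W_b\le c:=(1-\rho_0)/a\in(0,1)$ on the open interval $G:=\{x\in(0,L):\rho(x)>1-a\}$, which contains $\overline M$ and satisfies $\overline G\Subset(0,L)$. Since $Z^\ast$ is positive and continuous, $\kappa:=\frac12\min_{\overline G}Z^\ast>0$, and $K$, the connected component of $\{Z^\ast\ge\kappa\}$ meeting $M$, is a compact subinterval of $(0,L)$ with $\overline G\subset\mathrm{int}\,K$, so $d_0:=\operatorname{dist}(\overline G,\partial K)>0$. On $K$ one has $bZ^\ast-1\ge b\kappa-1>0$ and, rearranging its equation, $-W_b''+(bZ^\ast-1)W_b=-W_b^{\,2}\le0$, so $W_b$ is a subsolution of the linear operator $\mathcal L:=-\partial_x^2+(bZ^\ast-1)$ on $K$; the explicit function $\bar W(x):=\cosh\!\big(\sqrt{b\kappa-1}\,(x-x_K)\big)\big/\cosh\!\big(\sqrt{b\kappa-1}\,|K|/2\big)$, with $x_K$ the centre of $K$, satisfies $\mathcal L\bar W=b(Z^\ast-\kappa)\bar W\ge 0$ on $K$ and $\bar W|_{\partial K}=1\ge W_b|_{\partial K}$, so the maximum principle for $\mathcal L$ (valid, its zeroth-order coefficient being positive on $K$) yields $W_b\le\bar W$ on $K$, whence $W_b\le\cosh\!\big(\sqrt{b\kappa-1}\,(|K|/2-d_0)\big)\big/\cosh\!\big(\sqrt{b\kappa-1}\,|K|/2\big)\to 0$ uniformly on $\overline G$ as $b\to\infty$. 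Fix $\bar b$ large enough that this last bound is $\le c$. Then $(\underline\phi,\underline\psi)=(Z^\ast,W_{\bar b})$ is a competitive subsolution of \eqref{sys:lv}: the second inequality holds with equality, and the first amounts to $\rho(x)\le 1-aW_{\bar b}$ on $(0,L)$, which holds on $\overline G$ because $aW_{\bar b}\le ac=1-\rho_0\le 1-\rho(x)$, and off $\overline G$ because $\rho(x)\le 1-a\le 1-aW_{\bar b}$. This supplies the desired subsolution, and the argument of the first paragraph concludes the proof.

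I expect the genuine difficulty to lie precisely in this construction, and within it in the matching of scales: $\underline\psi$ must be pushed below the threshold $c$ on the \emph{entire} set $G$ on which $\underline\phi=Z^\ast$ relies on the favourable rate $\rho_0$, and this is possible only because $Z^\ast$ is bounded below by a fixed positive constant on the strictly larger set $K$, so that the exponential decay of the $\cosh$-barrier has a fixed margin $d_0$ independent of $b$, while near $\{0,L\}$ the choice $\rho\le 1-a$ removes any constraint on $\underline\psi$ there. It is here, too, that the hypothesis $L>\pi$ enters decisively: for $L\le\pi$ no admissible interval $M$ exists, in accordance with the fact (noted after Theorem \ref{thm:a}) that barriers cannot exist when $L\le\pi$.
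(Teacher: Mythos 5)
Your proposal is correct, and while its skeleton coincides with the paper's -- build a nontrivial stationary subsolution of the system with boundary values $(0,1)$, evolve it under the controls $(0,1)$, and use the Comparison Principle together with the convergence of bounded trajectories to steady states to land on a barrier -- the construction of the subsolution is genuinely different. The paper solves the logistic equation for $\phi$ with the constant reduced rate $1-a(1-\varepsilon)$ on a strictly smaller interval $(0,R)$, solves the $\psi$-equation with boundary value $1-\varepsilon<1$, proves smallness of $\psi$ together with steep boundary slopes via the two-step Lemma \ref{lemma:psi}, and then glues parabolic caps on the lateral intervals to reach the boundary value $1$; this forces it to work with \emph{generalized} subsolutions and to verify the derivative-jump inequalities at the gluing points (whence the constraints $L-R<4\sqrt{2\varepsilon}$ and $C>4\varepsilon/(L-R)$). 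You instead work directly on $(0,L)$ with a space-dependent growth rate $\rho$ that is close to $1$ on a central interval of length greater than $\pi/\sqrt{\rho_0}$ and drops to $1-a$ near the endpoints, so that $\underline\psi$ can solve its equation exactly with boundary value exactly $1$: the subsolution is classical and no gluing is required. Your smallness estimate for $W_b$ on the favourable region, obtained from the explicit $\cosh$ supersolution of $-\partial_x^2+(bZ^*-1)$, replaces Lemma \ref{lemma:psi} and is more quantitative (decay like $e^{-\sqrt{b\kappa-1}\,d_0}$). Two minor points: uniqueness of $W_b$ is not needed (any solution in $[0,1]$ produced by monotone iteration between the subsolution $0$ and the supersolution $1$ serves), and in the final upgrade to $0<\phi,\psi<1$ you should first establish $\psi>0$ by the strong maximum principle (using $\psi\ge 0$, $\psi\not\equiv 0$ since $\psi=1$ on the boundary) before invoking it to exclude $\phi=1$ at an interior maximum; note also that $\psi<1$ in $(0,L)$ already follows from $\psi\le W_{\bar b}<1$ there. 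Neither affects the validity of the argument.
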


To prove Proposition \ref{thm:bbarrier}, we need a technical lemma:

\begin{lemma}\label{lemma:psi}
	\emph{Let us consider $R>\pi$, $\varepsilon\in(0,1)$, $C>0$, and $\phi(x)\in C^0(0,R)$ such that $\phi(0)=\phi(R)=0$, $\phi >0$ in $(0,R)$. Then, there exits $\bar{b}>1$ such that the solution of the problem
	\begin{equation}\label{1113}
		-\psi '' = \psi (1-\bar{b}\phi -\psi) \ \text{in} \ (0,R),  \quad \psi(0)=\psi(R)= 1-\varepsilon,
	\end{equation}
	satisfies 
	\begin{equation*}
		\psi < 1-\varepsilon \ \text{in} \ (0,R), \quad \psi'(0)< - C, \quad \psi'(R)>C.
	\end{equation*}}
\end{lemma}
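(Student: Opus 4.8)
\textbf{Proof plan for Lemma \ref{lemma:psi}.}

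The plan is to exploit the fact that making $\bar b$ large forces the reaction term $\psi(1-\bar b\phi-\psi)$ to be strongly negative on any compact subset of $(0,R)$ where $\phi$ is bounded away from zero, which pushes the solution $\psi$ well below the boundary value $1-\varepsilon$ in the interior and, by a boundary-layer/barrier argument near $x=0$ and $x=R$, produces the large boundary derivatives. First I would record basic a priori information: since the constant $1-\varepsilon$ is a supersolution of \eqref{1113} (because $\phi>0$ in $(0,R)$ makes $(1-\varepsilon)(1-\bar b\phi-(1-\varepsilon))=(1-\varepsilon)(\varepsilon-\bar b\phi)<0$ once $\bar b\phi>\varepsilon$, or more simply since $(1-\varepsilon)$ with the logistic-type nonlinearity is a supersolution for every $\bar b$), and since $0$ is a subsolution, the parabolic/elliptic comparison principle gives existence of a solution with $0<\psi\le 1-\varepsilon$ in $(0,R)$; the strong maximum principle upgrades this to $\psi<1-\varepsilon$ in $(0,R)$, giving the first claimed inequality for free and independently of $\bar b$.

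For the derivative bounds the key step is a localized lower barrier near the endpoints. Fix a small $\delta\in(0,R/2)$ so that $m:=\min_{[\delta,R-\delta]}\phi>0$ (possible since $\phi$ is continuous and strictly positive on the compact set $[\delta,R-\delta]$). On $[\delta,R-\delta]$ the solution satisfies $-\psi''\le \psi(1-\bar b m)$; once $\bar b>1/m$ the right-hand side is $\le 0$, so $\psi''\ge 0$ there, i.e. $\psi$ is convex on $[\delta,R-\delta]$. Combined with $\psi<1-\varepsilon$ and $\psi>0$, convexity on the middle block forces $\psi$ to be small somewhere: more quantitatively, integrating $-\psi''=\psi(1-\bar b\phi-\psi)\le -(\bar b m -1)\psi$ over $[\delta,R-\delta]$ and using $\psi\ge$ some positive amount on a subinterval would show $\min_{[\delta,R-\delta]}\psi\to 0$ as $\bar b\to\infty$; alternatively one shows directly that $\psi(R/2)\to 0$. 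Then on the left interval $[0,\delta]$ we have $\psi(0)=1-\varepsilon$ and $\psi(\delta)$ can be made arbitrarily small by taking $\bar b$ large, while $\psi$ is a solution of a uniformly elliptic equation with bounded right-hand side on $[0,\delta]$; a standard barrier (e.g. comparing $\psi$ from above on $[0,\delta]$ with the linear function through its endpoint values plus a controlled quadratic correction, using $|\psi''|\le C'$ from \eqref{1248}-type bounds, or directly using that $\psi$ is a subsolution of $-v''=v$ hence $v$ convex-ish) shows $\psi'(0)\le \frac{\psi(\delta)-\psi(0)}{\delta}+C'\delta$, which is $\le -\frac{1-\varepsilon}{2\delta}<-C$ once $\delta$ is chosen small relative to $C$ and then $\bar b$ is chosen large enough that $\psi(\delta)<(1-\varepsilon)/2$. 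The bound $\psi'(R)>C$ is identical by the symmetry $x\mapsto R-x$.

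The main obstacle is making the "interior smallness" step $\psi(\delta)\to 0$ as $\bar b\to\infty$ quantitative and uniform, and then transferring it cleanly to a derivative estimate at the endpoint without circularity: one must be careful that the a priori $C^2$ bound on $\psi$ near the boundary does not itself depend on $\bar b$ in a bad way (it does not, since $|\psi(1-\bar b\phi-\psi)|$ is controlled once we restrict to a region or once we use $0\le\psi\le 1-\varepsilon$ together with $\bar b\phi$ appearing only through the sign — near the boundary $\phi$ is small so $\bar b\phi$ need not be bounded, which is exactly why we work on $[0,\delta]$ with $\delta$ fixed first and only afterward send $\bar b\to\infty$, and on $[0,\delta]$ we instead use the one-sided information $-\psi''\le\psi\le 1-\varepsilon$ to get convexity-type control from below, i.e. $\psi''\ge -(1-\varepsilon)$, which suffices for the upper bound on $\psi'(0)$). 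I would organize the write-up as: (i) existence and $\psi<1-\varepsilon$ via sub/supersolutions and strong maximum principle; (ii) choice of $\delta$ and the estimate $\psi(\delta)+\psi(R-\delta)\to 0$ as $\bar b\to\infty$ using convexity on $[\delta,R-\delta]$ plus the positive lower bound $m$ on $\phi$ there; (iii) the endpoint derivative estimate on $[0,\delta]$ using $\psi''\ge-(1-\varepsilon)$ and the smallness of $\psi(\delta)$; (iv) conclude by symmetry and by finally fixing $\bar b$ large enough to beat the given constant $C$.
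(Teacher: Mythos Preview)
Your overall strategy---force $\psi$ to be small in a middle block by convexity when $\bar b$ is large, then transfer this to boundary derivative estimates via a one-sided second-derivative bound---is correct and is essentially what the paper does. But step (i) contains a genuine error that you need to repair.

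The claim that $1-\varepsilon$ is a supersolution of \eqref{1113} (hence $\psi<1-\varepsilon$ ``for free and independently of $\bar b$'') is false. Plugging in the constant $1-\varepsilon$ gives $(1-\varepsilon)(\varepsilon-\bar b\phi(x))$, and since $\phi(0)=\phi(R)=0$ with $\phi$ continuous, there is a neighborhood of each endpoint where $\bar b\phi(x)<\varepsilon$ and the expression is \emph{positive}; there $1-\varepsilon$ is a subsolution, not a supersolution. In fact for small $\bar b$ the conclusion $\psi<1-\varepsilon$ can fail outright (take $\bar b=0$ and $R>\pi$: the logistic problem $-\psi''=\psi(1-\psi)$ with boundary value $1-\varepsilon$ has a solution rising toward $1$ in the interior). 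So $\psi<1-\varepsilon$ is genuinely a $\bar b$-dependent statement and must be proved, not assumed. The correct a priori supersolution is $1$, giving only $0\le\psi\le 1$; this also means that in your step (iii) you should use $\psi''\ge -1$ (from $-\psi''\le\psi\le 1$) rather than $\psi''\ge-(1-\varepsilon)$, a harmless change of constant.

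The fix, which is exactly the paper's route, is to \emph{derive} $\psi<1-\varepsilon$ at the end from your steps (ii)--(iii): once you know $\psi(\delta)$ is tiny and $\psi''\ge -1$ on $[0,\delta]$, the same mean-value/Taylor estimate that gives $\psi'(0)<-C$ also gives $\psi'(x)<0$ for \emph{every} $x\in(0,\delta]$ (just replace $0$ by $x$ in the estimate), so $\psi$ is strictly decreasing on $[0,\delta]$ and hence $\psi<\psi(0)=1-\varepsilon$ there; symmetrically near $R$; and in the middle block $\psi$ is already below $\delta<1-\varepsilon$. Reorder your write-up accordingly: existence and $0\le\psi\le 1$ first, then interior smallness, then derivative sign on the boundary intervals, and finally read off both $\psi'(0)<-C$, $\psi'(R)>C$ and $\psi<1-\varepsilon$.
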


\begin{proof}

    Let us repeat that the existence of a solution to the Dirichlet problem \eqref{1113} is guaranteed by the classical theory of monotone methods (\cite{sattinger1972monotone}).
    
	We divide the proof in two steps.	
	
	\emph{Step 1. For all $0<\theta<R/3$ and $\delta>0$, there exists $\bar{b}$ such that $\psi(x)<\delta$ in $(3\theta,R-3\theta)$.}
	
	Suppose by contradiction that this is not true, so
	\begin{equation}\label{1629}
		\underset{(3\theta,R-3\theta)}{\sup} \psi(x)>\delta,
	\end{equation}
	for some  $\theta>0$ and $\delta>0$ and for all $b>0$. 
	Let us choose $\bar{b}$ large enough to have
	\begin{equation}\label{2102}
		\max \left\{1, \frac{2}{\delta\theta^2}+1 \right\}<\bar{b}\phi(x) \quad \text{in }(\theta,R-\theta).
	\end{equation}
	Hence, we have
	\begin{equation}\label{1254}
		-\psi''(x)<0 \quad \text{in }(\theta,R-\theta).
	\end{equation}
	So, by the maximum principle, for all $(x_1,x_2)\subset(\theta,R-\theta)$ we get that 
	\begin{equation}\label{1645}
	\underset{(x_1,x_2)}{\max} \psi(x)= \max \{\psi(x_1), \psi(x_2)\}.
	\end{equation}
	Let us now suppose that $\psi(3\theta)\geq \psi(R-3\theta)$, the other case being identical.
	Hence, by \eqref{1629} and \eqref{1645} we have that 
	$$ \psi(3\theta)>\delta. $$
	Then, by \eqref{1645} we also have that $\psi$ is nonincreasing in $(\theta, 3\theta)$, so $\psi'\leq 0$ in $(\theta, 3\theta)$ and
	\begin{equation}\label{1249}
		\psi(x)>\delta \quad \text{in } (\theta, 3\theta)
	\end{equation} 
	Now, we have that
	\begin{align*}
	0\geq \psi(3\theta)- \psi(2\theta) &=   \int_{2\theta}^{3\theta} \psi'(x) dx, \\
	& = \int_{{2\theta}}^{3\theta} \left( \psi'(2\theta)+ \int_{{2\theta}}^{x} \psi''(y)dy \right) dx, \\
	& = \int_{{2\theta}}^{3\theta} \left( \psi'(2\theta)+ \int_{{2\theta}}^{x} \psi (b\phi-1+\delta) dy \right) dx, \\
	& \geq\psi'(2\theta)\theta + 1,
	\end{align*}
	thanks to \eqref{1249} and the choice of $b$ in \eqref{2102}.
	Thus, we get that
	$$ \psi'(2\theta) \leq -\frac{1}{\theta}.$$ 
	Also, by \eqref{1254} we get that $\psi'(x)<\psi'(2\theta)$ for all $x\in(\theta, 2\theta)$. Hence, 
	\begin{equation*}
		\psi(\theta)= \psi(2\theta) + \int_{2\theta}^\theta \psi'(x)dx > 1, 
	\end{equation*}
	which is absurd. Then, the claim is proved.
	
	\emph{Step 2. For such $\bar{b}$, $\psi'(0)<-C$, $\psi'(R)>C$ and $\psi(x)<1-\varepsilon$ for $x\in(0,R)$.}
	
	Let us take $\delta\in(0, 1-\varepsilon)$ and 	
	\begin{equation}\label{1612}
		0 < \theta < \min \left\{ \frac{C}{3}, \frac{1-\varepsilon-\delta}{6C}  \right\}.
	\end{equation}
	By applying the previous step, we get that there exists $\bar{b}$ such that
	\begin{equation}\label{1713}
		\psi(x)<\delta \quad \text{for } x\in(3\theta, R-3\theta).
	\end{equation} 
	Then, by Lagrange's Theorem, there exist $\bar{x}\in (0,3\theta)$ such that
	\begin{equation*}
		\psi(\bar{x}) \leq \frac{\delta-(1-\varepsilon)}{3\theta} < -2C,
	\end{equation*}
	 where the last inequality is obtained thanks to \eqref{1612}.
	 
	 Now, we have that
	 \begin{align*}
	 	\psi'(\bar{x})-\psi'(0) &=\int_0^{\bar{x}} \psi''(x)dx \\
	 	& = \int_0^{\bar{x}} -\psi(x)(1-b\phi(x)-\psi(x))  dx \\
	 	& \geq -\bar{x} \geq -3\theta.
	 \end{align*}
	 Then, again by \eqref{1612}, we get
	 \begin{equation*}
	 	\psi'(0) < -2C+3\theta < -C.
	 \end{equation*}
	 Moreover, with the same argument, we can show that
	 \begin{equation*}
	 \psi'(x) < 0 \quad \text{in} \ (0, 3\theta).
	 \end{equation*}
	 Hence, we get that
	 \begin{equation*}
	 \psi(x) < 1-\varepsilon \quad \text{in} \ (0, 3\theta).
	 \end{equation*}
	 With a similar argument we can prove that $\psi'(R)>C$ and 
	 \begin{equation*}
	 \psi(x) < 1-\varepsilon \quad \text{in} \ (R- 3\theta, R).
	 \end{equation*}
	 Together with \eqref{1713}, this gives us $\psi(x)<1-\varepsilon$ for $x\in(0,R)$, as wished.		
\end{proof}

We underline that this lemma can also be extended in higher dimensions at least in the case of the domain being a ball or on a star domain; in fact, it relies principally on the maximum principle, and on a computation along paths connecting some interior points to the boundary.

We are now ready to give the proof of Proposition \ref{thm:bbarrier}.
\begin{proof}[Proof of Proposition \ref{thm:bbarrier}]
	Let us fix 
	\begin{equation}\label{1927}
		\varepsilon\in \left(\frac{a-1}{a}, 1  \right) \cap (0,1)
	\end{equation}
	and 
	\begin{equation}\label{1137}
		R\in \left(\max \left\{  \pi , L- 4\sqrt{2\varepsilon} \right\}, L \right).
	\end{equation}
    Notice that $R>\pi$.
		
	Let us consider the solution $\phi(x)\in C^2(0,R)$ of the equation
	\begin{equation}\label{1247}
		-\phi'' = \phi (1-a(1-\varepsilon)-\phi) \quad \text{in} \ (0,R), \ \phi(0)=\phi(R)=0. 
	\end{equation}
	Since $R>\pi$ and $1-a(1-\varepsilon)>0$ thanks to the choices in \eqref{1137} and in \eqref{1927}, we get that there exists a solution
	\begin{equation*}
		\phi(x)>0 \qquad \text{for } x\in(0,R).
	\end{equation*}
	Now let us apply Lemma \ref{lemma:psi} in $(0,R)$ with $\varepsilon$ as in \eqref{1927} and 
	\begin{equation}\label{1139}
		C> \frac{4\varepsilon}{L-R},
	\end{equation}
	and let us denote by $\psi$ the solution related to the coefficient $\bar{b}$, so that
	\begin{equation}\label{1248bis}
		-\psi''=\psi (1-\bar{b}\phi(x)-\psi(x)) \quad \text{in} \ (0,R), \ \psi(0)=\psi(R)=1-\varepsilon,
	\end{equation}
	and moreover
	\begin{equation}\label{1301}
		\psi'(0)<-C, \quad \psi'(R)>C, \quad \psi(x)<1-\varepsilon \ \text{in } (0,R).
	\end{equation}
	
	For simplicity, set $M=(L-R)/2$.
	Let us now consider the pair
	\begin{equation*}\label{phi}
	\tilde{\phi}(x)=
	\left\{
	\begin{array}{ll}
	0,  & \text{for } \ -M<x\leq 0, \\
	\phi(x), & \text{for } \ 0<x\leq R, \\
	0,  & \text{for } \ R<x< R+M,
	\end{array}\right.
	\end{equation*}
	and 
	\begin{equation*}\label{psi}
	\tilde{\psi}(x)=
	\left\{
	\begin{array}{ll}
	1-\frac{\varepsilon}{M^2}(x+M)^2,  & \text{for } \ -M<x\leq 0, \\
	\psi(x), & \text{for } \ 0<x\leq R, \\
	1-\frac{\varepsilon}{M^2}(x-R-M)^2,  & \text{for } \ R<x< R+M.
	\end{array}\right.
	\end{equation*}
	Now, we show that the pair $(\tilde{\phi}, \tilde{\psi})$ is a generalised subsolution for system \eqref{sys:barrier}. We start by showing that $(\tilde{\phi}, \tilde{\psi})$ is a subsolution in the three intervals $(-M, 0)$, $(0,R)$, and $(R, R+M)$, by verifying the inequalities as in \eqref{def:subsol}. 
    
    First of all, we have that in $(-M, 0)$, the inequality for $\tilde{\phi}$ is trivial, while for $\tilde{\psi}$ we obtain 	
    \begin{align*}
		 2\frac{\varepsilon}{M^2} &\geq \frac{\varepsilon}{M^2} (x+M)^2 \left(1-\frac{\varepsilon}{M^2}(x+M)^2\right).
	\end{align*}
	This is true since the right-hand side has its maximum equal to $1/4$ and
	\begin{equation*}
		M= \frac{L-R}{2} \leq 2\sqrt{2\varepsilon},
	\end{equation*}
	by the choice in \eqref{1137}.
	A similar argument applies to $(\tilde{\phi}, \tilde{\psi})$  restricted to $(R,R+M)$.
	In $(0,R)$,  $(\tilde{\phi}, \tilde{\psi})$ satisfies \eqref{1247} and \eqref{1248bis}, so in particular, it satisfies the inequalities that prove that the pair constitutes a subsolution in $(0,R)$.

	It is easy to check that  $\tilde{\phi}$ and $\tilde{\psi}$ are continuous in $(-M, R+M)$. 
	Also, we have that 
		\begin{align*}
	\underset{x\to 0^-}{\lim} \tilde{\phi}'(x) &< \underset{x\to 0^+}{\lim} \tilde{\phi}'(x), \\
	\underset{x\to 0^-}{\lim} \tilde{\psi}'(x) &> \underset{x\to 0^+}{\lim} \tilde{\psi}'(x),
	\end{align*}
	thanks to \eqref{1301} and the choice of $C$ in \eqref{1139}. Similar inequalities hold in $x=R$. Thus, $(\tilde{\phi}, \tilde{\psi})$ has the structure demanded by \eqref{def:gensubsol}, therefore it is a generalized subsolution for \eqref{sys:barrier}.

	Moreover, the interval $(-M, R+M)$ has length $L$ and the boundary conditions correspond to the ones given in \eqref{sys:barrier}. Then,  $(\tilde{\phi}, \tilde{\psi})(x-M)$ is a generalised subsolution for \eqref{sys:barrier} in $(0,L)$.
	
	Now, let us take the solution $(y_1,y_2)$ to system \eqref{sys:lv} with initial data $(\tilde{\phi}, \tilde{\psi})(x-M)$ and boundary conditions $(u_1,u_2)\equiv (0,1)$. Then, we have that the solution $(y_1,y_2)$ converges to a steady state solution $(\theta_1, \theta_2)$. Moreover, by the Comparison Principle, we get that 
	\begin{align*}
		0 \not\equiv \tilde{\phi}(x) &\leq \theta_1(x) & \text{in } (0, L),\\
		1 \not\equiv \tilde{\psi}(x) &\geq \theta_2(x) & \text{in } (0, L).
	\end{align*}
	Hence, $(\theta_1, \theta_2)\not \equiv (0,1)$. That is, $(\theta_1, \theta_2)$ is a non trivial solution to \eqref{sys:barrier} for parameters $a$, $L$ and $\bar{b}$. 
	
\end{proof}

\begin{remark}\label{rmk:multid_bbarrier} 
We also notice that the proof of Proposition \ref{thm:bbarrier} can be adapted in higher dimensions when the domain is a ball of radius $l>\ell$ (see Remark \ref{rmk:multid}) by using a paraboloid to extend $\psi$, namely
\begin{equation*}
	\tilde{\psi}(x)=
	\left\{
	\begin{array}{ll}
	\psi(x), & \text{for } \ x\in B_R, \\
	1-\frac{\varepsilon}{M^2}(|x|-R-M)^2,  & \text{for } \ x\in B_{R+M} \setminus B_R,
	\end{array}\right.
	\end{equation*}
where $l=R+M$, $R>\ell$, provided that $M<2\varepsilon$; the computations are very similar to the $1-$dimensional case.

However, when the domain is not a ball, the geometry might make it tricky to construct a subsolution by hand. In this case, extra hypothesis on the domain help in the construction; for example, one can suppose that the domain $\Omega$ contains a ball of radius $l>\ell$, and construct the known subsolution inside the ball, and extend the subsolution to $(0,1)$ outside the ball. However, a characterization of sets admitting or not a barrier (for a suitable $\bar{b}$, that depends on the domain itself) is very hard to achieve.
\end{remark}

We are now ready to present the proof of Theorem \ref{thm:b}.

\begin{proof}[Proof of Theorem \ref{thm:b}]
    Let us define 
    \begin{equation*}
        b^* = \inf \{ b\in (a, \ +\infty) \quad | \quad \text{system \ref{sys:barrier} admits a solution} \}.
    \end{equation*}
    We observe that $b^*<+\infty$ thanks to Proposition \ref{thm:bbarrier}. Then, thanks to the monotonicity property given in Proposition \ref{prop:mono}, a barrier exists for all $b\in (b^*, +\infty)$. 
    When $a>1$,  by Proposition \ref{prop:dependence}, we get that the barrier exists also for $b=b^*$. The second statement of the theorem then follows directly from Theorem~\ref{thm:implies}.

    Let us now prove the first statement of the theorem, which holds for $b\in (1, b^*)$.
    Consider the solution $(y_1,y_2)$ of system \eqref{sys:lv} with boundary conditions $(u_1, u_2)=(0,1)$ for $x=0, \ L$ and $t>0$ and initial datum $(y_1, y_2)\in L^{\infty}((0,L);[0,1])$. Then, thanks to the results in \cite{hirsch1983differential}, $(y_1,y_2)$ converges to a steady state solution $(\theta_1, \theta_2)$ satisfying the first three lines of system \eqref{sys:barrier}.
    By the maximum principle fulfilled by parabolic equations, we have that $0\leq \theta_1, \theta_2 \leq 1$.
    By the definition of $b^*$, there is no barrier for the parameters $a$, $L$, and $b\in (1, b^*)$, that is, system \eqref{sys:barrier} has no solution. Then, we have that $\theta_1\leq 0$ or $\theta_2\geq 1$ in at least one point of $(0,L)$. Again, by the maximum principle, $\theta_1\equiv 0$ or $\theta_2\equiv 1$.  For $(\theta_1, \theta_2)$ to solve the system \eqref{sys:lv}, the only possibility is $(\theta_1, \theta_2)\equiv (0,1)$. So, for $b\in (1, b^*)$, the system is controllable to $(0,1)$ with boundary controls $(u_1, u_2)=(0,1)$ and the proof of the first statement is complete.

\end{proof}

\subsection{Proof of Theorem \ref{thm:a}}

First of all, we prove the existence of barriers for given $b$ and $L$ and for $\bar{a}$ small enough.

\begin{proposition}\label{lemma:a}
	\emph{For any given $b$ and $L>\pi$, and for $\bar{a}\in\left(0,1-\frac{\pi^2}{L^2}\right)$, there exists a barrier, that is, a nontrivial solution to system \eqref{sys:barrier}, for $\bar{a}$, $b$ and $L$.	}
\end{proposition}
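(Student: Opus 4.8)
The plan is to follow the blueprint of the proof of Proposition~\ref{thm:bbarrier}: exhibit an ordered pair consisting of a subsolution and a supersolution of the stationary system \eqref{sys:barrier} with $\tilde\phi\not\equiv0$, run the parabolic flow \eqref{sys:lv} from the subsolution with the extremal boundary control $(u_1,u_2)\equiv(0,1)$, and read off the barrier as the limiting steady state. The decisive simplification here is that the constant $\tilde\psi\equiv1$ is a supersolution of the second elliptic equation for \emph{every} $b>1$, since $-\tilde\psi''=0\ge-b\phi=\tilde\psi(1-b\phi-\tilde\psi)$ whenever $\phi\ge0$. This decouples the construction from $b$, so the only object one must produce by hand is a nontrivial positive subsolution of the first equation with $a=\bar a$ and $\psi\equiv1$, i.e.\ a positive subsolution of the Dirichlet problem $-v''=v(1-\bar a-v)$ on $(0,L)$.

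First I would fix $\varepsilon\in\bigl(0,\,1-\bar a-\pi^2/L^2\bigr]$ — a nonempty interval precisely because $\bar a<1-\pi^2/L^2$, and contained in $(0,1)$ since $\bar a>0$ — and set $\tilde\phi(x)=\varepsilon\sin(\pi x/L)$, $\tilde\psi(x)\equiv1$. Using $-\tilde\phi''=(\pi^2/L^2)\tilde\phi$ and $0<\tilde\phi\le\varepsilon$ on $(0,L)$, a one-line check gives $-\tilde\phi''\le\tilde\phi(1-\bar a-\varepsilon)\le\tilde\phi(1-\tilde\phi-\bar a\tilde\psi)$, so $(\tilde\phi,\tilde\psi)$ is a time-independent subsolution of \eqref{sys:lv} with boundary data $(0,1)$, matching the boundary conditions of \eqref{sys:barrier} and with $\tilde\phi\not\equiv0$. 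I would then let $(y_1,y_2)$ be the solution of \eqref{sys:lv} issued from $(\tilde\phi,\tilde\psi)$ with controls $(u_1,u_2)\equiv(0,1)$; the Comparison Principle (Theorem~\ref{thm:cp}) gives $y_1(\cdot,t)\ge\tilde\phi$ and $y_2(\cdot,t)\le1$ for all $t\ge0$, while the bilateral bounds keep $0\le y_1,y_2\le1$.

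Next I would invoke the convergence result of \cite{smillie1984competitive}: $(y_1,y_2)$ converges as $t\to+\infty$ to a steady state $(\theta_1,\theta_2)$ solving the first three lines of \eqref{sys:barrier}. Passing $y_1\ge\tilde\phi$ to the limit yields $\theta_1\ge\tilde\phi>0$ in $(0,L)$, so $\theta_1\not\equiv0$ and hence $(\theta_1,\theta_2)\ne(0,1)$. To upgrade this to a genuine barrier it remains to verify the strict bounds $0<\theta_1,\theta_2<1$ on $(0,L)$: positivity of $\theta_1$ is immediate from $\theta_1\ge\tilde\phi>0$; positivity of $\theta_2$ follows from the strong maximum principle (it cannot vanish at an interior point without being identically $0$, which is incompatible with $\theta_2(0)=\theta_2(L)=1$); and the strict upper bounds follow from a second-derivative test at a putative interior maximum — if, say, $\theta_1(x_0)=1$ then $-\theta_1''(x_0)\ge0$, whereas $-\theta_1''(x_0)=-a\theta_2(x_0)<0$, a contradiction, and symmetrically for $\theta_2$. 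Thus $(\theta_1,\theta_2)$ is a barrier for the parameters $\bar a$, $b$, $L$.

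I do not expect a serious obstacle: this argument is lighter than that of Proposition~\ref{thm:bbarrier} because no auxiliary Lemma~\ref{lemma:psi}-type analysis of $\psi$ is needed. The only delicate points are bookkeeping — verifying the subsolution inequalities, the boundary matching, and the strict interior bounds of the limiting profile — and recognizing that the threshold $1-\pi^2/L^2$ is exactly the condition under which $-v''=v(1-\bar a-v)$ admits a positive solution on $(0,L)$, equivalently that the principal Dirichlet eigenvalue $\pi^2/L^2$ lies strictly below $1-\bar a$. As an aside, one could alternatively replace the parabolic flow by a direct monotone iteration on the elliptic system between $(\tilde\phi,1)$ and a suitable supersolution, but the parabolic route is shorter and consistent with the paper's conventions.
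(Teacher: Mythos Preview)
Your proof is correct and follows essentially the same strategy as the paper: take $\tilde\psi\equiv1$ together with a positive subsolution of $-v''=v(1-\bar a-v)$ on $(0,L)$ as an ordered subsolution for \eqref{sys:barrier}, then extract the barrier by monotonicity. The only cosmetic differences are that the paper uses the exact rescaled logistic profile $\theta_1(x)=(1-\bar a)\,\Theta_\ell(x\sqrt{1-\bar a})$ in place of your eigenfunction $\varepsilon\sin(\pi x/L)$, and appeals directly to Sattinger's monotone method rather than running the parabolic flow; your version also spells out the verification of the strict interior bounds more explicitly than the paper does.
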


\begin{proof}
Let us take $\bar{a}$ such that
\begin{equation}\label{1450}
    \bar{a}< 1-\frac{\pi^2}{L^2}
\end{equation}
and let us define $\ell:=\sqrt{1-\bar{a}}L$. It is easy to check that $\ell>\pi$ thanks to the choice \eqref{1450}. Then, by Lemma \ref{lemma:logistic}, there exists a solution $\Theta_{\ell}>0$ in $(0,\ell)$ to the logistic problem 
\begin{equation*}
    -\Theta_{\ell}''=\Theta_{\ell}(1-\Theta_{\ell}) \quad \text{in} \ (0, \ell), \qquad \Theta_{\ell}(0)=\Theta_{\ell}(\ell)=0.
\end{equation*}

	Let us now consider the function
	\begin{equation*}
	\theta_1 (x) := (1-\bar{a}) \, \Theta_{\ell} \left({x}{\sqrt{1-\bar{a}}}\right),
	\end{equation*}
    defined in $(0,L)$.
	Since
	\begin{equation*}
	-(1-\bar{a})^2\Theta_{\ell}''(x\sqrt{1-\bar{a}})= (1-\bar{a})\Theta_{\ell}(x\sqrt{1-\bar{a}}) \left(1-\bar{a} - (1-\bar{a})\Theta_{\ell}(x\sqrt{1-\bar{a}})  \right) 
	\end{equation*}
    then $\theta_1$ solves the Dirichlet problem
    \begin{equation}\label{eq:true}
      - \theta_1 ''(x)=  \theta_1 (1-\bar{a}-\theta_1) \quad \text{in} \ (0,L), \qquad \theta_1 (0)=\theta_1(L)=0,
    \end{equation}

    Now let us take $\theta_2(x)\equiv 1$ in $[0,L]$. Then, thanks to \eqref{eq:true} the couple $(\theta_1,\theta_2)$ is a subsolution for system \eqref{sys:barrier}. Hence, by the theory of monotone systems (see \cite{sattinger1972monotone}), system \eqref{sys:barrier} admits a solution $(\phi,\psi)$. In fact, by the Comparison Principle of Theorem \ref{thm:cp}, we have that $\phi >\theta_1 >0$ and $\psi <\theta_2=1$ in $(0,L)$, since $(\theta_1,\theta_2)$ is not a solution for the differential equations in system \ref{sys:barrier}.

\end{proof}

Now we are ready to give the proof of Theorem \ref{thm:a}:

\begin{proof}[Proof of Theorem \ref{thm:a}]
Let us take
\begin{equation*}
    a^*=\sup\{ a\in (0,b) \ | \ \text{system \eqref{sys:barrier} admits a solution} \}.
\end{equation*}

	Thanks to Proposition \ref{lemma:a}, $a^*\geq 1-\frac{\pi^2}{L^2}$ because for  $\bar{a}\in\left(0,1-\frac{\pi^2}{L^2}\right)$, system \eqref{sys:barrier} has a non-trivial solution  $(\phi, \psi)$. Then, thanks to Proposition \ref{prop:mono}, a barrier exists for $a\in(0,a^*)$. By Proposition \ref{prop:dependence}  we also have a barrier for $a^*> 1$. Then, the second statement of the theorem is a consequence of Theorem \ref{thm:implies}.

    Let us now prove the first statement of the theorem, which holds for $a\in (a^*,b)$. Let us choose the boundary controls $(u_1, u_2)=(0,1)$ for $x=0, \ L$ and $t>0$ and let us consider the solution $(y_1,y_2)$ of system \eqref{sys:lv} with  initial datum $(y_1, y_2)\in \mathcal{C}^0((0,L);[0,1])$. Then, thanks to the results in \cite{hirsch1983differential}, $(y_1,y_2)$ converges to a steady state solution $(\theta_1, \theta_2)$ fulfilling the first three equations of system \eqref{sys:barrier}.
    By the maximum principle for parabolic equations, we have that $0\leq \theta_1, \theta_2 \leq 1$.
    By the definition of $a^*$, there is no barrier for the parameters $b$, $L$, and $a\in (a^*, b)$, that is, system \eqref{sys:barrier} has no solution. Then, we have that $\theta_1\leq 0$ or $\theta_2\geq 1$ in at least one point of $(0,L)$. Again, by the maximum principle, necessarily, $\theta_1\equiv 0$ or $\theta_2\equiv 1$.  For $(\theta_1, \theta_2)$ to solve the system \eqref{sys:lv}, the only possibility is $(\theta_1, \theta_2)\equiv (0,1)$. So, for $a\in (a^*,b)$, the system is controllable to $(0,1)$ with boundary controls $(u_1, u_2)=(0,1)$ and the proof of the first statement is complete.

\end{proof}

\section{Conclusions and perspectives}

\noindent \textbf{Discussion of the obtained results.}
We have analyzed the controllability of the diffusive Lotka -- Volterra competitive system toward certain constant steady states. Our analysis reveals that, depending on the parameter choices, a variety of distinct dynamical behaviors can emerge:
\begin{itemize}
	\item Controllability to $(0,0)$:
	\begin{itemize}
		\item	Controllability is possible when the spatial domain is sufficiently small, specifically when its length satisfies $L \leq \pi$ (see Theorem~\ref{thm:strong}, part 3), with controls $(u_1,u_2)\equiv (0,0)$. In this case, the null steady state is stable for each equation in the system.
	\item	Controllability fails for larger domains, i.e., when $L > \pi$  (see Theorem~\ref{thm:strong}, part 4). Here, the simultaneous extinction of both populations is obstructed by the instability of the null steady state in each equation, which is caused by the nonlinearity.	
    This result is coherent with the biological interpretation of the problem; in fact, when we have a large domain, eliminating the two species at the boundary is not sufficient to eradicate them inside the domain. 
	\end{itemize}
	\item Controllability to $(1,0)$: the steady state corresponding to the extinction of the least competitive species (in our case $(1,0)$, since we chose the competition coefficient to be $a<b$) can be reached asymptotically from any bounded initial datum (see Theorem \ref{thm:strong}, part 1). 
    This result is also in line with the biological interpretation of the problem, since the stronger species naturally tends to eliminate its competitor.
	\item Controllability to $(0,1)$: regarding the steady state corresponding to the extinction of the most competitive species, we have different behaviours depending on the parameters:
	\begin{itemize}
		\item The steady state $(0,1)$ can be reached from any initial datum when the length of the space-interval $L$ is small (see Theorem \ref{thm:strong}, part 2).
		\item For a general interval of length $L$, it can be reached from some well-chosen initial data (see Theorem \ref{thm:strong}, part 3), which comprises at least constant initial data which are at each point in the basin of attraction (in the sense of a dynamical system) of the equilibrium $(0,1)$.
		\item When the coefficient $b$, which measures the competition of the first population with respect  to the second one, is larger than a threshold (that depends on $a$ and $L$), then the steady state $(0,1)$ cannot be reached from some initial data, see Theorem \ref{thm:b}. This is due to the presence of a steady state for the system, with the same boundary conditions, that attracts the solutions, see also Theorem \ref{thm:implies}. 
	
		\item When the coefficient $a$, which measures the competition of the second population with respect to the first one, is smaller than a threshold (that depends on $b$ and $L$), then the steady state $(0,1)$ cannot be reached from some initial data, see Theorem \ref{thm:a}, againg for the presence of the barrier solution.
        
	\end{itemize}
        What we observe is also coherent with what is observed in nature and it highlights some phenomena in bounded domains (see for example \cite{robertson2017large}). 
        In fact, it is known that, when an alien species reaches an environment occupied by a native species and starts propagating, it is very hard to stop its propagation with solely punctual interventions. 
        Our theorems prove mathematically that eliminating the stronger species in a large domain where it has already started occupying, by only acting on the boundary, is not possible. 
	
	\item Controllability to $(w_1^*, w_2^*)$: 
	as stated in Lemma \ref{lemma:barrier}, a barrier prevents controllability not only to $(0,1)$, but also to $(w_1^*, w_2^*)$. 
	In fact, the barrier is at some point larger than the constant steady state we want to achieve, and no boundary control can make the solution pass below the level of the barrier.
	Hence, all non-controllability results for large $b$ (Theorem \ref{thm:b}) and small $a$ (Theorem \ref{thm:a}) also apply to $(w_1^*, w_2^*)$.  
    Again, this mirrors what we observe in an environment with two strongly competing species where one has an advantage over the other: even with human intervention, it is often not possible to keep the two populations around the coexistence steady state.
\end{itemize}

We also comment on the extension of our results to the case of higher dimensions. 
Notice that the majority of the basic framework that we use, like the Comparison Principle \ref{thm:cp} or the convergence Proposition \ref{prop:dependence}, are valid in any dimension. 
We recall here all the comments on the more technical results:
\begin{itemize}
    \item The proof of Theorem \ref{thm:strong} rely basically on the Comparison Principle \ref{thm:cp} and on Theorem \ref{thm:iida1}, which holds in all dimensions, on the existence of the travelling wave with given sign of the speed, which can be used in the same way, and on Lemma \ref{lemma:logistic}, which can be extended as commented in Remark \ref{rmk:multid}, by adapting the threshold $\pi$ to something else. Therefore, we expect, adjusting the threshold as explained in Remark \ref{rmk:multid}, the same results as the ones for the controllability to $(1,0)$, $(0,0)$, $(0,1)$ and for the non-controllability to $(0,0)$ in Theorem \ref{thm:strong} to hold in dimension $n>1$.
    \item The proof of Theorem \ref{thm:b} relies principally on Proposition \ref{thm:bbarrier}, which can be adapted to higher dimensions for radial domains as explained in Remark \ref{rmk:multid_bbarrier}. 
    In the same remark, we also state that for non-radial domains, extra hypotheses might be required to prove Proposition \ref{thm:bbarrier}. 
    Hence, we expect the result of Theorem \ref{thm:b} -- namely, the controllability or non-controllability to $(0,1)$ depending on a threshold on $b$, due to the existence of a barrier -- to hold in higher dimensions for radial domains under the hypothesis on the radius prescribed by Remark \ref{rmk:multid}, and for star domains under extra hypotheses on the geometry of the domain.  
    \item The proof of Theorem \ref{thm:a} relies principally on Lemma \ref{lemma:logistic}, which can be adapted in the case of higher dimensions as explained in Remark \ref{rmk:multid}. 
    After the proof of Theorem \ref{thm:a}, we comment on the fact that the arguments may be adapted (with heavier computations) to the case of star domains. 
    Therefore, we expect the results of Theorem \ref{thm:a} -- namely, the controllability or non-controllability to $(0,1)$ depending on a threshold on $a$, due to the existence of a barrier -- to hold in higher dimensions for star domains under the hypothesis on the domain prescribed by Remark \ref{rmk:multid}.
\end{itemize}

Moreover, we obtain additional insights into the monotonic dependence of controllability on system parameters (see Proposition~\ref{prop:mono}):
\begin{itemize}
\item If controllability to the steady state $(0,1)$ fails for a given interval of length $L$, then it also fails for all larger intervals.
\item If controllability to the steady state $(0,1)$  fails for a given value of the competition coefficient $b$, then it also fails for all larger values of $b$.
\item If controllability to the steady state $(0,1)$  fails for a given value of the competition coefficient $a$, then it also fails for all smaller values of $a$.
\end{itemize}

This monotonicity property allows us to prove, for any $L> \pi$, the existence of a threshold for both $a$ and $b$. Moreover, it gives us hope to find a threshold for $L$.

\begin{figure}[htbp]
    \centering
    \begin{subfigure}{0.48\textwidth}
        \centering
        \includegraphics[width=\linewidth]{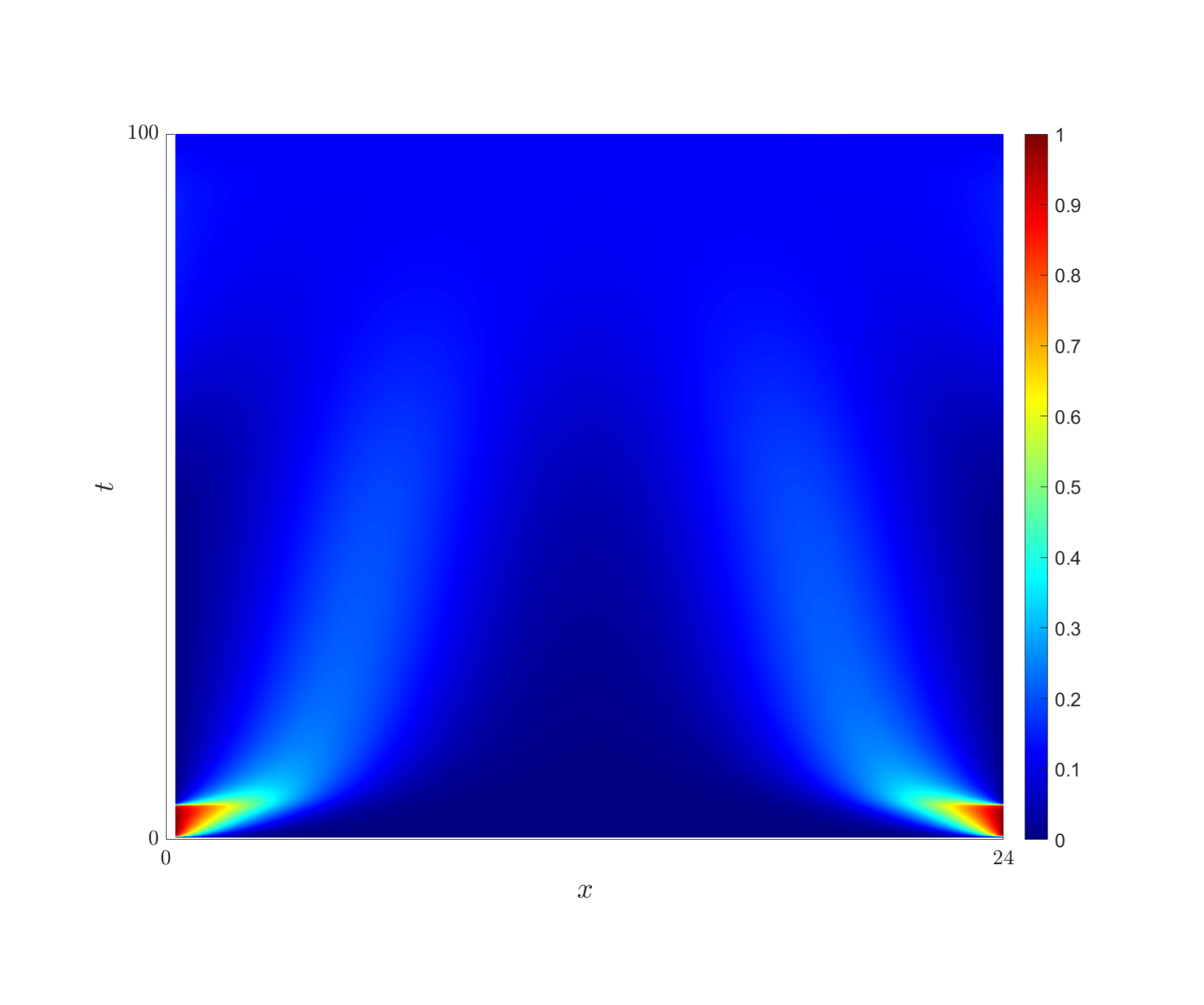}
        \caption{Solution $y_1$}
    \end{subfigure}
    \hfill
    \begin{subfigure}{0.48\textwidth}
        \centering
        \includegraphics[width=\linewidth]{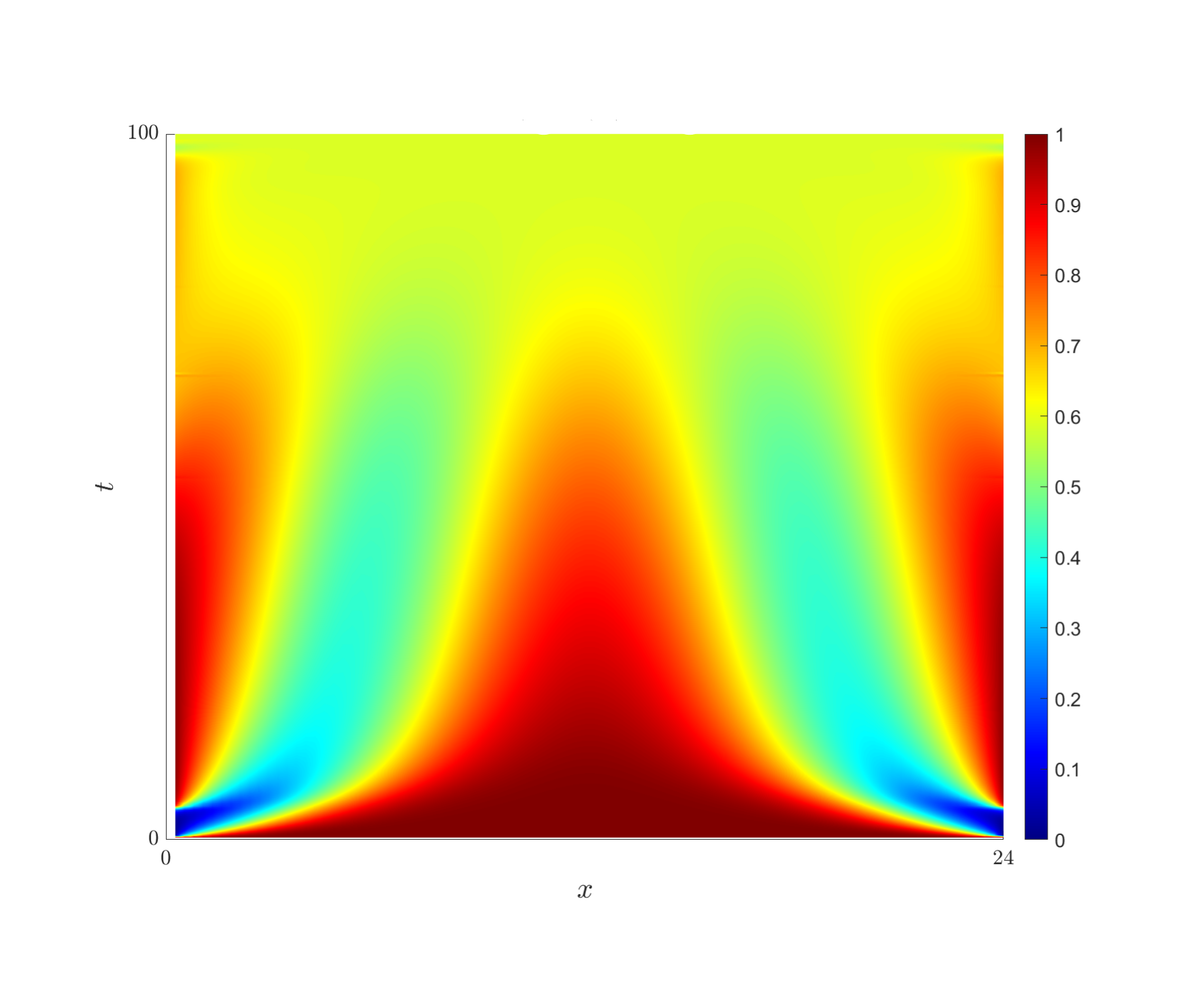}
        \caption{Solution $y_2$}
    \end{subfigure}
    \caption{Controllability to the steady state $(w_1^*, w_2^*) \approx (0.1176, 0.5882)$ from the initial state $(0,1)$ with $L = 24$, $a = 1.5$, $b = 3.5$, and $T = 100$, simulated in \textsc{Matlab} using the \texttt{CasADi} package. The target is well approximated. Similar results have been observed for a wide range of parameters $L$, $a$, and $b$.}
    \label{fig:coex}
\end{figure}

\medskip

\noindent \textbf{Perspectives.}  
We give here final comments that illustrate new research directions opened up by this paper.

\smallskip

As mentioned earlier, we expect the existence of a threshold for the length $L$ of the interval, depending on the parameters $a$ and $b$, such that controllability holds for sufficiently small intervals and fails for larger ones (see Conjecture~\ref{thm:L}). However, proving this rigorously is challenging: the construction of a barrier solution would require detailed knowledge of the shape of solutions, which is currently lacking in the literature. Furthermore, we cannot rule out the possibility that, for certain values of $a$ and $b$, the system may be controllable on any bounded interval (with boundary controls applied at both endpoints), but not on the half-line $\mathbb{R}^+ $ when control is imposed only at $x = 0$. A thorough investigation of this scenario is deferred to future work.

\smallskip

Numerical experiments suggest that it is possible to steer the system from the steady state $(0,1)$ to the coexistence steady state $(w_1^*,w_2^*)$ (see, for instance, Figure~\ref{fig:coex}). In the context of bistable reaction-diffusion equations (see \cite{domenec1}), controllability to a saddle point equilibrium has been achieved by constructing a path of intermediate steady states. A key ingredient in that strategy is the existence of an invariant region in the phase plane.

In contrast, for the Lotka -- Volterra system, the phase plane analysis is significantly more involved due to the higher dimensionality of the system (four variables instead of two) and the absence of a variational structure that could guarantee the existence of suitable path of steady state. Addressing these difficulties remains an open direction for future research.

\smallskip

We did not consider the case $a = b > 1$, where the two species are equally adapted to the environment and compete symmetrically. As shown in Theorem 1.1 of \cite{guo2013sign}, this setting admits a standing wave-- a travelling wave front with zero speed-- which may serve to construct a path of stable steady states by using restrictions of the wave on intervals of length $(0,L)$. This suggests that, using the techniques developed in \cite{dario}, it could be possible to steer the system from any initial data to either of the steady states $(0,1)$ or $(1,0)$, without encountering barrier phenomena.

In contrast, when $a = b = 1$ (the so-called critical case), standing waves do not exist (see \cite{alfaro2023lotka}), and alternative methods would be required.

Regarding the steady state $(0,0)$, when $a=b$, we expect that the conclusions of Theorem \ref{thm:strong} remain valid. The controllability to the coexistence steady state $(w_1^*,w_2^*)$ presents the same difficulties as the case $a\neq b$. 

\smallskip

Finally, we emphasize that the Lotka -- Volterra model serves as a representative example of a  nonlinear reaction-diffusion system where natural constraints on the controls arise. Similar analytical techniques could be applied to other systems with different reaction kinetics or diffusion terms, and warrant further investigation.

\backmatter

\bmhead{Acknowledgements}
The work of the second author was funded by then  ERC Advanced Grant CoDeFeL,  the Grants PID2020-112617GB-C22 KiLearn and TED2021-131390B-I00-DasEl of MINECO and PID2023-146872OB-I00-DyCMaMod of MICIU (Spain),  the Alexander von Humboldt-Professorship program, the European Union's Horizon Europe MSCA project ModConFlex, the Transregio 154 Project ``Mathematical Modelling, Simulation and Optimization Using the Example of Gas Networks'' of the DFG, the AFOSR 24IOE027 project, and the Madrid Government - UAM Agreement for the Excellence of the University Research Staff in the context of the V PRICIT (Regional Programme of Research and Technological Innovation). 



\begin{thebibliography}{34}
\ifx \bisbn   \undefined \def \bisbn  #1{ISBN #1}\fi
\ifx \binits  \undefined \def \binits#1{#1}\fi
\ifx \bauthor  \undefined \def \bauthor#1{#1}\fi
\ifx \batitle  \undefined \def \batitle#1{#1}\fi
\ifx \bjtitle  \undefined \def \bjtitle#1{#1}\fi
\ifx \bvolume  \undefined \def \bvolume#1{\textbf{#1}}\fi
\ifx \byear  \undefined \def \byear#1{#1}\fi
\ifx \bissue  \undefined \def \bissue#1{#1}\fi
\ifx \bfpage  \undefined \def \bfpage#1{#1}\fi
\ifx \blpage  \undefined \def \blpage #1{#1}\fi
\ifx \burl  \undefined \def \burl#1{\textsf{#1}}\fi
\ifx \doiurl  \undefined \def \doiurl#1{\url{https://doi.org/#1}}\fi
\ifx \betal  \undefined \def \betal{\textit{et al.}}\fi
\ifx \binstitute  \undefined \def \binstitute#1{#1}\fi
\ifx \binstitutionaled  \undefined \def \binstitutionaled#1{#1}\fi
\ifx \bctitle  \undefined \def \bctitle#1{#1}\fi
\ifx \beditor  \undefined \def \beditor#1{#1}\fi
\ifx \bpublisher  \undefined \def \bpublisher#1{#1}\fi
\ifx \bbtitle  \undefined \def \bbtitle#1{#1}\fi
\ifx \bedition  \undefined \def \bedition#1{#1}\fi
\ifx \bseriesno  \undefined \def \bseriesno#1{#1}\fi
\ifx \blocation  \undefined \def \blocation#1{#1}\fi
\ifx \bsertitle  \undefined \def \bsertitle#1{#1}\fi
\ifx \bsnm \undefined \def \bsnm#1{#1}\fi
\ifx \bsuffix \undefined \def \bsuffix#1{#1}\fi
\ifx \bparticle \undefined \def \bparticle#1{#1}\fi
\ifx \barticle \undefined \def \barticle#1{#1}\fi
\bibcommenthead
\ifx \bconfdate \undefined \def \bconfdate #1{#1}\fi
\ifx \botherref \undefined \def \botherref #1{#1}\fi
\ifx \url \undefined \def \url#1{\textsf{#1}}\fi
\ifx \bchapter \undefined \def \bchapter#1{#1}\fi
\ifx \bbook \undefined \def \bbook#1{#1}\fi
\ifx \bcomment \undefined \def \bcomment#1{#1}\fi
\ifx \oauthor \undefined \def \oauthor#1{#1}\fi
\ifx \citeauthoryear \undefined \def \citeauthoryear#1{#1}\fi
\ifx \endbibitem  \undefined \def \endbibitem {}\fi
\ifx \bconflocation  \undefined \def \bconflocation#1{#1}\fi
\ifx \arxivurl  \undefined \def \arxivurl#1{\textsf{#1}}\fi
\csname PreBibitemsHook\endcsname

\bibitem[\protect\citeauthoryear{Alfaro and Xiao}{2023}]{alfaro2023lotka}
\begin{barticle}
\bauthor{\bsnm{Alfaro}, \binits{M.}},
\bauthor{\bsnm{Xiao}, \binits{D.}}:
\batitle{Lotka--{V}olterra competition-diffusion system: the critical competition case}.
\bjtitle{Communications in Partial Differential Equations}
\bvolume{48}(\bissue{2}),
\bfpage{182}--\blpage{208}
(\byear{2023})
\end{barticle}
\endbibitem

\bibitem[\protect\citeauthoryear{Blat and Brown}{1984}]{blat1984bifurcation}
\begin{barticle}
\bauthor{\bsnm{Blat}, \binits{J.}},
\bauthor{\bsnm{Brown}, \binits{K.J.}}:
\batitle{Bifurcation of steady-state solutions in predator-prey and competition systems}.
\bjtitle{Proceedings of the Royal Society of Edinburgh Section A: Mathematics}
\bvolume{97},
\bfpage{21}--\blpage{34}
(\byear{1984})
\end{barticle}
\endbibitem

\bibitem[\protect\citeauthoryear{Cantrell and Cosner}{1989}]{cantrell1989diffusive}
\begin{barticle}
\bauthor{\bsnm{Cantrell}, \binits{R.S.}},
\bauthor{\bsnm{Cosner}, \binits{C.}}:
\batitle{Diffusive logistic equations with indefinite weights: population models in disrupted environments}.
\bjtitle{Proceedings of the Royal Society of Edinburgh Section A: Mathematics}
\bvolume{112}(\bissue{3-4}),
\bfpage{293}--\blpage{318}
(\byear{1989})
\end{barticle}
\endbibitem

\bibitem[\protect\citeauthoryear{Cosner and Lazer}{1984}]{cosner1984stable}
\begin{barticle}
\bauthor{\bsnm{Cosner}, \binits{C.}},
\bauthor{\bsnm{Lazer}, \binits{A.C.}}:
\batitle{Stable coexistence states in the {V}olterra--{L}otka competition model with diffusion}.
\bjtitle{SIAM Journal on Applied Mathematics}
\bvolume{44}(\bissue{6}),
\bfpage{1112}--\blpage{1132}
(\byear{1984})
\end{barticle}
\endbibitem

\bibitem[\protect\citeauthoryear{Conti et~al.}{2005}]{conti2005asymptotic}
\begin{barticle}
\bauthor{\bsnm{Conti}, \binits{M.}},
\bauthor{\bsnm{Terracini}, \binits{S.}},
\bauthor{\bsnm{Verzini}, \binits{G.}}:
\batitle{Asymptotic estimates for the spatial segregation of competitive systems}.
\bjtitle{Advances in Mathematics}
\bvolume{195}(\bissue{2}),
\bfpage{524}--\blpage{560}
(\byear{2005})
\end{barticle}
\endbibitem

\bibitem[\protect\citeauthoryear{Gardner}{1982}]{gardner1982existence}
\begin{barticle}
\bauthor{\bsnm{Gardner}, \binits{R.A.}}:
\batitle{Existence and stability of travelling wave solutions of competition models: a degree theoretic approach}.
\bjtitle{Journal of Differential equations}
\bvolume{44}(\bissue{3}),
\bfpage{343}--\blpage{364}
(\byear{1982})
\end{barticle}
\endbibitem

\bibitem[\protect\citeauthoryear{Guo and Lin}{2013}]{guo2013sign}
\begin{barticle}
\bauthor{\bsnm{Guo}, \binits{J.-S.}},
\bauthor{\bsnm{Lin}, \binits{Y.-C.}}:
\batitle{The sign of the wave speed for the {L}otka-{V}olterra competition-diffusion system}.
\bjtitle{Communications on Pure \& Applied Analysis}
\bvolume{12}(\bissue{5}),
\bfpage{2083}
(\byear{2013})
\end{barticle}
\endbibitem

\bibitem[\protect\citeauthoryear{Girardin and Lam}{2019}]{girardin2019invasion}
\begin{barticle}
\bauthor{\bsnm{Girardin}, \binits{L.}},
\bauthor{\bsnm{Lam}, \binits{K.-Y.}}:
\batitle{Invasion of open space by two competitors: spreading properties of monostable two-species competition-diffusion systems}.
\bjtitle{Proceedings of the London Mathematical Society}
\bvolume{119}(\bissue{5}),
\bfpage{1279}--\blpage{1335}
(\byear{2019})
\end{barticle}
\endbibitem

\bibitem[\protect\citeauthoryear{Guo et~al.}{2019}]{guo2019sign}
\begin{barticle}
\bauthor{\bsnm{Guo}, \binits{J.-S.}},
\bauthor{\bsnm{Nakamura}, \binits{K.-I.}},
\bauthor{\bsnm{Ogiwara}, \binits{T.}},
\bauthor{\bsnm{Wu}, \binits{C.-H.}}:
\batitle{The sign of traveling wave speed in bistable dynamics}.
\bjtitle{Discrete and Continuous Dynamical Systems}
\bvolume{40}(\bissue{6}),
\bfpage{3451}--\blpage{3466}
(\byear{2019})
\end{barticle}
\endbibitem

\bibitem[\protect\citeauthoryear{Hirsch}{1983}]{hirsch1983differential}
\begin{barticle}
\bauthor{\bsnm{Hirsch}, \binits{M.W.}}:
\batitle{Differential equations and convergence almost everywhere in strongly monotone semiflows}.
\bjtitle{Contemp. Math}
\bvolume{17},
\bfpage{267}--\blpage{285}
(\byear{1983})
\end{barticle}
\endbibitem

\bibitem[\protect\citeauthoryear{Hirsch}{1988}]{hirsch1988systems}
\begin{barticle}
\bauthor{\bsnm{Hirsch}, \binits{M.W.}}:
\batitle{Systems of differential equations which are competitive or cooperative: Iii. competing species}.
\bjtitle{Nonlinearity}
\bvolume{1}(\bissue{1}),
\bfpage{51}
(\byear{1988})
\end{barticle}
\endbibitem

\bibitem[\protect\citeauthoryear{Iida et~al.}{1998}]{iida1998diffusion}
\begin{barticle}
\bauthor{\bsnm{Iida}, \binits{M.}},
\bauthor{\bsnm{Muramatsu}, \binits{T.}},
\bauthor{\bsnm{Ninomiya}, \binits{H.}},
\bauthor{\bsnm{Yanagida}, \binits{E.}}:
\batitle{Diffusion-induced extinction of a superior species in a competition system}.
\bjtitle{Japan journal of industrial and applied mathematics}
\bvolume{15}(\bissue{2}),
\bfpage{233}--\blpage{252}
(\byear{1998})
\end{barticle}
\endbibitem

\bibitem[\protect\citeauthoryear{Kan-On}{1996}]{kan1996existence}
\begin{barticle}
\bauthor{\bsnm{Kan-On}, \binits{Y.}}:
\batitle{Existence of standing waves for competition-diffusion equations}.
\bjtitle{Japan Journal of Industrial and Applied Mathematics}
\bvolume{13}(\bissue{1}),
\bfpage{117}--\blpage{133}
(\byear{1996})
\end{barticle}
\endbibitem

\bibitem[\protect\citeauthoryear{Kan-On}{1997}]{kan1997fisher}
\begin{barticle}
\bauthor{\bsnm{Kan-On}, \binits{Y.}}:
\batitle{Fisher wave fronts for the {L}otka-{V}olterra competition model with diffusion}.
\bjtitle{Nonlinear Analysis: Theory, methods \& Applications}
\bvolume{28}(\bissue{1}),
\bfpage{145}--\blpage{164}
(\byear{1997})
\end{barticle}
\endbibitem

\bibitem[\protect\citeauthoryear{Le~Balc'H}{2019}]{balch}
\begin{barticle}
\bauthor{\bsnm{Le~Balc'H}, \binits{K.}}:
\batitle{Null-controllability of two species reaction-diffusion system with nonlinear coupling: a new duality method}.
\bjtitle{SIAM Journal on Control and Optimization}
\bvolume{57}(\bissue{4}),
\bfpage{2541}--\blpage{2573}
(\byear{2019})
\end{barticle}
\endbibitem

\bibitem[\protect\citeauthoryear{Ma et~al.}{2019}]{ma2019speed}
\begin{barticle}
\bauthor{\bsnm{Ma}, \binits{M.}},
\bauthor{\bsnm{Huang}, \binits{Z.}},
\bauthor{\bsnm{Ou}, \binits{C.}}:
\batitle{Speed of the travelling wave for the bistable {L}otka--{V}olterra competition model}.
\bjtitle{Nonlinearity}
\bvolume{32}(\bissue{9}),
\bfpage{3143}
(\byear{2019})
\end{barticle}
\endbibitem

\bibitem[\protect\citeauthoryear{Matano and Mimura}{1983}]{matano1983pattern}
\begin{barticle}
\bauthor{\bsnm{Matano}, \binits{H.}},
\bauthor{\bsnm{Mimura}, \binits{M.}}:
\batitle{Pattern formation in competition-diffusion systems in nonconvex domains}.
\bjtitle{Publications of the Research Institute for Mathematical Sciences}
\bvolume{19}(\bissue{3}),
\bfpage{1049}--\blpage{1079}
(\byear{1983})
\end{barticle}
\endbibitem

\bibitem[\protect\citeauthoryear{Ma et~al.}{2020}]{ma2020bistable}
\begin{barticle}
\bauthor{\bsnm{Ma}, \binits{M.}},
\bauthor{\bsnm{Zhang}, \binits{Q.}},
\bauthor{\bsnm{Yue}, \binits{J.}},
\bauthor{\bsnm{Ou}, \binits{C.}}:
\batitle{Bistable wave speed of the {L}otka-{V}olterra competition model}.
\bjtitle{Journal of Biological Dynamics}
\bvolume{14}(\bissue{1}),
\bfpage{608}--\blpage{620}
(\byear{2020})
\end{barticle}
\endbibitem

\bibitem[\protect\citeauthoryear{Ninomiya}{1997}]{ninomiya1997separatrices}
\begin{bchapter}
\bauthor{\bsnm{Ninomiya}, \binits{H.}}:
\bctitle{Separatrices of competition-diffusion equations}.
In: \bbtitle{Reaction-diffusion Equations And Their Applications And Computational Aspects-Proceedings Of The China-japan Symposium},
p. \bfpage{118}
(\byear{1997}).
\bcomment{World Scientific}
\end{bchapter}
\endbibitem

\bibitem[\protect\citeauthoryear{Py{\v{s}}ek et~al.}{2020}]{pyvsek2020scientists}
\begin{barticle}
\bauthor{\bsnm{Py{\v{s}}ek}, \binits{P.}},
\bauthor{\bsnm{Hulme}, \binits{P.E.}},
\bauthor{\bsnm{Simberloff}, \binits{D.}},
\bauthor{\bsnm{Bacher}, \binits{S.}},
\bauthor{\bsnm{Blackburn}, \binits{T.M.}},
\bauthor{\bsnm{Carlton}, \binits{J.T.}},
\bauthor{\bsnm{Dawson}, \binits{W.}},
\bauthor{\bsnm{Essl}, \binits{F.}},
\bauthor{\bsnm{Foxcroft}, \binits{L.C.}},
\bauthor{\bsnm{Genovesi}, \binits{P.}}, \betal:
\batitle{Scientists' warning on invasive alien species}.
\bjtitle{Biological Reviews}
\bvolume{95}(\bissue{6}),
\bfpage{1511}--\blpage{1534}
(\byear{2020})
\end{barticle}
\endbibitem

\bibitem[\protect\citeauthoryear{Pouchol et~al.}{2019}]{pouchol2019phase}
\begin{barticle}
\bauthor{\bsnm{Pouchol}, \binits{C.}},
\bauthor{\bsnm{Tr{\'e}lat}, \binits{E.}},
\bauthor{\bsnm{Zuazua}, \binits{E.}}:
\batitle{Phase portrait control for 1d monostable and bistable reaction--diffusion equations}.
\bjtitle{Nonlinearity}
\bvolume{32}(\bissue{3}),
\bfpage{884}
(\byear{2019})
\end{barticle}
\endbibitem

\bibitem[\protect\citeauthoryear{Protter and Weinberger}{1984}]{protter2012maximum}
\begin{botherref}
\oauthor{\bsnm{Protter}, \binits{M.H.}},
\oauthor{\bsnm{Weinberger}, \binits{H.F.}}:
Maximum principles in differential equations.
Springer-Verlag
(1984)
\end{botherref}
\endbibitem

\bibitem[\protect\citeauthoryear{Pighin and Zuazua}{2018}]{dario}
\begin{botherref}
\oauthor{\bsnm{Pighin}, \binits{D.}},
\oauthor{\bsnm{Zuazua}, \binits{E.}}:
Controllability under positivity constraints of semilinear heat equations.
Mathematical Control and Related Fields
(2018)
\end{botherref}
\endbibitem

\bibitem[\protect\citeauthoryear{Robertson et~al.}{2017}]{robertson2017large}
\begin{barticle}
\bauthor{\bsnm{Robertson}, \binits{P.A.}},
\bauthor{\bsnm{Adriaens}, \binits{T.}},
\bauthor{\bsnm{Lambin}, \binits{X.}},
\bauthor{\bsnm{Mill}, \binits{A.}},
\bauthor{\bsnm{Roy}, \binits{S.}},
\bauthor{\bsnm{Shuttleworth}, \binits{C.M.}},
\bauthor{\bsnm{Sutton-Croft}, \binits{M.}}:
\batitle{The large-scale removal of mammalian invasive alien species in {N}orthern {E}urope}.
\bjtitle{Pest management science}
\bvolume{73}(\bissue{2}),
\bfpage{273}--\blpage{279}
(\byear{2017})
\end{barticle}
\endbibitem

\bibitem[\protect\citeauthoryear{Ruiz-Balet and Zuazua}{2020}]{domenec1}
\begin{barticle}
\bauthor{\bsnm{Ruiz-Balet}, \binits{D.}},
\bauthor{\bsnm{Zuazua}, \binits{E.}}:
\batitle{Control under constraints for multi-dimensional reaction-diffusion monostable and bistable equations}.
\bjtitle{Journal de Math{\'e}matiques Pures et Appliqu{\'e}es}
\bvolume{143},
\bfpage{345}--\blpage{375}
(\byear{2020})
\end{barticle}
\endbibitem

\bibitem[\protect\citeauthoryear{Sattinger}{1972}]{sattinger1972monotone}
\begin{barticle}
\bauthor{\bsnm{Sattinger}, \binits{D.H.}}:
\batitle{Monotone methods in nonlinear elliptic and parabolic boundary value problems}.
\bjtitle{Indiana University Mathematics Journal}
\bvolume{21}(\bissue{11}),
\bfpage{979}--\blpage{1000}
(\byear{1972})
\end{barticle}
\endbibitem

\bibitem[\protect\citeauthoryear{Shim}{2002}]{shim2002domains}
\begin{barticle}
\bauthor{\bsnm{Shim}, \binits{S.-A.}}:
\batitle{Domains of attraction of competition-diffusion systems}.
\bjtitle{Funkcialaj EKvacioj Serio Internacia}
\bvolume{45}(\bissue{1}),
\bfpage{103}--\blpage{122}
(\byear{2002})
\end{barticle}
\endbibitem

\bibitem[\protect\citeauthoryear{Shigesada et~al.}{1984}]{shigesada1984effects}
\begin{barticle}
\bauthor{\bsnm{Shigesada}, \binits{N.}},
\bauthor{\bsnm{Kawasaki}, \binits{K.}},
\bauthor{\bsnm{Teramoto}, \binits{E.}}:
\batitle{The effects of interference competition on stability, structure and invasion of a multi-species system}.
\bjtitle{Journal of Mathematical Biology}
\bvolume{21},
\bfpage{97}--\blpage{113}
(\byear{1984})
\end{barticle}
\endbibitem

\bibitem[\protect\citeauthoryear{Smillie}{1984}]{smillie1984competitive}
\begin{barticle}
\bauthor{\bsnm{Smillie}, \binits{J.}}:
\batitle{Competitive and cooperative tridiagonal systems of differential equations}.
\bjtitle{SIAM journal on mathematical analysis}
\bvolume{15}(\bissue{3}),
\bfpage{530}--\blpage{534}
(\byear{1984})
\end{barticle}
\endbibitem

\bibitem[\protect\citeauthoryear{Smith}{1988}]{smith1988systems}
\begin{barticle}
\bauthor{\bsnm{Smith}, \binits{H.L.}}:
\batitle{Systems of ordinary differential equations which generate an order preserving flow. {A} survey of results}.
\bjtitle{SIAM review}
\bvolume{30}(\bissue{1}),
\bfpage{87}--\blpage{113}
(\byear{1988})
\end{barticle}
\endbibitem

\bibitem[\protect\citeauthoryear{Smith}{2008}]{Smith2008MonotoneDS}
\begin{bbook}
\bauthor{\bsnm{Smith}, \binits{H.L.}}:
\bbtitle{Monotone Dynamical Systems: an Introduction to the Theory of Competitive and Cooperative Systems}
vol. \bseriesno{41}.
\bpublisher{American Mathematical Soc.}, \blocation{Providence}
(\byear{2008})
\end{bbook}
\endbibitem

\bibitem[\protect\citeauthoryear{Sonego and Zuazua}{2024}]{sonego2024control}
\begin{botherref}
\oauthor{\bsnm{Sonego}, \binits{M.}},
\oauthor{\bsnm{Zuazua}, \binits{E.}}:
Control of a {L}otka-{V}olterra system with weak competition.
arXiv preprint arXiv:2409.20279
(2024)
\end{botherref}
\endbibitem

\bibitem[\protect\citeauthoryear{Tang and Fife}{1980}]{tang1980propagating}
\begin{barticle}
\bauthor{\bsnm{Tang}, \binits{M.M.}},
\bauthor{\bsnm{Fife}, \binits{P.C.}}:
\batitle{Propagating fronts for competing species equations with diffusion}.
\bjtitle{Archive for Rational Mechanics and Analysis}
\bvolume{73}(\bissue{1}),
\bfpage{69}--\blpage{77}
(\byear{1980})
\end{barticle}
\endbibitem

\bibitem[\protect\citeauthoryear{Terracini et~al.}{2019}]{terracini2019spiraling}
\begin{barticle}
\bauthor{\bsnm{Terracini}, \binits{S.}},
\bauthor{\bsnm{Verzini}, \binits{G.}},
\bauthor{\bsnm{Zilio}, \binits{A.}}:
\batitle{Spiraling asymptotic profiles of competition-diffusion systems}.
\bjtitle{Communications on Pure and Applied Mathematics}
\bvolume{72}(\bissue{12}),
\bfpage{2578}--\blpage{2620}
(\byear{2019})
\end{barticle}
\endbibitem

\end{thebibliography}


\end{document}